\newcommand{\R}{\mathbb{R}}
\newcommand{\Z}{\mathbb{Z}}
\newcommand{\N}{\mathbb{N}}
\newcommand{\PV}{\operatorname{P.V.}}
\newcommand{\Dom}{\operatorname{Dom}}
\newcommand{\Id}{\operatorname{I}}
\newcommand{\dive}{\operatorname{div}}
\newcommand{\tr}{\operatorname{trace}}
\newcommand{\G}{\mathcal{G}}
\newtheorem{thm}{Theorem}[section]
\newtheorem{cor}[thm]{Corollary}
\newtheorem{lem}[thm]{Lemma}
\theoremstyle{definition}
\newtheorem{defn}[thm]{Definition}
\newtheorem{rem}[thm]{Remark}
\numberwithin{equation}{section}
\author[P. R. Stinga]{Pablo Ra\'ul Stinga}
\address{Department of Mathematics\\
Iowa State University\\
396 Carver Hall, Ames\\
IA 50011, USA}
\email{stinga@iastate.edu}
\author[J. L. Torrea]{Jos\'e L. Torrea}
\address{Departamento de Matem\'aticas\\
         Universidad Aut\'onoma de Madrid\\
         28049 Madrid, Spain}
\email{joseluis.torrea@uam.es}
\thanks{The authors were supported by grant MTM2015-66157-C2-1-P (MINECO/FEDER)
from Government of Spain.}
\keywords{Nonlocal space-time equation, master equation, fractional heat operator, parabolic language of semigroups,
extension problem, Harnack inequality, Almgren frequency formula, H\"older regularity}
\subjclass[2010]{Primary: 35R09, 35R11,58J35. Secondary: 26A33, 35B65, 47G20}
\begin{document}

\title[Fractional nonlocal parabolic equations]{Regularity theory and extension problem \\
for fractional nonlocal parabolic equations \\ and the master equation}

\begin{abstract}
We develop the regularity theory for solutions to space-time nonlocal equations
driven by fractional powers of the heat operator
$$(\partial_t-\Delta)^su(t,x)=f(t,x),\quad\hbox{for}~0<s<1.$$
This nonlocal equation of order $s$ in time and $2s$ in space arises
in Nonlinear Elasticity, Semipermeable Membranes, Continuous Time Random Walks
 {and Mathematical Biology}.
It plays for space-time nonlocal equations like the generalized
master equation the same role as the fractional
Laplacian for nonlocal in space equations.
We obtain a pointwise integro-differential formula for $(\partial_t-\Delta)^su(t,x)$
and parabolic maximum principles.
A novel extension problem to characterize this nonlocal equation 
with a local degenerate parabolic equation is proved.
We show parabolic interior and boundary Harnack inequalities,
 {and an Almgrem-type monotonicity formula}.
H\"older and Schauder estimates for the space-time Poisson problem
are deduced using a  new characterization of parabolic H\"older spaces.
Our methods involve the \textit{parabolic language of semigroups}
and the Cauchy Integral Theorem,
which are original to define the fractional powers of $\partial_t-\Delta$.
Though we mainly focus in the equation $(\partial_t-\Delta)^su=f$,
applications of our ideas to variable coefficients, discrete Laplacians
and Riemannian manifolds are stressed out.
\end{abstract}

\maketitle

\section{Introduction}

We develop the study of regularity and fine properties of solutions $u=u(t,x)$
to space-time nonlocal equations driven by the fractional powers of the heat operator  {$H=\partial_t-\Delta$},
\begin{equation}\label{ecuacion}
H^su\equiv(\partial_t-\Delta)^su=f,\quad\hbox{for}~0<s<1,~(t,x)\in\R^{n+1}.
\end{equation}

The space-time nonlocal problem $H^su=f$ arises from several applications in Nonlinear Elasticity,
Semipermeable Membranes, Continuous Time Random Walks (CTRW)
and Mathematical Biology,
just to mention a few. Indeed, it is not difficult to verify that the flat parabolic Signorini problem
(see the book by Duvaut and Lions \cite[Section~2.2.1]{Duvaut-Lions},
also \cite{Danielli-Garofalo-Petrosyan-To}) is equivalent to the obstacle problem for $(\partial_t-\Delta)^{1/2}$.
 {Similarly, the diffusion model for biological
invasions introduced in \cite{Berestycki} is equivalent to a local-nonlocal system coupling
a classical heat equation with an equation for $(\partial_t-\Delta)^{1/2}$.
The obstacle problem for $(\partial_t-\Delta)^s$, $0<s<1$, has been recently considered by
Athanasopoulos, Caffarelli and Milakis in \cite{ACM2}.}
Moreover, as we show in \eqref{eq:pointwise formula}, the equation \eqref{ecuacion} is a master equation.
The master equation is fundamental in the theory of CTRW,
see Metzler--Klafter \cite{Metzler-Klafter} and Caffarelli--Silvestre \cite{Caffarelli-Silvestre Master}.
It is worth noting that in \eqref{ecuacion} the random jumps are \textit{coupled} with the random waiting times.
This is in contrast with equations like $\partial_tu+(-\Delta)^su=f$ or
$D_t^\alpha u+(-\Delta)^su=f$, where jumps are \textit{independent} of the waiting times.
We finally mention that \eqref{ecuacion} is stated as equation (4.4) in \cite{Baeumer}.

The equation in \eqref{ecuacion} can be regarded as one of the most basic space-time nonlocal master equations
of order $s$ in time and $2s$ in space.
The fractional powers of the heat operator play
for these types of equations the parallel role as that of the fractional powers of the Laplacian in the general
theory of integro-differential equations.

In this paper the analysis of fine regularity properties is carried out.
Using the \textit{parabolic language of semigroups} we find the pointwise formula
for $H^su$ and its inverse $H^{-s}f$ (fundamental solution) and prove maximum principles.
We obtain a novel extension problem that characterizes this nonlocal operator through
a local degenerate evolution PDE.
The extension equation turns out to be a parabolic degenerate equation in one more variable.
Interior and boundary Harnack inequalities of parabolic type, and an Almgren-type monotonicity
formula, are deduced. We also prove fractional parabolic H\"older and Schauder estimates.

Apart from their immediate interest in applications (see 
\cite{ACM2}, \cite[eq.~(4.4)]{Baeumer}, \cite{Berestycki},
\cite{Caffarelli-Silvestre Master}, \cite{Danielli-Garofalo-Petrosyan-To}, \cite{Duvaut-Lions} and \cite{Metzler-Klafter}),
our novel ideas and results open the way to consider
anisotropic equations of the form $(\partial_t-L)^su=f$ on Riemannian manifolds, where $L$ is an elliptic
operator with bounded measurable coefficients that may depend on $t$ and $x$.
Observe that it is not clear at all how to define these nonlocal parabolic 
operators or how to obtain pointwise expressions for them.
We present here novel and crucial ideas that allow us to get the
new interesting results mentioned above.
In particular, we use tools such as the Cauchy Integral Theorem and analytic continuation
in combination with an original technique that we call \textit{parabolic language of semigroups}
and local parabolic PDE techniques.
Moreover, our method gives a  novel and very useful characterization of parabolic H\"older spaces.  {Some antecedents of the latter in the elliptic case can be found in the book by Stein \cite{Stein}.}

Let us begin by precisely defining the fractional powers of the heat operator.
For $0<s<1$, the fractional heat operator $H^su(t,x)$
of a function $u=u(t,x):\R^{n+1}\to\R$, $n\geq1$, is given as
$$\widehat{H^su}(\rho,\xi)=(i\rho+|\xi|^2)^s\widehat{u}(\rho,\xi),$$
for $\rho\in\R$ and $\xi\in\R^n$. Observe that
the Fourier multiplier of the heat operator
$H$ is the complex number $i\rho+|\xi|^2$.
Hence a first goal is to give a correct meaning to the $s$-power of such a complex number.
This can be solved by performing the analytic continuation of the function $z\mapsto z^s$, to $\Re(z)>0$.
Even though this consideration is correct as a definition, we still have the problem of finding a
pointwise formula for $H^su$ at any space-time point $(t,x)$. One could try to compute the inverse
Fourier transform in the definition above.
This methodology, though plausible, would require quite involved and nontrivial calculations.
But still, such approach will be completely useless to handle other equations
like $(\partial_t-L)^su=f$ when $L$ is, for example, non translation invariant.
We overcome these difficulties by introducing a novel original idea: the parabolic language of semigroups.
One of the main advantages of this new method, among many others, is that it allows us to 
avoid the computation of the inverse Fourier transform.  {We shall write $A\sim B$ 
as a shorthand notation to denote the existence of two constants
$c_1,c_2>0$ such that $c_1A\le B\le c_2A$.}

\begin{thm}[Pointwise formula]\label{thm:puntual}
Let $u$ be a function
in the parabolic H\"older space $C^{s+\varepsilon,2s+\varepsilon}_{t,x}(\R^{n+1})$,
for some $\varepsilon>0$ (see Section \ref{Section:puntual} for definitions).
Then, for every $(t,x)\in\R^{n+1}$,
\begin{equation}\label{eq:pointwise formula}
H^su(t,x)=\int_0^\infty\int_{\R^n}\big(u(t,x)-u(t-\tau,x-z)\big)K_s(\tau,z)\,dz\,d\tau,
\end{equation}
where
$$K_s(\tau,z)=\frac{1}{(4\pi)^{n/2}|\Gamma(-s)|}\cdot\frac{e^{-|z|^2/(4\tau)}}{\tau^{n/2+1+s}},$$
for $\tau>0$, $z\in\R^n$. Here $\Gamma$ denotes the Gamma function.
Moreover, for some $0<\lambda\leq\Lambda$,
\begin{enumerate}[$(1)$]
\item $\displaystyle K_s(\tau,z)\geq\frac{\lambda}{|z|^{n+2+2s}}$, when $\tau\sim|z|^2$,
\item $\displaystyle K_s(\tau,z)\leq\frac{\Lambda}{|z|^{n+2+2s}+\tau^{(n+2+2s)/2}}$, for every $z\neq0$.
\end{enumerate}
\end{thm}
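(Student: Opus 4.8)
The plan is to derive the pointwise formula \eqref{eq:pointwise formula} from the semigroup (Balakrishnan) representation of fractional powers, taking advantage of the fact that $H=\partial_t-\Delta$ generates the semigroup obtained by combining a backward time shift with convolution against the Gauss--Weierstrass kernel. The starting point is the numerical identity
\begin{equation*}
\lambda^s=\frac{1}{|\Gamma(-s)|}\int_0^\infty\big(1-e^{-\tau\lambda}\big)\,\frac{d\tau}{\tau^{1+s}},
\end{equation*}
which holds first for $\lambda>0$ --- integrate by parts, recognize the Euler integral $\int_0^\infty\tau^{-s}e^{-\tau\lambda}\,d\tau=\lambda^{s-1}\Gamma(1-s)$, and use $\Gamma(1-s)=-s\,\Gamma(-s)$ --- and then for every $\lambda\in\C$ with $\Re(\lambda)>0$. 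This extension is exactly where the Cauchy Integral Theorem enters: both sides are holomorphic on the right half-plane (the integral converges locally uniformly there), they agree on $(0,\infty)$, hence they agree on $\set{\Re(\lambda)>0}$; equivalently, one rotates the contour into the half-plane. The boundary case $\Re(\lambda)=0$, $\lambda\neq0$ (needed below), follows by taking the limiting value of the holomorphic branch of $z\mapsto z^s$ on the imaginary axis, the integral still converging there since $|1-e^{-\tau\lambda}|\lesssim\min\{\tau|\lambda|,1\}$.

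Next I would identify the semigroup. The Fourier multiplier of $e^{-\tau H}$ is $e^{-\tau(i\rho+|\xi|^2)}=e^{-i\tau\rho}\,e^{-\tau|\xi|^2}$, so $e^{-\tau H}$ shifts time backward by $\tau$ and convolves in $x$ with the heat kernel $W_\tau(z)=(4\pi\tau)^{-n/2}e^{-|z|^2/(4\tau)}$:
\begin{equation*}
\big(e^{-\tau H}u\big)(t,x)=\int_{\R^n}W_\tau(z)\,u(t-\tau,x-z)\,dz.
\end{equation*}
Applying the numerical identity with $\lambda=i\rho+|\xi|^2$ on the Fourier side (never computing an inverse Fourier transform, only recognizing the structure of the multiplier), or equivalently using $H^s=\frac{1}{|\Gamma(-s)|}\int_0^\infty(\I-e^{-\tau H})\,\tau^{-1-s}\,d\tau$, gives
\begin{equation*}
H^su(t,x)=\frac{1}{|\Gamma(-s)|}\int_0^\infty\Big(u(t,x)-\int_{\R^n}W_\tau(z)\,u(t-\tau,x-z)\,dz\Big)\frac{d\tau}{\tau^{1+s}}.
\end{equation*}
Since $\int_{\R^n}W_\tau(z)\,dz=1$, replacing $u(t,x)$ by $\int_{\R^n}W_\tau(z)\,u(t,x)\,dz$ and absorbing the factor $|\Gamma(-s)|^{-1}\tau^{-1-s}$ into the kernel yields \eqref{eq:pointwise formula} with $K_s(\tau,z)=|\Gamma(-s)|^{-1}\tau^{-1-s}W_\tau(z)=\frac{1}{(4\pi)^{n/2}|\Gamma(-s)|}\cdot\frac{e^{-|z|^2/(4\tau)}}{\tau^{n/2+1+s}}$, as claimed.

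The one genuine technical point --- and the main obstacle --- is the justification of these manipulations, i.e.\ the absolute convergence of the resulting double integral, and this is where the hypothesis $u\in C^{s+\varepsilon,2s+\varepsilon}_{t,x}(\R^{n+1})$ is used. Splitting $\int_0^\infty=\int_0^1+\int_1^\infty$: on $\tau\geq1$ the Gaussian integrates to $1$ and $u$ is bounded, so the tail is dominated by $\norm{u}_\infty\int_1^\infty\tau^{-1-s}\,d\tau<\infty$; on $0<\tau<1$ one bounds $|u(t,x)-u(t-\tau,x-z)|\leq C(\tau^{s+\varepsilon/2}+|z|^{2s+\varepsilon})$ from the parabolic H\"older seminorms of $u$ (see Section~\ref{Section:puntual}) and uses the Gaussian moment $\int_{\R^n}|z|^{2s+\varepsilon}W_\tau(z)\,dz=c\,\tau^{s+\varepsilon/2}$ to obtain
\begin{equation*}
\int_0^1\int_{\R^n}\big(\tau^{s+\varepsilon/2}+|z|^{2s+\varepsilon}\big)\frac{W_\tau(z)}{\tau^{1+s}}\,dz\,d\tau\lesssim\int_0^1\tau^{-1+\varepsilon/2}\,d\tau<\infty.
\end{equation*}
The same bound licenses Fubini, the passage to the Fourier side, and the interchange of limits; everything else is formal.

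Finally, the two kernel bounds are elementary given the explicit $K_s$. For $(2)$, note $n/2+1+s=(n+2+2s)/2$, so $\tau^{(n+2+2s)/2}K_s(\tau,z)=\frac{e^{-|z|^2/(4\tau)}}{(4\pi)^{n/2}|\Gamma(-s)|}\leq\frac{1}{(4\pi)^{n/2}|\Gamma(-s)|}$, while the substitution $r=|z|^2/(4\tau)$ gives $|z|^{n+2+2s}K_s(\tau,z)=\frac{(4r)^{n/2+1+s}e^{-r}}{(4\pi)^{n/2}|\Gamma(-s)|}$, bounded in $r>0$ because $r^ae^{-r}$ is bounded for $a=n/2+1+s>0$; adding these estimates gives $\big(|z|^{n+2+2s}+\tau^{(n+2+2s)/2}\big)K_s(\tau,z)\leq\Lambda$. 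For $(1)$, if $c_1|z|^2\leq\tau\leq c_2|z|^2$ then $e^{-|z|^2/(4\tau)}\geq e^{-1/(4c_1)}>0$ and $\tau^{n/2+1+s}\leq c_2^{n/2+1+s}|z|^{n+2+2s}$, whence $K_s(\tau,z)\geq\lambda\,|z|^{-(n+2+2s)}$ with $\lambda=e^{-1/(4c_1)}\big((4\pi)^{n/2}|\Gamma(-s)|\,c_2^{n/2+1+s}\big)^{-1}$.
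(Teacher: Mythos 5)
Your proposal is correct and follows essentially the same route as the paper: the Balakrishnan-type numerical identity extended to $\Re(\lambda)>0$ via analytic continuation and the Cauchy Integral Theorem, the identification of $e^{-\tau H}$ as a backward time shift composed with the Gauss--Weierstrass convolution, the use of $\int_{\R^n}W_\tau(z)\,dz=1$ to pass to the pointwise kernel form, and the elementary Gaussian bounds for $(1)$ and $(2)$. You spell out the absolute-convergence estimate and the boundary case $\Re(\lambda)=0$ in more detail than the paper, which simply asserts these points, but the structure of the argument is the same.
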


Our pointwise formula in \eqref{eq:pointwise formula} shows that
$H^su(t,x)=f(t,x)$ is indeed a
master equation like the ones appearing in \cite{Caffarelli-Silvestre Master, Metzler-Klafter}.
It also provides the explicit formula for the equation in \cite[eq.~(4.4)]{Baeumer}.
In particular, estimates (1) and (2) for the kernel $K_s(\tau,z)$ above
are the same that Caffarelli and Silvestre assume in \cite{Caffarelli-Silvestre Master}
for $\sigma=2s$ and $\beta=2$. Therefore, we regard the equation $H^su=f$ as 
the most basic space-time nonlocal master equation.

\begin{rem}[Dirichlet problem -- Boundary condition]
Notice from \eqref{eq:pointwise formula} that in order to compute $H^su(t,x)$ we need to know
the values of $u(\tau,z)$ for every $\tau\leq t$ (the past) and every $z\in\R^n$.
Then the natural Dirichlet problem for $H^s$ takes the form
\begin{equation}\label{eq:Dirichlet problem}
\begin{cases}
H^su=f,&\hbox{in}~(T_0,T_1]\times\Omega,\\
u=g,&\hbox{in}~\big[(-\infty,T_0]\times\R^n\big]\cup\big[(T_0,T_1)\times(\R^n\setminus\Omega)\big],
\end{cases}
\end{equation}
where $\Omega$ is a bounded domain in $\R^n$, $T_0<T_1$ are real numbers and $f(t,x)$ and
$g(t,x)$ are prescribed functions in their respective domains.
\end{rem}
 
\begin{rem}[Scaling]
It follows from \eqref{eq:pointwise formula} that if $u_\lambda(t,x)=u(\lambda^2t,\lambda x)$, $\lambda>0$, then
we have the fractional parabolic scaling
\begin{equation}\label{eq:scaling}
(H^su_\lambda)(t,x)=\lambda^{2s}(H^su)(\lambda^2t,\lambda x).
\end{equation}
\end{rem}

If a function $u$ does not depend on $t$ then $Hu=-\Delta u$, while if it does not depend on $x$ then
$Hu=\partial_tu$. In a similar way we obtain

\begin{cor}[Separated variables cases]\label{cor:separate}
Let $u\in C^{s+\varepsilon,2s+\varepsilon}_{t,x}(\R^{n+1})$,
for some $\varepsilon>0$.
If $u$ does not depend on $t$ then
$$H^su(t,x)=(-\Delta)^su(x),\quad x\in\R^n,$$
the fractional Laplacian on $\R^n$. If $u$ does not depend on $x$ then
$$H^su(t,x)=(\partial_t)^su(t)=\frac{1}{|\Gamma(-s)|}\int_{-\infty}^t\frac{u(t)-u(\tau)}{(t-\tau)^{1+s}}\,d\tau,
\quad t\in\R,$$
the Marchaud fractional derivative of order $s$,  {see \cite{Marchaud}}.
\end{cor}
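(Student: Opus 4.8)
The plan is to obtain both identities by specializing the pointwise formula \eqref{eq:pointwise formula} from Theorem~\ref{thm:puntual} and collapsing one of the two integrations.

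First I would treat the case in which $u$ does not depend on $t$, so that $u(t-\tau,x-z)=u(x-z)$ in \eqref{eq:pointwise formula}. The hypothesis $u\in C^{s+\varepsilon,2s+\varepsilon}_{t,x}$ gives $|u(x)-u(x-z)|\le C|z|^{2s+\varepsilon}$ for $|z|\le1$, which combined with estimate~$(2)$ for $K_s$ makes the double integral absolutely convergent; Fubini then allows us to integrate in $\tau$ first. The inner integral
\[
\int_0^\infty\frac{e^{-|z|^2/(4\tau)}}{\tau^{n/2+1+s}}\,d\tau
\]
is evaluated via the substitution $r=|z|^2/(4\tau)$, giving $\dfrac{4^{n/2+s}\,\Gamma(n/2+s)}{|z|^{n+2s}}$. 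Plugging this back, the constant in front of the remaining $z$-integral becomes
\[
\frac{4^{n/2+s}\,\Gamma(n/2+s)}{(4\pi)^{n/2}\,|\Gamma(-s)|}=\frac{4^s\,\Gamma(n/2+s)}{\pi^{n/2}\,|\Gamma(-s)|},
\]
which is exactly the normalizing constant of the fractional Laplacian $(-\Delta)^s$ on $\R^n$; hence $H^su(t,x)=(-\Delta)^su(x)$. No principal value is needed here, since the integrand is already absolutely integrable.

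Next I would treat the case in which $u$ does not depend on $x$, so that $u(t-\tau,x-z)=u(t-\tau)$. This time I would collapse the $z$-integral using $\int_{\R^n}e^{-|z|^2/(4\tau)}\,dz=(4\pi\tau)^{n/2}$, which cancels the factor $(4\pi)^{n/2}$ and lowers the power of $\tau$ in the denominator to $\tau^{1+s}$, leaving
\[
H^su(t)=\frac{1}{|\Gamma(-s)|}\int_0^\infty\frac{u(t)-u(t-\tau)}{\tau^{1+s}}\,d\tau.
\]
The change of variables $\tau\mapsto t-\tau$ then turns this into the Marchaud fractional derivative $(\partial_t)^su(t)$ as displayed in the statement.

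The only step requiring genuine care is the interchange of the order of integration before performing these one-variable computations; this is licensed by the same absolute-integrability estimate used in the proof of Theorem~\ref{thm:puntual}, and it is in fact easier here because the separated structure removes the mixed parabolic modulus of continuity. Identifying the resulting constant with the classical $c_{n,s}$ is a routine check using $\Gamma(-s)=-\Gamma(1-s)/s$.
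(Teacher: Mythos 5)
Your approach is the same as the paper's: specialize the pointwise formula from Theorem~\ref{thm:puntual}, apply Fubini, and collapse one of the two integrations (the $\tau$-integral via $r=|z|^2/(4\tau)$ in the spatial case, the Gaussian $z$-integral in the temporal case). The Gamma-function bookkeeping and the final constant $\frac{4^s\Gamma(n/2+s)}{\pi^{n/2}|\Gamma(-s)|}$ are correct.

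There is, however, one genuine flaw. You assert that in the time-independent case ``no principal value is needed here, since the integrand is already absolutely integrable,'' on the strength of $|u(x)-u(x-z)|\le C|z|^{2s+\varepsilon}$. This bound, and with it the absolute convergence, only holds when $2s+\varepsilon\le 1$, i.e.\ for $s<1/2$ (and small $\varepsilon$). When $s\ge 1/2$, the membership $u\in C^{2s+\varepsilon}_x$ means $\nabla_x u\in C^{2s+\varepsilon-1}_x$, so the raw increment is only Lipschitz, $|u(x)-u(x-z)|\le C|z|$, and after collapsing in $\tau$ the integrand behaves like $|z|^{1-n-2s}$ near the origin, which is not absolutely integrable (the radial integral $\int_0 r^{-2s}\,dr$ diverges for $s\ge 1/2$). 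The paper handles this correctly by retaining the principal value $\PV$ in the spatial case: the kernel $|z|^{-(n+2s)}$ is even, so the odd first-order term cancels under symmetrization, and the remaining second-order increment $|u(x-z)+u(x+z)-2u(x)|\le C|z|^{1+(2s+\varepsilon-1)}=C|z|^{2s+\varepsilon}$ yields convergence. Strictly speaking, this is also the cancellation implicit in the proof of Theorem~\ref{thm:puntual} that makes the double integral well-defined in the first place; dropping the $\PV$ after Fubini is not justified for $s\ge 1/2$. Your temporal case is fine as written, since the Gaussian $z$-integral collapses cleanly and the one-sided Marchaud kernel is absolutely integrable against $|u(t)-u(t-\tau)|\le C\tau^{s+\varepsilon}$ without any cancellation.
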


It is worth noticing that the appearance of the Marchaud fractional derivative
\textit{from the past} is related to the fact that the heat equation is not time reversible,
see also the detailed analysis in \cite{bernardis}.

The parabolic nonlocal maximum principle takes the following form.

\begin{thm}[Maximum principle]\label{thm:maximum}
Let $u\in C^{s+\varepsilon,2s+\varepsilon}_{t,x}(\R^{n+1})$,
for some $\varepsilon>0$. Suppose that
\begin{enumerate}[$(1)$]
\item $u(t_0,x_0)=0$ for some $(t_0,x_0)\in\R^{n+1}$, and
\item $u(t,x) \ge 0$ for $t\le t_0$, $x\in\mathbb{R}^n$.
\end{enumerate}
Then
$$H^su(t_0,x_0)\le 0.$$
Moreover, $H^su(t_0,x_0)=0$ if and only if $u(t,x)=0$ for $t\le t_0$ and $x\in \mathbb{R}^n$.
\end{thm}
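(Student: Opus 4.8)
The plan is to read everything off the pointwise integro-differential representation in Theorem \ref{thm:puntual}. Since $u$ belongs to $C^{s+\varepsilon,2s+\varepsilon}_{t,x}(\R^{n+1})$, that theorem applies at the point $(t_0,x_0)$ and gives
\begin{equation*}
H^su(t_0,x_0)=\int_0^\infty\int_{\R^n}\big(u(t_0,x_0)-u(t_0-\tau,x_0-z)\big)K_s(\tau,z)\,dz\,d\tau,
\end{equation*}
with the integral absolutely convergent. First I would substitute hypothesis $(1)$, namely $u(t_0,x_0)=0$, so that the integrand reduces to $-u(t_0-\tau,x_0-z)K_s(\tau,z)$. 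For every $\tau>0$ one has $t_0-\tau<t_0$, hence by hypothesis $(2)$ the factor $u(t_0-\tau,x_0-z)$ is $\ge 0$; since $K_s(\tau,z)>0$ for all $\tau>0$ and $z\in\R^n$ by its explicit Gaussian formula, the integrand is pointwise $\le 0$. Integrating yields $H^su(t_0,x_0)\le 0$, which is the first assertion. Notice that the representation only sees the values of $u$ at times $\le t_0$, so hypothesis $(2)$ on the past half-space is exactly what is needed.

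For the equality statement, the ``if'' direction is immediate: if $u(t,x)=0$ for all $t\le t_0$ and $x\in\R^n$, then the integrand in the formula vanishes identically and therefore $H^su(t_0,x_0)=0$. For the ``only if'' direction, suppose $H^su(t_0,x_0)=0$. Then the integral of the nonnegative function $(\tau,z)\mapsto u(t_0-\tau,x_0-z)K_s(\tau,z)$ over $(0,\infty)\times\R^n$ equals zero, so this function vanishes for a.e.\ $(\tau,z)$. Since $K_s(\tau,z)>0$ everywhere, we get $u(t_0-\tau,x_0-z)=0$ for a.e.\ $(\tau,z)\in(0,\infty)\times\R^n$, i.e.\ $u(t,x)=0$ for a.e.\ $(t,x)$ with $t<t_0$. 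Finally I would invoke the continuity of $u$ (it is parabolic H\"older continuous) to upgrade this to $u(t,x)=0$ for every $t<t_0$ and $x\in\R^n$, and then let $t\to t_0^-$ to conclude $u(t,x)=0$ on the whole closed past half-space $\set{t\le t_0}$.

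There is essentially no deep obstacle here: the argument is a sign analysis of the pointwise formula. The only points requiring care are $(i)$ checking that the hypothesis $u\in C^{s+\varepsilon,2s+\varepsilon}_{t,x}(\R^{n+1})$ is precisely what licenses the use of the representation in Theorem \ref{thm:puntual}, and $(ii)$ in the equality case, passing from ``vanishing almost everywhere'' to genuine pointwise vanishing on all of $\set{t\le t_0}$, which relies on the continuity of $u$ together with the strict positivity of $K_s$ on the entire region $(0,\infty)\times\R^n$, visible from its explicit form.
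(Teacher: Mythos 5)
Your argument is exactly the one the paper uses (substitute the pointwise formula from Theorem \ref{thm:puntual}, observe the integrand is $-u(t_0-\tau,x_0-z)K_s(\tau,z)\le 0$ by the hypotheses and the strict positivity of the Gaussian kernel, and read off both assertions); the paper's proof is just a terse version of this. Your added care about passing from a.e.\ vanishing to pointwise vanishing via continuity is a correct fleshing-out of the step the paper leaves implicit.
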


As a consequence, we have a comparison principle and 
the uniqueness of solutions to the Dirichlet problem \eqref{eq:Dirichlet problem}.

\begin{cor}[Comparison principle -- Uniqueness]\label{cor:comparison}
Let $\Omega$ be a bounded domain of $\R^n$ and let $T_0<T_1$
be two real numbers.  {Suppose that $u,w\in C^{s+\varepsilon,2s+\varepsilon}_{t,x}(\R^{n+1})$,
for some $\varepsilon>0$.} If
$$\begin{cases}
H^s u\geq H^sw,&\hbox{in}~(T_0,T_1]\times\Omega,\\
u\geq w,&\hbox{in}~\big[(-\infty,T_0]\times\R^n\big]\cup\big[(T_0,T_1)\times(\R^n\setminus\Omega)\big],
\end{cases}$$
then $u \geq w$ in $(-\infty,T_1]\times\R^n$. In particular, uniqueness for the Dirichlet problem
\eqref{eq:Dirichlet problem} holds.
\end{cor}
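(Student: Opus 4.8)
The plan is to reduce the comparison principle to the Maximum Principle of Theorem~\ref{thm:maximum}, applied to the difference $v:=u-w$ (after translating by a constant). First I would note that $v\in C^{s+\varepsilon,2s+\varepsilon}_{t,x}(\R^{n+1})$, that $H^sv=H^su-H^sw\geq0$ in $(T_0,T_1]\times\Omega$, and that $v\geq0$ on $\big[(-\infty,T_0]\times\R^n\big]\cup\big[(T_0,T_1)\times(\R^n\setminus\Omega)\big]$. Since $v$ is continuous, this sign information upgrades: $v\geq0$ everywhere on $(-\infty,T_1]\times\R^n$ except possibly on the set $Q:=(T_0,T_1]\times\Omega$; in particular $v\geq0$ on $[T_0,T_1]\times\partial\Omega$ and on $\{T_1\}\times(\R^n\setminus\Omega)$, the latter obtained by letting $t\to T_1^-$.

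Next I would argue by contradiction. Suppose $m:=\inf_{(-\infty,T_1]\times\R^n}v<0$. Because $v\geq0$ off $Q$, we have $m=\inf_Q v$, and since $Q\subset K:=[T_0,T_1]\times\overline\Omega$ with $K$ compact (here the boundedness of $\Omega$ is used) and $v$ continuous, the value $m$ is attained at some point $(t_0,x_0)\in K$. As $v(t_0,x_0)=m<0$ while $v\geq0$ on $K\setminus Q$, necessarily $t_0\in(T_0,T_1]$ and $x_0\in\Omega$, that is $(t_0,x_0)\in Q$.

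Now put $\tilde v:=v-m$, which still belongs to $C^{s+\varepsilon,2s+\varepsilon}_{t,x}(\R^{n+1})$. Then $\tilde v(t_0,x_0)=0$ and $\tilde v(t,x)\geq0$ for every $t\leq t_0$ and $x\in\R^n$, since such points lie in $(-\infty,T_1]\times\R^n$ where $v\geq m$. Theorem~\ref{thm:maximum} then gives $H^s\tilde v(t_0,x_0)\leq0$. On the other hand, the constant $m$ cancels inside the integrand of the pointwise formula~\eqref{eq:pointwise formula}, so $H^s\tilde v(t_0,x_0)=H^sv(t_0,x_0)\geq0$ because $(t_0,x_0)\in Q$ and $H^sv\geq0$ there. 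Hence $H^s\tilde v(t_0,x_0)=0$, and the equality case of Theorem~\ref{thm:maximum} forces $\tilde v(t,x)=0$ for all $t\leq t_0$, $x\in\R^n$. Evaluating at $t=T_0<t_0$ gives $v(T_0,x)=m<0$ for all $x\in\R^n$, contradicting $v(T_0,\cdot)\geq0$. Therefore $m\geq0$, that is $u\geq w$ on $(-\infty,T_1]\times\R^n$. Uniqueness for \eqref{eq:Dirichlet problem} follows at once: if $u$ and $w$ both solve it with the same $f$ and $g$, then $H^su=f=H^sw$ and $u=g=w$ on the complement of the cylinder, so the comparison principle applies with the roles of $u$ and $w$ interchanged, giving $u\equiv w$.

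The step needing the most care — rather than a deep difficulty — is the localization that guarantees $m$ is attained, and attained at a point of $Q$ with $t_0\in(T_0,T_1]$, $x_0\in\Omega$: this relies on the continuity of $v$ together with careful bookkeeping of the prescribed sign of $v$ on every face of $\partial Q$, including the closed-time faces $\{T_0\}\times\overline\Omega$ and $\{T_1\}\times\partial\Omega$, the latter handled by a limiting argument. One should also record the harmless facts that a constant lies in $C^{s+\varepsilon,2s+\varepsilon}_{t,x}(\R^{n+1})$ and cancels in \eqref{eq:pointwise formula}, so that Theorem~\ref{thm:maximum} legitimately applies to $\tilde v$.
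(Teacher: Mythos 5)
Your proof is correct and follows essentially the same route as the paper: the paper derives the corollary by applying Lemma~\ref{lem:premax}(3) to $u-w$, and your argument is an inlined re-derivation of that lemma, with the constant shift and the boundary-face bookkeeping made explicit. Both rest on locating an extremum in the compact set $[T_0,T_1]\times\overline\Omega$ and then invoking the sign and equality cases of Theorem~\ref{thm:maximum}.
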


We characterize the fractional heat operator, which is a \textit{nonlocal} operator,
with a \textit{local} extension problem in one dimension more.
The extension equation we obtain is 
a parabolic degenerate equation with Muckenhoupt $A_2$-weight.
The new variable $y$ is a spatial variable for the new equation.
For the case of the fractional powers of the Laplacian an \textit{elliptic} extension problem
is available \cite{Caffarelli-Silvestre CPDE, Stinga-Torrea CPDE},
see \cite{GMS} where the most general extension problem
that can be found in the literature was proved,
namely, for fractional powers of operators in Banach spaces.
However we stress that, for the fractional heat operator, our new extension problem is \textit{parabolic}.
The extension problem for the Marchaud fractional derivative has been originally obtained in \cite{bernardis},
where the extension PDE is also of parabolic type.
For the definition of $e^{-\tau H}u(t,x)$ see Section \ref{Section:puntual}.

\begin{thm}[Extension problem]\label{thm:extension}
Let $0<s<1$ and take
$u\in\Dom(H^s)=\big\{u\in L^2(\R^{n+1}):(i\rho+|\xi|^2)^s\widehat{u}(\rho,\xi)\in L^2(\R^{n+1})\big\}$.
For $(t,x)\in\R^{n+1}$ and $y>0$ define
\begin{equation}\label{eq:U}
\begin{aligned}
U(t,x,y) &= \frac{y^{2s}}{4^s\Gamma(s)}
\int_0^\infty e^{-y^2/(4\tau)}e^{-\tau H}u(t,x)\,\frac{d\tau}{\tau^{1+s}} \\
&= \frac{1}{\Gamma(s)}\int_0^\infty e^{-r}e^{-\frac{y^2}{4r}H}u(t,x)\,\frac{dr}{r^{1-s}} \\
&= \frac{1}{\Gamma(s)}\int_0^\infty
e^{-y^2/(4r)}e^{-rH}\big(H^su\big)(t,x)\,\frac{dr}{r^{1-s}}.
\end{aligned}
\end{equation}
 {Then $U(\cdot,\cdot,y)\in C^\infty((0,\infty);\Dom(H))\cap C([0,\infty);L^2(\R^{n+1}))$ solves}
\begin{equation}\label{eq:extension}
\begin{cases}
\partial_tU=\Delta U+\frac{1-2s}{y}U_y+U_{yy},&\hbox{for}~(t,x)\in\R^{n+1},~y>0,\\
\displaystyle\lim_{y\to0^+}U(t,x,y)=u(t,x),&\hbox{in}~L^2(\mathbb{R}^{n+1}),
\end{cases}
\end{equation}
and
\begin{equation}\label{eq:Neumann}
-\lim_{y\to0^+}\frac{U(t,x,y)-U(t,x,0)}{y^{2s}}=
\frac{|\Gamma(-s)|}{4^s\Gamma(s)} H^su(t,x)= -\frac1{2s} \lim_{y\to0^+}y^{1-2s}U_y(t,x,y),
\end{equation}
 {in $L^2(\mathbb{R}^{n+1})$}.
In addition, for $(t,x)\in\R^{n+1}$, $y>0$, we can also write the Poisson formula
\begin{equation}\label{eq:Poisson formula}
U(t,x,y)=\int_0^\infty\int_{\R^n} P_y^{s}(\tau,z)u(t-\tau,x-z)\,dz\,d\tau,
\end{equation}
where the fractional Poisson kernel $P_y^s(\tau,z)$ is defined for $\tau>0$, $z\in\R^n$ and $y>0$ by
\begin{equation}\label{eq:PoissonKernel}
P_y^s(\tau,z)=\frac{1}{4^{n/2+s}\pi^{n/2}\Gamma(s)}\cdot\frac{y^{2s}}{\tau^{n/2+1+s}}\,e^{-(y^2+|z|^2)/(4\tau)}.
\end{equation}
\end{thm}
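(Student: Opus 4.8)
The strategy is to reduce everything to the scalar heat semigroup $\{e^{-\tau H}\}_{\tau>0}$ acting on $L^2(\R^{n+1})$, whose Fourier multiplier is $e^{-\tau(i\rho+|\xi|^2)}$, and then verify the extension PDE and the Neumann condition by a direct computation on the Fourier side. First I would record the three equivalent expressions for $U$ in \eqref{eq:U}: the first two are related by the change of variables $r=y^2/(4\tau)$, and the passage from the second to the third uses the subordination/semigroup identity $e^{-\frac{y^2}{4r}H}u = e^{-\frac{y^2}{4r}H}H^{-s}(H^su)$ together with the formula $H^{-s} = \frac{1}{\Gamma(s)}\int_0^\infty e^{-rH}(\cdot)\,\frac{dr}{r^{1-s}}$ (valid on $\Dom(H^s)$ since $\Re(i\rho+|\xi|^2)>0$ a.e.), after exchanging the order of the two $r$-integrations by Fubini. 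On the Fourier side, writing $m=i\rho+|\xi|^2$, one has
\begin{equation*}
\widehat{U}(\rho,\xi,y)=\varphi(y^2 m),\qquad
\varphi(w):=\frac{1}{\Gamma(s)}\int_0^\infty e^{-r}e^{-w/(4r)}\,\frac{dr}{r^{1-s}},
\end{equation*}
which is (a constant multiple of) $w^{s/2}K_s(\sqrt{w})$, the modified Bessel function; in particular $\varphi$ is smooth on $\Re(w)>0$, bounded with $\varphi(0)=1$, and this already gives $U(\cdot,\cdot,y)\in C^\infty((0,\infty);\Dom(H))\cap C([0,\infty);L^2)$ and the Dirichlet condition $U(\cdot,\cdot,y)\to u$ in $L^2$ as $y\to0^+$, by dominated convergence in the multiplier.

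Next I would check \eqref{eq:extension}. Applying $\widehat{\ \cdot\ }$ to the right-hand side degenerate operator turns $\partial_t-\Delta$ into $m$ and the Bessel operator $\frac{1-2s}{y}\partial_y+\partial_{yy}$ into an ODE in $y$; the equation $\partial_tU=\Delta U+\frac{1-2s}{y}U_y+U_{yy}$ becomes, with $w=y^2m$, exactly the modified Bessel equation $w\varphi''(w)+(1-s)\varphi'(w)-\tfrac14\varphi(w)=0$ (up to the standard normalization), which $\varphi$ satisfies by construction. Alternatively, and more in the spirit of the paper, one differentiates the first integral representation of $U$ under the integral sign: $\partial_t U$ and $\Delta U$ are produced by letting $-H$ hit $e^{-\tau H}u$ inside the integral and then integrating by parts in $\tau$, which is the usual computation (as in \cite{Stinga-Torrea CPDE}) showing that $g(y):=\frac{y^{2s}}{4^s\Gamma(s)}\int_0^\infty e^{-y^2/(4\tau)}\,\frac{d\tau}{\tau^{1+s}}$ is the fundamental pair for the Bessel operator; the semigroup property $\partial_\tau e^{-\tau H}u=-H e^{-\tau H}u$ does the rest. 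For the Neumann data \eqref{eq:Neumann}, I would use the third representation of $U$: then $U(t,x,y)-u(t,x)=\frac{1}{\Gamma(s)}\int_0^\infty e^{-y^2/(4r)}\big(e^{-rH}H^su-H^su\big)\cdot(\ldots)$ plus a boundary term, and a short computation with $\int_0^\infty(e^{-y^2/(4r)}-1)\,\frac{dr}{r^{1+s}}=-\frac{4^s\Gamma(s)}{y^{2s}}\cdot\frac{|\Gamma(-s)|}{\Gamma(s)}$ (this is where the constant $\frac{|\Gamma(-s)|}{4^s\Gamma(s)}$ comes from) gives the first equality; the second equality, $\frac{|\Gamma(-s)|}{4^s\Gamma(s)}H^su=-\frac1{2s}\lim_{y\to0^+}y^{1-2s}U_y$, follows either from differentiating the Bessel expansion $\varphi(w)=1+c_s\,w^s+O(w)$ near $w=0$, or from the well-known identity $\lim_{y\to0^+}y^{1-2s}U_y=-2s\lim_{y\to0^+}\frac{U(t,x,y)-U(t,x,0)}{y^{2s}}$ for solutions of the Bessel equation.

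Finally, for the Poisson formula \eqref{eq:Poisson formula}--\eqref{eq:PoissonKernel}, I would insert into the first line of \eqref{eq:U} the explicit kernel of $e^{-\tau H}$, namely $e^{-\tau H}u(t,x)=\int_{\R^n}W_\tau(z)u(t-\tau,x-z)\,dz$ with $W_\tau(z)=(4\pi\tau)^{-n/2}e^{-|z|^2/(4\tau)}$ (the causal heat kernel, consistent with Corollary \ref{cor:separate} and Theorem \ref{thm:puntual}), then apply Fubini to obtain $P_y^s(\tau,z)=\frac{y^{2s}}{4^s\Gamma(s)}\,W_\tau(z)\,\frac{e^{-y^2/(4\tau)}}{\tau^{1+s}}$, which is precisely \eqref{eq:PoissonKernel} after collecting constants; the interchange is justified since $u$ is bounded and the kernel is integrable in $(\tau,z)$ for each fixed $y>0$. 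The main obstacle is not any single identity but the bookkeeping of the several Fubini interchanges and the differentiations under the integral sign — these require uniform integrable domination in $\tau$ (and in the auxiliary $r$), which is where one must use that $u\in\Dom(H^s)$ so that both $e^{-\tau H}u$ and $e^{-\tau H}H^su$ have controlled $L^2$-norms, together with the decay of the Gaussian-type weights; once those are in place, the heart of the matter is simply that $\varphi$ solves the modified Bessel equation with the stated behavior at $w=0$.
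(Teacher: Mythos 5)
Your overall strategy — work on the Fourier side via the multiplier $\varphi(y^2m)$, $m=i\rho+|\xi|^2$, and reduce everything to properties of that subordinated Bessel-type multiplier — is the same circle of ideas the paper uses, but you diverge in the details of two steps, and one of them contains a concrete error.

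On the positive side, your verification of the extension PDE via the ODE $w\varphi''+(1-s)\varphi'-\tfrac14\varphi=0$ is a clean alternative to the paper's argument, which instead differentiates under the integral sign in the first representation of $U$, uses $\partial_\tau e^{-\tau H}u=-He^{-\tau H}u$, integrates by parts in $\tau$, and then matches $y^{2s}\partial_\tau[e^{-y^2/(4\tau)}\tau^{-1-s}]$ with $\left(\tfrac{1-2s}{y}\partial_y+\partial_{yy}\right)\!\left[y^{2s}e^{-y^2/(4\tau)}\tau^{-1-s}\right]$. Both are legitimate; yours is arguably shorter once you trust the Bessel ODE for $\varphi$. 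Likewise, your route from the second to the third line of \eqref{eq:U} via $u=H^{-s}(H^su)$, Fubini, and a real substitution $\tilde r=a+v$ (which collapses the resulting double integral because $\int_0^\infty e^{-v}v^{s-1}dv=\Gamma(s)$) actually does work out, whereas the paper instead performs a single complex change of variables $r=y^2/(4\tau(i\rho+|\xi|^2))$ justified by the Cauchy Integral Theorem; these are genuinely different routes to the same identity, and yours avoids contour arguments at the cost of one more Fubini.

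The concrete error is in the Neumann computation: the identity you cite,
\begin{equation*}
\int_0^\infty\bigl(e^{-y^2/(4r)}-1\bigr)\frac{dr}{r^{1+s}}=-\frac{4^s\Gamma(s)}{y^{2s}}\cdot\frac{|\Gamma(-s)|}{\Gamma(s)},
\end{equation*}
is false — the left-hand side diverges at $r\to0^+$ since $e^{-y^2/(4r)}-1\to-1$ there, and $\int_0 r^{-1-s}\,dr=\infty$. What you should have written (if you insist on using the third representation) is
\begin{equation*}
\int_0^\infty\bigl(e^{-y^2/(4r)}-1\bigr)\frac{dr}{r^{1-s}}=-\frac{|\Gamma(-s)|}{4^s}\,y^{2s},
\end{equation*}
which is the source of the constant $\frac{|\Gamma(-s)|}{4^s\Gamma(s)}$. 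The paper instead uses the first representation and the normalization \eqref{integral} to write $\frac{U-u}{y^{2s}}=\frac{1}{4^s\Gamma(s)}\int_0^\infty e^{-y^2/(4\tau)}(e^{-\tau H}u-u)\tfrac{d\tau}{\tau^{1+s}}$, which avoids this issue entirely and passes to the limit directly via \eqref{eq:semigroup formula}. Finally, a minor point: in justifying Fubini for the Poisson formula you invoke boundedness of $u$, but the hypothesis is only $u\in\Dom(H^s)\subset L^2(\R^{n+1})$; the interchange should be justified in the $L^2$ sense (e.g.\ Minkowski's integral inequality together with \eqref{eq:decaimiento en L2}), which is what the paper implicitly does.
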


 {Observe that the hypoellipticity of parabolic operators with smooth coefficients implies that
any $L^2$ solution to the extension equation in \eqref{eq:extension} is smooth
in $(t,x)\in\R^{n+1}$, $y>0$}.

\begin{thm}[Fundamental solution for the extension problem]\label{thm:fundamental}
Let $f=f(t,x)$ be a smooth function with compact support. Define 
\begin{equation}\label{eq:G}
G_s(\tau,z,y):=\frac{1}{\Gamma(s)(4\pi)^{n/2}}\cdot\frac{e^{-(|z|^2+y^2)/(4\tau)}}{\tau^{n/2+1-s}},
\end{equation}
for $(\tau,z)\in\R^{n+1}$, $y>0$. Then the function
$$\mathcal{U}(t,x,y):=\int_0^\infty\int_{\R^n}G_s(\tau,z,y)f(t-\tau,x-z)\,dz\,d\tau,$$
is a classical solution of the extension equation \eqref{eq:extension} and
$$-\lim_{y\to0^+}\frac{\mathcal{U}(t,x,y)-\mathcal{U}(t,x,0)}{y^{2s}}
=\frac{|\Gamma(-s)|}{4^s\Gamma(s)}f(t,x)=-\frac{1}{2s}\lim_{y\to0^+}y^{1-2s}\partial_y\mathcal{U}(t,x,y).$$
\end{thm}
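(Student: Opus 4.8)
The starting point is to rewrite $\mathcal{U}$ through the parabolic heat semigroup. Writing $W_\tau(z)=(4\pi\tau)^{-n/2}e^{-|z|^2/(4\tau)}$ for the Gauss--Weierstrass kernel and $v(t,x,\tau):=e^{-\tau H}f(t,x)=\int_{\R^n}W_\tau(z)f(t-\tau,x-z)\,dz$, one has $G_s(\tau,z,y)=\Gamma(s)^{-1}\,\tau^{s-1}e^{-y^2/(4\tau)}\,W_\tau(z)$, hence
$$\mathcal{U}(t,x,y)=\frac{1}{\Gamma(s)}\int_0^\infty\tau^{s-1}e^{-y^2/(4\tau)}\,v(t,x,\tau)\,d\tau.$$
Since $f\in C_c^\infty(\R^{n+1})$, for every compact $K\subset\R^{n+1}$ there is $\tau_K>0$ with $v(t,x,\tau)=0$ whenever $(t,x)\in K$ and $\tau>\tau_K$; moreover $|v|\le\|f\|_\infty$, $v$ is $C^\infty$ in $(t,x)$, and $v(t,x,\tau)\to f(t,x)$ as $\tau\to0^+$. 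Together with the fact that, for $y$ in a compact subset of $(0,\infty)$, the weight $e^{-y^2/(4\tau)}$ and all of its $(t,x,y)$-derivatives decay faster than any power of $\tau$ as $\tau\to0^+$, this shows that $\mathcal{U}\in C^\infty(\R^{n+1}\times(0,\infty))$, with all derivatives computed by differentiating under the integral sign, and that $\mathcal{U}(t,x,\cdot)$ extends continuously to $y=0$ with $\mathcal{U}(t,x,0)=\Gamma(s)^{-1}\int_0^\infty\tau^{s-1}v(t,x,\tau)\,d\tau$.

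To obtain the extension equation I would use two facts. The first is $(\partial_t-\Delta_x)v=-\partial_\tau v$, which follows by differentiating the integral defining $v$, using $\partial_\tau W_\tau=\Delta_z W_\tau$ and integrating by parts twice in $z$ (this is the semigroup identity $\partial_\tau e^{-\tau H}f=-He^{-\tau H}f$ written out). The second is the elementary identity
$$\partial_\tau\big(\tau^{s-1}e^{-y^2/(4\tau)}\big)=\partial_{yy}\big(\tau^{s-1}e^{-y^2/(4\tau)}\big)+\frac{1-2s}{y}\,\partial_y\big(\tau^{s-1}e^{-y^2/(4\tau)}\big),$$
which expresses that, up to a multiplicative constant, $\tau^{-m/2}e^{-y^2/(4\tau)}$ is the fundamental solution of the radial heat equation $\partial_\tau=\partial_{yy}+\tfrac{m-1}{y}\partial_y$ in ``dimension'' $m=2-2s$, and is verified by a one-line differentiation. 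Setting $\phi(\tau,y)=\tau^{s-1}e^{-y^2/(4\tau)}$, the first fact gives $(\partial_t-\Delta_x)\mathcal{U}=\Gamma(s)^{-1}\int_0^\infty\phi\,(\partial_t-\Delta_x)v\,d\tau=-\Gamma(s)^{-1}\int_0^\infty\phi\,\partial_\tau v\,d\tau$, and integrating by parts in $\tau$ (the boundary terms vanish because $v=0$ for $\tau$ large and $\phi(\tau,y)\to0$ as $\tau\to0^+$ for $y>0$) yields
$$(\partial_t-\Delta_x)\mathcal{U}(t,x,y)=\frac{1}{\Gamma(s)}\int_0^\infty\big(\partial_\tau\phi(\tau,y)\big)\,v(t,x,\tau)\,d\tau,$$
which by the second fact equals $\mathcal{U}_{yy}+\tfrac{1-2s}{y}\mathcal{U}_y$. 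This is equation \eqref{eq:extension}.

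For the boundary identities I would subtract and substitute $\tau=y^2/(4r)$. From the formula for $\mathcal{U}$ this produces
$$\frac{\mathcal{U}(t,x,y)-\mathcal{U}(t,x,0)}{y^{2s}}=\frac{1}{4^s\Gamma(s)}\int_0^\infty(e^{-r}-1)\,r^{-1-s}\,v\big(t,x,\tfrac{y^2}{4r}\big)\,dr.$$
Since $v(t,x,\sigma)\to f(t,x)$ as $\sigma\to0^+$, $|v|\le\|f\|_\infty$, and $\int_0^\infty|e^{-r}-1|r^{-1-s}\,dr<\infty$ (the singularity at $r=0$ is of order $r^{-s}$, integrable as $s<1$; the tail is of order $r^{-1-s}$, integrable as $s>0$), dominated convergence gives the limit $\frac{f(t,x)}{4^s\Gamma(s)}\int_0^\infty(e^{-r}-1)r^{-1-s}\,dr=\frac{\Gamma(-s)}{4^s\Gamma(s)}f(t,x)$, where $\int_0^\infty(e^{-r}-1)r^{-1-s}\,dr=-\tfrac1s\Gamma(1-s)=\Gamma(-s)$ by one integration by parts. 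As $\Gamma(-s)=-|\Gamma(-s)|$ for $0<s<1$, this is the first claimed identity. For the second, differentiating under the integral and applying the same substitution gives $y^{1-2s}\partial_y\mathcal{U}(t,x,y)=-\tfrac{4^{1-s}}{2\Gamma(s)}\int_0^\infty e^{-r}r^{-s}\,v\big(t,x,\tfrac{y^2}{4r}\big)\,dr$, whose limit as $y\to0^+$ is $-\tfrac{4^{1-s}\Gamma(1-s)}{2\Gamma(s)}f(t,x)$; multiplying by $-\tfrac1{2s}$ and using $\Gamma(1-s)=s|\Gamma(-s)|$ gives the same constant $\tfrac{|\Gamma(-s)|}{4^s\Gamma(s)}$.

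I expect the only genuine computation to be the identity for $\phi$, which is also the step that pins down the exponent $1-2s$ in the equation and $n/2+1-s$ in $G_s$. Everything else is bookkeeping: the two recurring devices are the Gaussian factor $e^{-y^2/(4\tau)}$, which controls the small-$\tau$ end, and the compact support of $f$, which controls the large-$\tau$ end; together they make all the differentiations under the integral sign, the integration by parts in $\tau$, and the dominated-convergence limits legitimate and locally uniform in $(t,x,y)$.
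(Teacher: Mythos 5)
Your proof is correct. The paper itself disposes of this theorem with a one-sentence appeal to a Fourier-transform computation, whereas you supply a complete classical (pointwise) argument. The mechanics --- the semigroup rewriting of $\mathcal{U}$, integration by parts in $\tau$ using $\partial_\tau v=-Hv$, the Bessel heat-kernel identity $\partial_\tau\phi=\partial_{yy}\phi+\tfrac{1-2s}{y}\partial_y\phi$ for $\phi=\tau^{s-1}e^{-y^2/(4\tau)}$, and the substitution $\tau=y^2/(4r)$ for the boundary limits --- closely parallel the paper's own computation in the proof of Theorem \ref{thm:extension}, except that you use the normalization corresponding to the third form of $U$ in \eqref{eq:U} (no $y^{2s}$ prefactor), which is the natural choice here since it matches $G_s$ directly. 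What your approach buys is that the Fourier transform never appears: compact support of $f$ controls the large-$\tau$ end (giving $v(t,x,\tau)=0$ for $\tau$ large, locally uniformly in $(t,x)$), and the Gaussian $e^{-y^2/(4\tau)}$ controls the small-$\tau$ end, so every differentiation under the integral, the integration by parts in $\tau$, and the dominated-convergence limits are legitimate pointwise. The constants check out: $\int_0^\infty(e^{-r}-1)r^{-1-s}\,dr=-\tfrac{1}{s}\Gamma(1-s)=\Gamma(-s)=-|\Gamma(-s)|$, and $\tfrac{4^{1-s}\Gamma(1-s)}{4s\Gamma(s)}=\tfrac{\Gamma(1-s)}{4^{s}s\Gamma(s)}=\tfrac{|\Gamma(-s)|}{4^s\Gamma(s)}$, so both Neumann limits agree with the statement.
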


\begin{rem}[Relation between fundamental solution and Poisson kernel for the extension problem]
The Poisson kernel in \eqref{eq:PoissonKernel} (which also appears in \cite[p.~309]{AC}) is given by
the conormal derivative of the fundamental solution \eqref{eq:G}:
$$-\frac{\Gamma(1-s)}{4^{s-1/2}\Gamma(s)}y^{1-2(1-s)}\partial_yG_{1-s}(\tau,z,y)=P_y^s(\tau,z).$$
\end{rem}

\begin{rem}[Divergence and non divergence structures]
The extension equation in \eqref{eq:extension} can be written in divergence form as
$$\partial_tU=y^{-(1-2s)}\dive_{x,y}(y^{1-2s}\nabla_{x,y}U).$$
This is a degenerate equation with Muckenhoupt $A_2$-weight given by $\omega(x,y)=y^{1-2s}$. Degenerate parabolic
equations with these types of weights have been studied by
Chiarenza--Serapioni \cite{Chiarenza-Serapioni} and Guti\'errez--Wheeden \cite{Gutierrez-Wheeden}.
We can also write the extension equation in non divergence form using
the change of variables $z=(y/(2s))^{2s}$ as
\begin{equation}\label{eq:nondivergence}
\partial_tU=\Delta U+z^{2-1/s}U_{zz},\quad z>0.
\end{equation}
In this case we get $y^{1-2s}U_y=(2s)^{1-2s}U_z$, so the Neumann boundary condition \eqref{eq:Neumann} reads
$$-U_z(t,x,0)=\frac{s^{2s}\Gamma(1-s)}{\Gamma(1+s)}H^su(t,x).$$
The same change of variables applied to \eqref{eq:PoissonKernel} and
\eqref{eq:G} allows to compute the Poisson kernel and the fundamental solution of
the non divergence form extension equation \eqref{eq:nondivergence}, respectively.
\end{rem}

Let us recall the parabolic Harnack inequality for caloric functions. There exists
a constant $c>0$ depending only on $n$ such that if $v$ is a solution to
\begin{equation}\label{eq:local}
\begin{cases}
\partial_tv-\Delta v=0,&\hbox{in}~R:=(0,1)\times B_2,\\
v\geq0,&\hbox{in}~R,
\end{cases}
\end{equation}
then
$$\sup_{R^-}v\leq c\inf_{R^+}v,$$
where $R^-=(1/4,1/2)\times B_1$, $R^+=(3/4,1)\times B_1$.
This result was discovered independently by Pini \cite{Pini} and Hadamard \cite{Hadamard}.
For the Harnack inequality for uniformly parabolic equations in divergence form 
see Moser \cite{Moser}, and for degenerate parabolic equations in divergence
form see Chiarenza--Serapioni \cite{Chiarenza-Serapioni}, also 
Guti\'errez--Wheeden \cite{Gutierrez-Wheeden} and Ishige \cite{Ishige}.

Next we obtain the parabolic interior and boundary Harnack inequalities for fractional caloric functions,
and interior H\"older and derivative estimates.
As the random jumps and waiting times are coupled, until now it was not clear at all whether
or not Harnack estimates of parabolic type would be true.

\begin{thm}[Parabolic Harnack inequality]\label{thm:Harnack}
There is a constant $c>0$ depending only on $n$ and $0<s<1$ such that
if $u=u(t,x)\in\Dom(H^s)$ is a solution to
$$\begin{cases}
H^su=0,&\hbox{in}~R=(0,1)\times B_2,\\
u\geq0,&\hbox{in}~(-\infty,1)\times\R^n,
\end{cases}$$
then
$$\sup_{R^-}u\leq c\inf_{R^+}u.$$
\end{thm}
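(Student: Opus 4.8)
The plan is to reduce the Harnack inequality for the nonlocal operator $H^s$ to the known parabolic Harnack inequality for the degenerate-but-local extension equation \eqref{eq:extension}, exactly as in the elliptic setting of Caffarelli--Silvestre. First I would take a nonnegative solution $u$ of $H^su=0$ in $R=(0,1)\times B_2$ with $u\geq0$ in $(-\infty,1)\times\R^n$, and form its extension $U(t,x,y)$ via the Poisson formula \eqref{eq:Poisson formula}. Since the Poisson kernel $P_y^s(\tau,z)$ is nonnegative and $u\geq0$ on the whole past $(-\infty,1)\times\R^n$, we get $U\geq0$ in $(-\infty,1)\times\R^n\times(0,\infty)$. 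By Theorem \ref{thm:extension}, $U$ solves the degenerate parabolic equation $\partial_tU=y^{-(1-2s)}\dive_{x,y}(y^{1-2s}\nabla_{x,y}U)$ with weight $\omega(x,y)=y^{1-2s}\in A_2$, and by the Neumann condition \eqref{eq:Neumann} together with $H^su=0$ in $R$, the conormal derivative $\lim_{y\to0^+}y^{1-2s}U_y(t,x,y)=0$ in $(0,1)\times B_2$. Thus $U$ is a nonnegative solution of the degenerate parabolic equation in a cylinder $(0,1)\times B_2\times(0,2)$ (say) that satisfies a homogeneous Neumann condition on $\{y=0\}$.

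The next step is to run the parabolic Harnack inequality for degenerate equations with $A_2$-weights. Two routes are available. One is to even-reflect $U$ across $\{y=0\}$: since the conormal flux vanishes on $(0,1)\times B_2\times\{0\}$, the even extension $\tilde U(t,x,y):=U(t,x,|y|)$ is a weak solution of the same degenerate parabolic equation (now with weight $|y|^{1-2s}$, still $A_2$ on $\R^{n+1}\times\R$) in a full cylinder $(0,1)\times B_2\times(-\delta,\delta)$. Then the Chiarenza--Serapioni \cite{Chiarenza-Serapioni} parabolic Harnack inequality (or Gutiérrez--Wheeden \cite{Gutierrez-Wheeden}, Ishige \cite{Ishige}) applies directly to $\tilde U$ in this interior cylinder, yielding
$$\sup_{R^-\times(-\delta/2,\delta/2)}\tilde U\leq c\inf_{R^+\times(-\delta/2,\delta/2)}\tilde U.$$
Alternatively, one invokes the boundary version of the degenerate parabolic Harnack inequality up to the $y=0$ face (as is done in the elliptic case). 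Either way, restricting to $y=0$ and using $\lim_{y\to0^+}U(t,x,y)=u(t,x)$ gives
$$\sup_{R^-}u=\sup_{R^-}U(\cdot,\cdot,0)\leq c\inf_{R^+}U(\cdot,\cdot,0)=c\inf_{R^+}u,$$
which is the claim. The constant $c$ depends only on $n$, on $s$ through the $A_2$-constant of $|y|^{1-2s}$, and on the fixed geometry of the cylinders, as required.

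The main technical obstacle is justifying that the reflected extension $\tilde U$ is genuinely a weak (energy) solution of the degenerate parabolic equation across $\{y=0\}$, i.e. that the vanishing conormal derivative from Theorem \ref{thm:extension} — which is established as an $L^2(\R^{n+1})$ limit — actually encodes the correct transmission/Neumann condition in the weak formulation, with the required parabolic energy bounds ($U$, $\nabla_{x,y}U$ locally in the weighted parabolic spaces) so that the Chiarenza--Serapioni machinery is applicable. This requires verifying local energy estimates for $U$ near $\{y=0\}$, most cleanly obtained from the Poisson representation \eqref{eq:Poisson formula} and the explicit bounds on $P_y^s$, together with Caccioppoli-type inequalities for the degenerate equation; one also must check that the $A_2$-weight theory of \cite{Chiarenza-Serapioni} is stated for the precise non-divergence normalization appearing here, or pass through the divergence form noted in the Remark. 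A secondary point is that $R^-$ and $R^+$ sit strictly inside $(0,1)\times B_2$, so the interior degenerate Harnack inequality suffices and no boundary (in $x$ or $t$) issues arise; only the $y=0$ face needs the reflection argument.
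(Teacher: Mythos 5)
Your proposal is correct and follows essentially the same route as the paper: extend via \eqref{eq:Poisson formula}, observe $U\geq0$ from the positivity of $P_y^s$, even-reflect across $y=0$, apply the Chiarenza--Serapioni parabolic Harnack inequality, and restrict to $y=0$. The technical point you flag about the reflection being a genuine weak solution across $\{y=0\}$ is precisely the content of Lemma \ref{lem:reflection}, which the paper proves before this theorem.
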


\begin{rem}[Non negativity condition in Harnack inequality]
Observe that in the local case of the heat equation \eqref{eq:local} we just need the solution $v$
to be nonnegative in $R$. In the case of the fractional heat operator, a sufficient condition
for the Harnack inequality is $u\geq0$ everywhere in $(-\infty,1)\times\R^n$. However, 
this condition is not strictly necessary.
Indeed, in our proof (see Section \ref{Section:Harnack}) we only need $U$ in the extension problem \eqref{eq:extension}
to be nonnegative (see \eqref{eq:Poisson formula}) \textit{above} the rectangle $R$.
A parallel remark applies to the parabolic boundary Harnack inequality
contained in Theorem \ref{thm:boundary Harnack}.
\end{rem}

\begin{cor}[Interior estimates]\label{cor:interior}
 {Let $u\in\Dom(H^s)\cap L^\infty((-\infty,1)\times\R^n)$ be a solution to
\begin{equation}\label{enD}
H^su=0,\quad\hbox{in}~(0,1)\times B_2.
\end{equation}
\begin{enumerate}[$(1)$]
\item \emph{(H\"older continuity).} There exist constants $0<\alpha<1$ and 
$C>0$ depending on $n$ and $s$ such that
$$|u(t,x)-u(\tau,z)|\leq C(|t-\tau|^{1/2}+|x-z|)^\alpha\|u\|_{L^\infty((-\infty,1)\times\R^{n})},$$
for every $(t,x),(\tau,z)\in(1/2,1)\times B_1$.
\item \emph{(Derivative estimates).} For any integer $k\geq0$ and any multi-index $\beta\in\N_0^n$
there is a constant $C=C(k,|\beta|,n,s)>0$ such that
$$\sup_{(1/4,3/4)\times B_1}|\partial_t^kD^\beta_xu(t,x)|\leq C\|u\|_{L^\infty((-\infty,1)\times\R^{n})}.$$
In particular, bounded solutions $u$ to $H^su=0$ in $(0,1)\times B_2$ are $C^\infty$ in the interior.
\end{enumerate}}
\end{cor}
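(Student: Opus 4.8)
The plan is to realize $u$ as the trace of the solution $U$ to the degenerate parabolic extension problem of Theorem~\ref{thm:extension}, reflect $U$ evenly across $\{y=0\}$ to turn it into a weak solution of an $A_2$-degenerate parabolic equation, and then invoke the interior H\"older/Harnack theory for such equations (Chiarenza--Serapioni \cite{Chiarenza-Serapioni}, Guti\'errez--Wheeden \cite{Gutierrez-Wheeden}), upgraded to all orders by scaling and bootstrap. \emph{Reduction to a local problem.} Let $U$ be given by \eqref{eq:U}--\eqref{eq:Poisson formula}. Because $u\in\Dom(H^s)$, the function $H^su\in L^2(\R^{n+1})$ is globally defined and, by \eqref{enD}, vanishes a.e.\ on $Q:=(0,1)\times B_2$; hence the Neumann identity \eqref{eq:Neumann} forces $\lim_{y\to0^+}y^{1-2s}U_y=0$ over $Q$ in the $L^2$ sense, so the even extension $\widetilde{U}(t,x,y):=U(t,x,|y|)$ is a weak solution of
\[
|y|^{1-2s}\partial_t\widetilde{U}=\dive_{x,y}\big(|y|^{1-2s}\nabla_{x,y}\widetilde{U}\big)
\]
in $Q\times(-1,1)$, with $|y|^{1-2s}\in A_2(\R^{n+1})$ since $0<s<1$. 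Moreover a direct computation gives $\int_0^\infty\!\int_{\R^n}P_y^s(\tau,z)\,dz\,d\tau=1$, so the Poisson representation \eqref{eq:Poisson formula} yields $\norm{\widetilde{U}}_{L^\infty(Q\times(-1,1))}\le\norm{u}_{L^\infty((-\infty,1)\times\R^n)}$.

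\emph{H\"older continuity.} I would then apply the interior H\"older estimate for degenerate parabolic equations with $A_2$ weight to $\widetilde{U}$ (equivalently, iterate the parabolic Harnack inequality of Theorem~\ref{thm:Harnack} applied to $\widetilde{U}-\inf\widetilde{U}$ and $\sup\widetilde{U}-\widetilde{U}$ on nested cylinders, using the scaled form of \eqref{eq:scaling} for $U$ to get geometric oscillation decay) on a subcylinder whose parabolic interior compactly contains $(1/2,1)\times B_1\times\{0\}$. This produces $\alpha=\alpha(n,s)\in(0,1)$ and $C=C(n,s)$ with $[\widetilde{U}]_{C^{\alpha/2,\alpha}}\le C\norm{\widetilde{U}}_{L^\infty}$ on a smaller cylinder in the parabolic metric; restricting to $y=0$ and combining with the previous bound gives part $(1)$. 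The essential point — and the step I expect to be the main obstacle — is the reduction itself: one must justify that $\widetilde{U}$ is a genuine weak solution across $\{y=0\}$ over the region where only $H^su=0$ is assumed, which is precisely where the hypothesis $u\in\Dom(H^s)$ is used so that $H^su$ is an honest $L^2$ function vanishing on $Q$. Working with $\widetilde{U}$ on a fixed cylinder (rather than iterating an oscillation decay directly for $u$) has the advantage that the nonlocal ``tail'' of $u$ is automatically absorbed into $\norm{\widetilde{U}}_{L^\infty}$, so no separate tail estimate is needed.

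\emph{Derivative estimates and $C^\infty$ regularity.} I would bootstrap. Since $H^s$ is translation invariant in $(t,x)$ (clear from its Fourier symbol), once part $(1)$ gives $u\in C^{\alpha/2,\alpha}_{t,x}$ locally — hence, after finitely many iterations, $u\in C^{s+\varepsilon,2s+\varepsilon}_{t,x}$ locally — the incremental quotients of $u$ in $x$ and in $t$ (of order $\alpha$ at first, then first differences) again satisfy $H^s(\cdot)=0$ in slightly smaller cylinders, with $L^\infty$ norms controlled by the H\"older seminorm from the previous step; re-applying part $(1)$ improves the regularity exponent each time. Past the first-derivative threshold the derivatives $\partial_t^kD_x^\beta u$ themselves satisfy $H^s(\cdot)=0$, and the iteration runs indefinitely, every step being quantitative and scale invariant through \eqref{eq:scaling}; this gives $\sup_{(1/4,3/4)\times B_1}|\partial_t^kD^\beta_xu|\le C(k,|\beta|,n,s)\norm{u}_{L^\infty((-\infty,1)\times\R^n)}$ and interior smoothness of $u$. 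Alternatively, the higher regularity can be read off $\widetilde{U}$: away from $\{y=0\}$ the extension equation has smooth coefficients, so $\widetilde{U}$ is $C^\infty$ there by hypoellipticity, and the even symmetry across $\{y=0\}$ together with the vanishing weighted conormal derivative propagates smoothness in $(t,x)$ up to the trace.
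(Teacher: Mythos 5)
Your treatment of part $(1)$ matches the paper: reflect the extension $U$ evenly across $\{y=0\}$ using the vanishing weighted conormal condition (which is exactly where $u\in\Dom(H^s)$ and $H^su=0$ on the cylinder are used), apply the Chiarenza--Serapioni interior H\"older estimate for $A_2$-degenerate parabolic equations to $\widetilde U$, bound $\|\widetilde U\|_{L^\infty}$ by $\|u\|_{L^\infty((-\infty,1)\times\R^n)}$ via the normalized Poisson kernel, and restrict to $y=0$. That part is correct and essentially identical to the paper's argument.

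Part $(2)$ has a genuine gap in your primary route. You propose to take incremental quotients of $u$ itself (of order $\alpha$, then first differences, etc.), use translation invariance of $H^s$ to deduce $H^s(\text{quotient})=0$ in a smaller cylinder, and then feed those quotients back into part $(1)$. But part $(1)$ requires control of the quotient in $L^\infty((-\infty,1)\times\R^n)$, and the incremental quotient $v_h(t,x)=\frac{u(t,x+he)-u(t,x)}{|h|^\alpha}$ is only controlled by the local H\"older seminorm \emph{inside} the cylinder; globally it is merely $O(|h|^{-\alpha}\|u\|_{L^\infty})$, which blows up as $h\to0$. This is exactly the nonlocal tail problem you flag earlier as the reason for working with $\widetilde U$ in part $(1)$, and it does not disappear for the bootstrap. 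Your alternative (b) points in the right direction but is left vague: hypoellipticity away from $\{y=0\}$ does not by itself yield uniform derivative bounds up to $y=0$, and ``the symmetry propagates smoothness'' needs an actual mechanism. The paper closes this by running the Caffarelli--Cabr\'e incremental-quotient bootstrap \cite[Lemma 5.6, Cor.\ 5.7]{Caffarelli-Cabre} \emph{on the extension} $\widetilde U$: since the degenerate equation is local, linear, and translation invariant in $(t,x)$ (though not in $y$), the quotients $V_h(t,x,y)=\frac{\widetilde U(t,x+he,y)-\widetilde U(t,x,y)}{|h|^\alpha}$ are again solutions on a slightly smaller cylinder with $L^\infty$ norm controlled by the previous local H\"older seminorm of $\widetilde U$ — no tail arises — and iterating gives $C^{1,\alpha}$ in $x$, then arbitrarily many derivatives, and analogously in $t$ starting from quotients of order $\alpha/2$; one then restricts to $y=0$. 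To repair your proof, replace the bootstrap at the level of $u$ by this bootstrap at the level of $\widetilde U$.
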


Next, let us recall the boundary Harnack inequality for caloric functions, see Kemper \cite{Kemper}.
Let $\Omega$ be a bounded domain of $\R^n$ and suppose that $0\in\partial\Omega$.
We assume that the boundary of $\Omega$ inside the ball $B_2$
can be described as the graph of a Lipschitz function $\psi$ on $\R^{n-1}$
with Lipschitz constant, say, 1, in the $e_n$ direction. More precisely, we assume that $\psi(0)=0$,
$$\Omega\cap B_2=\{(x',x_n)\in\R^n:x_n>\psi(x')\}\cap B_2,$$ 
and that $\partial\Omega\cap B_2=\{(x',\psi(x')):x'\in\R^{n-1}\}\cap B_2$.
 {Fix a point $(t_0,x_0)\in (-2,2)\times\Omega$ such that $t_0>1$. 
There exists a positive constant $C$ such that for every solution
$v=v(t,x)$ to}
$$\begin{cases}
\partial_tv-\Delta v=0,&\hbox{in}~(-2,2)\times\Omega,\\
v\geq0,&\hbox{in}~(-2,2)\times\Omega,
\end{cases}$$
such that
\begin{center}
$v$ vanishes continuously on $(-2,2)\times(\partial\Omega\cap B_2)$,
\end{center}
we have
$$\sup_{(-1,1)\times(\Omega\cap B_1)}v(t,x)\leq Cv(t_0,x_0).$$
 {The constant $C$ depends on $n$, $t_0-1$ and the geometry of $\Omega$.}
For the boundary Harnack inequality
for uniformly parabolic equations in divergence form see Salsa \cite{Salsa}
and for degenerate parabolic equations in divergence form see Ishige \cite{Ishige}.

\begin{thm}[Boundary Harnack inequality]\label{thm:boundary Harnack}
Consider $\Omega$ with the geometry described in the previous paragraph.
Let $u=u(t,x)\in\Dom(H^s)$ be a solution to 
$$\begin{cases}
H^su=0,&\hbox{in}~(-2,2)\times\Omega,\\
u\geq0,&\hbox{in}~(-\infty,2)\times\R^n,
\end{cases}$$
such that
\begin{center}
$u$ vanishes continuously on $(-2,2)\times((\R^n\setminus\Omega)\cap B_2)$.
\end{center}
Let $(t_0,x_0)$ be a fixed point in $(-2,2)\times\Omega$ such that $t_0>1$. Then
$$\sup_{(-1,1)\times(\Omega\cap B_1)}u(t,x)\leq Cu(t_0,x_0),$$
where  {$C>0$ depends only on $n$, $s$, $t_0-1$ and the geometry of $\Omega$}.
\end{thm}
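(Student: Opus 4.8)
The plan is to deduce this statement from the parabolic boundary Harnack (Carleson) estimate for degenerate parabolic equations in divergence form of Ishige \cite{Ishige}, by lifting $u$ to the extension of Theorem \ref{thm:extension}. First I would let $U(t,x,y)$ be the extension of $u$ given by the Poisson formula \eqref{eq:Poisson formula}. Since the kernel $P_y^s(\tau,z)$ is nonnegative and $u\geq0$ on $(-\infty,2)\times\R^n$, it follows that $U\geq0$ on $(-2,2)\times\R^n\times(0,\infty)$. By Theorem \ref{thm:extension} and the divergence-form remark, $U$ solves $\partial_tU=y^{-(1-2s)}\dive_{x,y}(y^{1-2s}\nabla_{x,y}U)$ for $y>0$, with trace $U(t,x,0)=u(t,x)$, and by the Neumann identity \eqref{eq:Neumann} together with $H^su=0$ in $(-2,2)\times\Omega$ the weighted conormal derivative satisfies $\lim_{y\to0^+}y^{1-2s}U_y=0$ on $(-2,2)\times\Omega$; the hypoellipticity remark after Theorem \ref{thm:extension} guarantees $U$ is smooth for $y>0$.

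Next I would reflect $U$ evenly in $y$ across $\{y=0\}\cap((-2,2)\times\Omega)$. Because the weighted conormal derivative vanishes on that set, $\bar U(t,x,y):=U(t,x,|y|)$ is a nonnegative weak solution of $\partial_t\bar U=|y|^{-(1-2s)}\dive_{x,y}(|y|^{1-2s}\nabla_{x,y}\bar U)$ in the cylinder $(-2,2)\times(\Omega\cap B_2)\times(-1,1)$, a degenerate parabolic equation whose weight $|y|^{1-2s}$ is a Muckenhoupt $A_2$ weight on $\R^{n+1}$ in the variables $(x,y)$ (as $1-2s\in(-1,1)$). Since $u$ vanishes continuously on $(-2,2)\times((\R^n\setminus\Omega)\cap B_2)$, the function $\bar U$ vanishes continuously on the lateral boundary $(-2,2)\times((\partial\Omega)\cap B_2)\times(-1,1)$; the continuity up to $y\neq0$ must be extracted from the Poisson representation and the modulus of continuity of $u$ near $\partial\Omega$ via a barrier, or alternatively from the De Giorgi--Nash--Moser boundary continuity for the degenerate equation (Chiarenza--Serapioni \cite{Chiarenza-Serapioni}). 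Crucially, because $\partial\Omega\cap B_2$ is a Lipschitz graph in the $e_n$ direction and the extension variable $y$ does not enter it, the cylinder $(\Omega\cap B_2)\times(-1,1)$ has Lipschitz boundary near the origin, so the geometric hypotheses of Ishige's theorem are met.

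Then I would apply the parabolic boundary Harnack inequality for degenerate equations of Ishige \cite{Ishige} to $\bar U$ on this cylinder: there is $C>0$ depending on $n$, on the $A_2$ constant of $|y|^{1-2s}$ (hence on $s$), on the Lipschitz character of $\Omega\cap B_2$, and on $t_0-1$, such that $\sup\bar U$ over $(-1,1)\times(\Omega\cap B_1)\times(-1/2,1/2)$ is bounded by the value of $\bar U$ at a single interior reference point lying at a fixed distance from the boundary and at time comparable to $t_0$. Since $(t_0,x_0)\in(-2,2)\times\Omega$ with $t_0>1$, the point $(t_0,x_0,0)$ lies at positive distance from the lateral and the $y=\pm1$ boundary, so chaining the interior Harnack inequality for the degenerate equation (Chiarenza--Serapioni \cite{Chiarenza-Serapioni}, Guti\'errez--Wheeden \cite{Gutierrez-Wheeden}, Ishige \cite{Ishige}) along a finite Harnack chain from the reference point to $(t_0,x_0,0)$ yields $\bar U(\text{reference point})\leq C\bar U(t_0,x_0,0)=Cu(t_0,x_0)$, the new constant absorbing the geometry and $t_0-1$. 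Restricting to the slice $y=0$,
\[
\sup_{(-1,1)\times(\Omega\cap B_1)}u
=\sup_{(-1,1)\times(\Omega\cap B_1)\times\{0\}}\bar U
\leq\sup_{(-1,1)\times(\Omega\cap B_1)\times(-1/2,1/2)}\bar U
\leq Cu(t_0,x_0),
\]
which is the desired inequality.

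I expect the main obstacle to be the continuity of $\bar U$ up to the full lateral boundary $(\partial\Omega\cap B_2)\times(-1,1)$: on the slice $\{y=0\}$ this is immediate since $u$ vanishes continuously there, but for $y\neq0$ one must control $U(t,x,y)$ as $x\to\partial\Omega$ uniformly in $y$, which calls for a barrier comparison against the Poisson kernel or a careful use of the boundary regularity theory for the $A_2$-degenerate equation. A secondary, routine point is checking that the even reflection across $\{y=0\}\cap((-2,2)\times\Omega)$ genuinely produces a weak solution, which follows from the vanishing of the weighted conormal derivative there together with the test-function characterization for $A_2$-degenerate parabolic equations.
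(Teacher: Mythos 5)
There is a genuine gap in your proposal, and it concerns the geometry of the boundary where $\bar U$ vanishes. You claim that $\bar U$ vanishes continuously on the full lateral cylinder $(-2,2)\times(\partial\Omega\cap B_2)\times(-1,1)$ and that the remaining difficulty is merely to establish continuity up to $y\neq 0$. This is not correct: for $x\in\partial\Omega$ and $y\neq 0$, the value $U(t,x,y)$ is given by the Poisson formula \eqref{eq:Poisson formula}, which averages $u$ against a Gaussian kernel over all of $\R^{n+1}$. Even though $u$ vanishes continuously on $(\R^n\setminus\Omega)\cap B_2$, the function $u$ is generally strictly positive elsewhere, so $U(t,x,y)>0$ for $y\neq 0$. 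Thus $\bar U$ does not vanish on $(\partial\Omega\cap B_2)\times(-1,1)$, and the hypotheses of Ishige's boundary Harnack inequality are not met on the cylinder $(\Omega\cap B_2)\times(-1,1)$. The vanishing only occurs on the thin set $(-2,2)\times\big((\R^n\setminus\Omega)\cap B_2\big)\times\{0\}$, which has $(n+1)$-dimensional codimension two in $(t,x,y)$-space, not one.

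The actual structure is that of a slit domain: $\bar U$ is a nonnegative weak solution of the $A_2$-degenerate equation in $(-2,2)\times\Omega\times\R$ (and, more generally, a solution away from the slit $(\R^n\setminus\Omega)\times\{0\}$), and it vanishes on the slit but not on any Lipschitz lateral boundary. To reduce to the Lipschitz boundary Harnack framework one needs two transformations, not a direct application. First, flatten $\partial\Omega$ inside $B_2$ by a bilipschitz map acting only in $x$, extended constantly in $t$ and $y$; this makes the slit into the flat set $(\R^n_-\cap B_2)\times\{0\}$ and turns the equation into a degenerate parabolic equation with bounded measurable coefficients and the same $A_2$-weight. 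Second, apply the by-now standard map sending $(x',x_n,y)\in\R^{n+1}\setminus\{x_n\le 0,\,y=0\}$ to the half space $\{x_n>0\}$, constant in $t$; this opens the slit into a genuine Lipschitz (in fact flat) lateral boundary. Only after these two steps does the transformed function vanish continuously on an $n$-dimensional Lipschitz piece of the boundary of the domain in the $(x,y)$-variables, and only then can the boundary Harnack estimate of Ishige (or Salsa for $s=1/2$) be invoked. Without the slit-opening step your argument applies a theorem in a configuration where its boundary hypothesis fails.
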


See Remark \ref{rem:3} for Harnack estimates in the case of bounded measurable coefficients.

 {Monotonicity formulas are a very powerful tool in the study of the regularity
properties of PDEs. They have been used in a number of free boundary problems to exploit
the local properties of the equations by giving information about blowup configurations.
We present a parabolic Almgren-type
frequency formula for $H^s$. The formula is useful, for example,
to analyze the obstacle problem for $H^s$ with zero obstacle,
or to obtain unique continuation properties. We will not address these applications here.
In the case $s=1/2$ our frequency formula reduces to the
formula for the heat equation $\partial_tU-\Delta_{x,y}U=0$ established by
C.-C. Poon in \cite{Poon}, see also
\cite[Chapter~6]{Danielli-Garofalo-Petrosyan-To}.
A different monotonicity formula was recently proved by Athanasopoulos, Caffarelli
and Milakis in \cite{ACM2}. In the elliptic case, namely, for the fractional Laplacian, an Almgren-type
frequency formula was obtained by Caffarelli and Silvestre in \cite{Caffarelli-Silvestre CPDE}.
In our next result, for simplicity, we denote
$$X=(x,y)\in\R^{n+1}_+,~\dive=\dive_{x,y},~\nabla=\nabla_{x,y},~a=1-2s\in(-1,1).$$
We define the \textit{backwards} fundamental solution for the extension problem, see \eqref{eq:G}, as
$$\G_s(t,X)=\frac{1}{\Gamma(s)(4\pi)^{n/2}}\cdot\frac{e^{|X|^2/(4t)}}{(-t)^{n/2+1-s}}\chi_{t<0}.$$}

\begin{thm}[Almgren frequency  formula]\label{thm:Almgren}
 {Let $U=U(t,X)$ be a smooth function of $t\in(-1,0)$ and $X\in\R^{n+1}_+$ such that
$U(t,\cdot),\nabla U(t,\cdot)\in L^2(\R^{n+1}_+,y^adX)$, for all $t\in(-1,0)$.
Suppose that
$$y^a\partial_tU=\dive(y^a\nabla U),\quad\hbox{for}~t\in(-1,0),~X\in\R^{n+1}_+,$$
and
$$\hbox{either}\quad \lim_{y\to0^+}U(t,x,y)=0\quad\hbox{or}\quad-\lim_{y\to0^+}y^aU_y(t,x,y)=0,$$
for every $t\in(-1,0)$ and $x\in\mathbb{R}^{n}$. Let 
$$I(R)=R^2\int_{\R^{n+1}_+}y^a|\nabla U(-R^2,X)|^2\G_s(-R^2,X)\,dX,$$
and
$$H(R)=\int_{\R^{n+1}_+}y^a|U(-R^2,X)|^2\G_s(-R^2,X)\,dX,$$
for $R\in(0,1)$. Then the Almgren frequency function
$$N(R)=I(R)/H(R)$$
is monotone nondecreasing in $R\in(0,1)$. Moreover, $N(R)$ is constant if and only
if $U$ is parabolically homogeneous of degree $\kappa\in\R$,
that is, $U(\lambda^2t,\lambda X)=\lambda^\kappa U(t,X)$.}
\end{thm}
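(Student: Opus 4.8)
The plan is to run the classical Almgren-frequency scheme, in the version developed by C.-C. Poon \cite{Poon} for caloric functions (see also the fractional-elliptic analogue in \cite{Caffarelli-Silvestre CPDE}), now with the degenerate weight $y^a$. Throughout I abbreviate $\G=\G_s$ and write $\langle f,g\rangle_t=\int_{\R^{n+1}_+}y^afg\,\G(t,\cdot)\,dX$, and I introduce the parabolic dilation generator $\mathcal{Z}U=2t\,\partial_tU+X\cdot\nabla U$; differentiating $U(\lambda^2t,\lambda X)=\lambda^\kappa U(t,X)$ at $\lambda=1$ shows that $U$ is parabolically homogeneous of degree $\kappa$ if and only if $\mathcal{Z}U=\kappa U$. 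Three ingredients drive the argument: (i) $\G$ is a \emph{backward} solution, i.e.\ $y^a\partial_t\G=-\dive(y^a\nabla\G)$ for $t<0$, together with the pointwise identities $\nabla\G=\tfrac{X}{2t}\,\G$ and $\mathcal{Z}\G=-(n+1+a)\G$ --- all immediate from the explicit formula \eqref{eq:G}; (ii) two energy identities obtained by integration by parts; and (iii) the Cauchy--Schwarz inequality in $L^2(\R^{n+1}_+,y^a\G(t,\cdot)\,dX)$. All the integrals below are finite and differentiation under the integral sign is legitimate by dominated convergence, because of the Gaussian factor in $\G$ and the hypothesis $U(t,\cdot),\nabla U(t,\cdot)\in L^2(\R^{n+1}_+,y^a\,dX)$.

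\emph{First energy identities.} Using $\dive(y^a\nabla U)=y^a\partial_tU$, the backward equation for $\G$, and two integrations by parts over $\R^{n+1}_+$, I would first establish
$$H'(t)=-2D(t),\qquad D(t):=\int_{\R^{n+1}_+}y^a|\nabla U(t,X)|^2\,\G(t,X)\,dX,$$
and, expanding $\langle U,\mathcal{Z}U\rangle_t$ in the same manner, the Almgren representation of the numerator,
$$I(R)=(-t)\,D(t)=\tfrac12\,\langle U,\mathcal{Z}U\rangle_t\big|_{t=-R^2}.$$
In these integrations by parts the boundary of $\R^{n+1}_+$ is $\{y=0\}$: there the weighted conormal flux $y^a\partial_y\G=\tfrac{y^{a+1}}{2t}\G$ tends to $0$ as $y\to0^+$ since $a>-1$, while the cross boundary term $U\,y^aU_y$ vanishes because, by hypothesis, either $U\to0$ or $y^aU_y\to0$ on $\{y=0\}$; the behaviour at infinity is controlled by the Gaussian decay of $\G$. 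Combining the two identities with $t=-R^2$ gives $\frac{d}{dR}H(R)=\frac2R\langle U,\mathcal{Z}U\rangle_t\big|_{t=-R^2}$.

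\emph{The main identity.} The heart of the matter is to show
$$\frac{d}{dR}I(R)=\frac1R\int_{\R^{n+1}_+}y^a\,(\mathcal{Z}U)^2\,\G(t,X)\,dX\Big|_{t=-R^2}.$$
This is a Rellich--Ne\v{c}as (Pohozaev-type) computation for the weighted equation: one differentiates $I(t)=(-t)D(t)$ in $t$, expands $\partial_t|\nabla U|^2=2\nabla U\cdot\nabla U_t$, integrates by parts so as to produce $\dive(y^a\nabla U)=y^a\partial_tU$, and then repeatedly uses $\nabla\G=\tfrac{X}{2t}\G$ to turn the resulting $X\cdot\nabla(\partial_iU)$ terms into combinations of $\partial_tU$, $X\cdot\nabla U$ and $\mathcal{Z}U$; I expect the algebra to collapse exactly to the displayed quantity. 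I regard this step --- and, within it, checking that the new boundary contributions of $\nabla U$ on $\{y=0\}$ vanish under only the \emph{one-sided} Dirichlet or Neumann condition, keeping track of the a priori regularity of $U$ near $y=0$ --- as the main obstacle of the proof.

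\emph{Monotonicity and rigidity.} With $I=\tfrac12\langle U,\mathcal{Z}U\rangle$ and $H=\langle U,U\rangle$ at time $t=-R^2$, the two previous steps yield
$$N'(R)=\frac{I'(R)H(R)-I(R)H'(R)}{H(R)^2}=\frac1R\cdot\frac{\langle\mathcal{Z}U,\mathcal{Z}U\rangle\,\langle U,U\rangle-\langle U,\mathcal{Z}U\rangle^2}{\langle U,U\rangle^2}\ \geq\ 0,$$
the inequality being Cauchy--Schwarz for the measure $y^a\G(-R^2,\cdot)\,dX$, which has full support in $\R^{n+1}_+$; hence $N$ is monotone nondecreasing on $(0,1)$. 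If $N$ is constant then equality holds in Cauchy--Schwarz for every $R$, forcing $\mathcal{Z}U(-R^2,\cdot)=\lambda(R)\,U(-R^2,\cdot)$ pointwise; comparing with the definition $N=I/H$ identifies $\lambda(R)$ with the constant $\kappa:=2N$, so $\mathcal{Z}U=\kappa U$ on the whole region, and integrating this first-order equation along the parabolic dilations $(t,X)\mapsto(\lambda^2t,\lambda X)$ gives $U(\lambda^2t,\lambda X)=\lambda^\kappa U(t,X)$. Conversely, parabolic homogeneity of degree $\kappa$ gives $\mathcal{Z}U=\kappa U$, so equality holds throughout and $N\equiv\kappa/2$ is constant.
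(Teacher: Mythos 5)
Your proposal follows essentially the same route as the paper's proof: the two pointwise identities for $\G_s$, the integration-by-parts representations of $I$, $H'$, and $I'$ in terms of $\partial_tU+\nabla U\cdot\tfrac{X}{2t}=\tfrac{1}{2t}\mathcal{Z}U$, and a final Cauchy--Schwarz inequality with rigidity giving parabolic homogeneity. The Rellich--Ne\v{c}as computation of $I'(R)$ that you flag as the main obstacle does go through exactly as you anticipate, and the boundary contributions at $\{y=0\}$ vanish under either one-sided condition for the reasons you give.
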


From the estimates for the kernel $K_{s}$ in Theorem \ref{thm:puntual},
the fractional parabolic scaling \eqref{eq:scaling} and Corollary \ref{cor:separate},
we see that $H^s$ is an operator of order $s$ in time and $2s$ in space.
This observation becomes more evident with our new global H\"older estimates.

\begin{thm}[H\"older estimates]\label{thm:Holder}
If $u\in C^{\alpha/2,\alpha}_{t,x}(\R^{n+1})$ for $2s<\alpha$
then $H^su\in C^{\alpha/2-s,\alpha-2s}_{t,x}(\R^{n+1})$ with the estimate
$$\|H^su\|_{C^{\alpha/2-s,\alpha-2s}_{t,x}}\leq C_{n,s}\|u\|_{C^{\alpha/2,\alpha}_{t,x}}.$$
\end{thm}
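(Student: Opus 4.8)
The plan is to run the whole argument in the \emph{parabolic language of semigroups}, reducing the theorem to one scaling computation. Writing $W_\tau(z)=(4\pi\tau)^{-n/2}e^{-|z|^2/(4\tau)}$ for the Gauss--Weierstrass kernel, so that $e^{-\tau H}v(t,x)=\int_{\R^n}W_\tau(z)v(t-\tau,x-z)\,dz$ and $K_s(\tau,z)=W_\tau(z)/(|\Gamma(-s)|\,\tau^{1+s})$, the pointwise formula \eqref{eq:pointwise formula} of Theorem~\ref{thm:puntual} becomes (since $\int_{\R^n}W_\tau=1$) the semigroup representation
$$H^sv(t,x)=\frac{1}{|\Gamma(-s)|}\int_0^\infty\big(v-e^{-\tau H}v\big)(t,x)\,\frac{d\tau}{\tau^{1+s}}.$$
First I would record that for $v\in C^{\alpha/2,\alpha}_{t,x}(\R^{n+1})$ with $\alpha>2s$ this integral converges absolutely and uniformly, with $\|H^sv\|_{L^\infty}\le C_{n,s}\|v\|_{C^{\alpha/2,\alpha}_{t,x}}$: for small $\tau$ one has $\|v-e^{-\tau H}v\|_{L^\infty}\lesssim\tau^{\min\{\alpha,2\}/2}\|v\|_{C^{\alpha/2,\alpha}_{t,x}}$ (the modulus of continuity of $v$ averaged against $W_\tau$; the case $\alpha\ge2$ uses $s<1$, so that $\int_0^1\tau^{-s}d\tau<\infty$), and for large $\tau$ one uses $\|v-e^{-\tau H}v\|_{L^\infty}\le2\|v\|_{L^\infty}$.

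The engine of the proof is the characterization of the parabolic H\"older scale by the semigroup $\{e^{-\tau H}\}_{\tau>0}$ — the \emph{new characterization of parabolic H\"older spaces} announced in the Introduction, with elliptic antecedents in \cite{Stein}: for every $\beta>0$ and every integer $m>\beta/2$, a bounded $v$ lies in $C^{\beta/2,\beta}_{t,x}(\R^{n+1})$ if and only if $\sup_{\tau>0}\tau^{-\beta/2}\|(e^{-\tau H}-\I)^mv\|_{L^\infty(\R^{n+1})}<\infty$, with $\|v\|_{C^{\beta/2,\beta}_{t,x}}$ comparable to $\|v\|_{L^\infty}$ plus this supremum. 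A useful feature I would exploit is that this statement is uniform in $\beta$, hence insensitive to whether $\beta$ or $\beta-2s$ is an integer, so no case distinction is needed. Fix $m$ to be the smallest integer exceeding $\alpha/2$; then automatically $m>(\alpha-2s)/2$. Since $e^{-\tau H}$ is an $L^\infty$ contraction semigroup commuting with itself, Fubini's theorem (legitimate by the convergence bounds of the first paragraph) applied to the semigroup representation yields, for every $\tau>0$,
$$(e^{-\tau H}-\I)^m\big(H^sv\big)=-\frac{1}{|\Gamma(-s)|}\int_0^\infty(e^{-rH}-\I)(e^{-\tau H}-\I)^mv\,\frac{dr}{r^{1+s}}.$$

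It remains to carry out the ``gain of $2s$'' estimate by splitting the $r$--integral at $r=\tau$. For $r\ge\tau$ I would bound the integrand in $L^\infty$ by $2\|(e^{-\tau H}-\I)^mv\|_{L^\infty}\le C\tau^{\alpha/2}\|v\|_{C^{\alpha/2,\alpha}_{t,x}}$ (the last inequality is the easy half of the characterization applied to $v$), so that $\int_\tau^\infty(\cdots)r^{-1-s}dr$ contributes a multiple of $\tau^{\alpha/2-s}=\tau^{(\alpha-2s)/2}$. For $r<\tau$ I would use that $\Psi_\tau:=(e^{-\tau H}-\I)^mv$ is simultaneously $O(\tau^{\alpha/2})$ in $L^\infty$ and $O(1)$ in $C^{\alpha/2,\alpha}_{t,x}$, with constants controlled by $\|v\|_{C^{\alpha/2,\alpha}_{t,x}}$; interpolating, $\Psi_\tau$ is $O(\tau^{(\alpha-\theta)/2})$ in $C^{\theta/2,\theta}_{t,x}$ for any fixed $\theta\in(2s,\min\{2,\alpha\})$ (a nonempty interval since $s<1$ and $\alpha>2s$), so $\|(e^{-rH}-\I)\Psi_\tau\|_{L^\infty}\lesssim r^{\theta/2}\tau^{(\alpha-\theta)/2}\|v\|_{C^{\alpha/2,\alpha}_{t,x}}$; since $\theta/2>s$, integrating $r^{\theta/2}r^{-1-s}$ over $(0,\tau)$ again gives a multiple of $\tau^{(\alpha-\theta)/2}\cdot\tau^{\theta/2-s}=\tau^{(\alpha-2s)/2}$. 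Adding the two pieces, $\sup_{\tau>0}\tau^{-(\alpha-2s)/2}\|(e^{-\tau H}-\I)^m(H^sv)\|_{L^\infty}\le C_{n,s}\|v\|_{C^{\alpha/2,\alpha}_{t,x}}$. Since $m>(\alpha-2s)/2$, the characterization with $\beta=\alpha-2s$, together with $\|H^sv\|_{L^\infty}\le C_{n,s}\|v\|_{C^{\alpha/2,\alpha}_{t,x}}$ from the first paragraph, gives $H^sv\in C^{(\alpha-2s)/2,\alpha-2s}_{t,x}(\R^{n+1})$ with the asserted estimate; taking $v=u$ finishes the proof.

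I expect the main obstacle to be the characterization of $C^{\beta/2,\beta}_{t,x}$ by the semigroup $\{e^{-\tau H}\}$ itself, in particular the nontrivial direction $\|v\|_{C^{\beta/2,\beta}_{t,x}}\lesssim\|v\|_{L^\infty}+\sup_\tau\tau^{-\beta/2}\|(e^{-\tau H}-\I)^mv\|_{L^\infty}$ and its uniformity in $\beta$; the natural tools for it are the degenerate-parabolic extension problem of Theorem~\ref{thm:extension} and the kernel estimates (1)--(2) of Theorem~\ref{thm:puntual}. A secondary technical point is justifying the interchanges of $H^s$, the semigroups, and the singular integral $\int_0^\infty(\cdots)r^{-1-s}dr$ when $v$ is only H\"older; here the convergence estimates of the first paragraph do the job. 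Everything else is routine scaling bookkeeping, the one thing to keep track of being that in the parabolic setting a unit of the semigroup time $\tau$ weighs like the \emph{square} of a spatial length, which is exactly why $\alpha$ loses $2s$ in the space exponent and $s$ in the time exponent.
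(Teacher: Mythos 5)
Your architecture — a semigroup characterization of the parabolic H\"older scale, the representation $H^su=\frac{1}{|\Gamma(-s)|}\int_0^\infty(u-e^{-\tau H}u)\,\tau^{-1-s}\,d\tau$, Fubini, and a split of the singular integral to gain $2s$ orders — is exactly the shape of the paper's argument, and the scaling computations you carry out (the $\min\{\alpha,2\}/2$ bound on $\|v-e^{-\tau H}v\|_{L^\infty}$, the interpolation step, the two regimes $r\lessgtr\tau$) are all correct. The difference is which semigroup you characterize with: you work directly with the heat semigroup $e^{-\tau H}$ and iterated differences $(e^{-\tau H}-\I)^m$, whereas the paper uses the subordinated Poisson semigroup $P_y=e^{-yH^{1/2}}$ and measures decay of $\partial_y^kP_yu$ (Theorems~\ref{caract} and~\ref{prop:2}), combined with the identity \eqref{1} expressing $H^s$ through iterates of $e^{-\tau H^{1/2}}-\I$ rather than through $e^{-\tau H}-\I$, and then splits at $\tau=y$. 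These are genuinely parallel routes; the paper's choice is driven by the extension problem of Theorem~\ref{thm:extension}, which makes the $P_y$-characterization the natural one to develop and reuse across the Schauder estimates.

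The genuine gap is the lemma you yourself flag as ``the main obstacle'': the characterization
\[
v\in C^{\beta/2,\beta}_{t,x}(\R^{n+1})\ \Longleftrightarrow\ \sup_{\tau>0}\tau^{-\beta/2}\bigl\|(e^{-\tau H}-\I)^m v\bigr\|_{L^\infty}<\infty
\]
is nowhere established in the paper (the paper proves the $P_y$-version, which is a different and not interchangeable statement) and is not proved in your proposal either; without it the proof does not close. Moreover, the specific feature you want from it — ``uniform in $\beta$, hence no case distinction'' — is not actually available: for integer $\beta$ the right-hand side defines a Zygmund-type class strictly larger than $C^{\beta/2,\beta}_{t,x}$, exactly as $\Lambda^\alpha\supsetneq C^{\alpha/2,\alpha}_{t,x}$ for integer $\alpha$ in Theorem~\ref{prop:2}(4). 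So even if you prove the characterization, the integer case of $\alpha-2s$ would require the same Zygmund-class caveat that appears in Theorem~\ref{thm:Schauder}. The remedy is either to prove the $e^{-\tau H}$-characterization along the lines of Lemmas~\ref{lema1}--\ref{lema2} (replacing the Poisson kernel bounds \eqref{formula2}, \eqref{segunPoisson} by Gauss--Weierstrass kernel bounds) and then state the conclusion in the Zygmund class when $\alpha-2s$ is an integer, or simply to quote Theorems~\ref{caract} and~\ref{prop:2} and run your split with $P_y$ and the representation \eqref{1}, which is what the paper does.
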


In the following result we show how to solve the global nonlocal Poisson problem
\begin{equation}\label{eq:Poisson problem}
H^su=f,\quad\hbox{in}~\R^{n+1}.
\end{equation}

\begin{thm}[Poisson problem -- Fundamental solution for $H^s$]\label{thm:solution Poisson problem}
Let $f=f(t,x)$ be an $L^2(\R^{n+1})$ function with enough decay at infinity.
Then an $L^2(\R^{n+1})$ solution $u=u(t,x)$ to \eqref{eq:Poisson problem} is given by
\begin{align*}
u(t,x) &= \frac{1}{\Gamma(s)}\int_0^\infty e^{-\tau H}f(t,x)\,\frac{d\tau}{\tau^{1-s}} \\
&=\int_0^\infty\int_{\R^n}K_{-s}(\tau,z)f(t-\tau,x-z)\,dz\,d\tau,
\end{align*}
where
$$K_{-s}(\tau,z)=\frac{1}{(4\pi)^{n/2}\Gamma(s)}\cdot\frac{e^{-|z|^2/(4\tau)}}{\tau^{n/2+1-s}},$$
for $\tau>0$, $z\in\R^n$. Moreover, for some $0<\lambda\leq\Lambda$ we have the estimates
\begin{enumerate}[$(1)$]
\item $\displaystyle K_{-s}(\tau,z)\geq\frac{\lambda}{|z|^{n+2-2s}}$, when $\tau\sim|z|^2$,
\item $\displaystyle K_{-s}(\tau,z)\leq\frac{\Lambda}{|z|^{n+2-2s}+\tau^{(n+2-2s)/2}}$, for every $z\neq0$.
\end{enumerate}
\end{thm}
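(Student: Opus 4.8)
The plan is to obtain the representation formula from the Balakrishnan-type subordination identity and then to read off the kernel $K_{-s}$ and its size bounds from the explicit Gauss--Weierstrass form of the parabolic semigroup $e^{-\tau H}$. First I would recall the elementary identity, obtained from the definition of the Gamma function by the substitution $t\mapsto\tau z$,
\[
z^{-s}=\frac{1}{\Gamma(s)}\int_0^\infty e^{-\tau z}\,\frac{d\tau}{\tau^{1-s}},\qquad \Re z>0,
\]
and apply it with $z=i\rho+|\xi|^2$, which has real part $|\xi|^2>0$ for $\xi\neq0$. Since $|e^{-\tau(i\rho+|\xi|^2)}|=e^{-\tau|\xi|^2}$, the integral converges absolutely; multiplying by $\widehat f(\rho,\xi)$ and using Fubini, this identifies the Fourier multiplier of the operator $f\mapsto\frac{1}{\Gamma(s)}\int_0^\infty e^{-\tau H}f\,\frac{d\tau}{\tau^{1-s}}$ with $(i\rho+|\xi|^2)^{-s}$. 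Hence the function $u$ in the statement satisfies $\widehat u(\rho,\xi)=(i\rho+|\xi|^2)^{-s}\widehat f(\rho,\xi)$, so $\widehat{H^su}=(i\rho+|\xi|^2)^s(i\rho+|\xi|^2)^{-s}\widehat f=\widehat f$, that is $H^su=f$; and, by Plancherel, $u\in L^2(\R^{n+1})$ as soon as $(\rho^2+|\xi|^4)^{-s}|\widehat f(\rho,\xi)|^2$ is integrable, which is the meaning of the decay assumption on $f$.

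Next I would pass to physical space using the definition of the parabolic semigroup from Section~\ref{Section:puntual}, namely
\[
e^{-\tau H}f(t,x)=\frac{1}{(4\pi\tau)^{n/2}}\int_{\R^n}e^{-|z|^2/(4\tau)}f(t-\tau,x-z)\,dz,
\]
substitute it into $u=\frac{1}{\Gamma(s)}\int_0^\infty e^{-\tau H}f\,\frac{d\tau}{\tau^{1-s}}$, and interchange the order of integration (again justified by the decay of $f$) to obtain
\[
u(t,x)=\int_0^\infty\int_{\R^n}K_{-s}(\tau,z)\,f(t-\tau,x-z)\,dz\,d\tau,\qquad K_{-s}(\tau,z)=\frac{\tau^{s-1}}{\Gamma(s)}\cdot\frac{e^{-|z|^2/(4\tau)}}{(4\pi\tau)^{n/2}},
\]
which is exactly the claimed expression after simplifying the powers of $\tau$, and which exhibits $K_{-s}$ as the analogue of the kernel $K_s$ of Theorem~\ref{thm:puntual} with $|\Gamma(-s)|$ replaced by $\Gamma(s)$ and the exponent $n/2+1+s$ replaced by $n/2+1-s$. (Alternatively, since $G_s(\tau,z,0)=K_{-s}(\tau,z)$ for $G_s$ as in Theorem~\ref{thm:fundamental}, the same formula follows from that theorem together with the Neumann identity \eqref{eq:Neumann} and uniqueness for the extension problem.)

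The bounds $(1)$ and $(2)$ are then the same elementary estimates carried out in the proof of Theorem~\ref{thm:puntual}. For $(1)$, if $c_1|z|^2\le\tau\le c_2|z|^2$ then $e^{-|z|^2/(4\tau)}\ge e^{-1/(4c_1)}$ and $\tau^{n/2+1-s}\le c_2^{n/2+1-s}|z|^{n+2-2s}$, which gives the lower bound $\lambda|z|^{-(n+2-2s)}$. For $(2)$, since $(n+2-2s)/2=n/2+1-s$ we have $\tau^{(n+2-2s)/2}=\tau^{n/2+1-s}$, so it suffices to bound the two pieces $e^{-|z|^2/(4\tau)}\le 1$ and $(|z|^2/\tau)^{(n+2-2s)/2}e^{-|z|^2/(4\tau)}=g(|z|^2/\tau)$, where $g(r)=r^{(n+2-2s)/2}e^{-r/4}$ is bounded on $(0,\infty)$ because $n+2-2s>0$; adding the two estimates produces the upper bound in $(2)$.

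The main difficulty is not any single computation but the functional-analytic bookkeeping concealed in the phrase ``with enough decay at infinity'': because the multiplier $(i\rho+|\xi|^2)^{-s}$ blows up at the origin, $H^{-s}$ does not act boundedly on all of $L^2(\R^{n+1})$, so one must quantify the decay of $f$ precisely enough to guarantee simultaneously that $u\in L^2$, that the Fubini interchanges above are legitimate, and that the identification of the Fourier multiplier is valid. Once this is pinned down, the remainder of the argument is routine.
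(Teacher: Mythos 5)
Your proposal is correct and takes essentially the same route as the paper: the paper's proof simply invokes the Gamma-function identity $(i\rho+|\xi|^2)^{-s}=\frac{1}{\Gamma(s)}\int_0^\infty e^{-\tau(i\rho+|\xi|^2)}\,\frac{d\tau}{\tau^{1-s}}$, multiplies by $\widehat f$, takes the inverse Fourier transform, and then passes to the kernel form of $e^{-\tau H}$ via \eqref{eq:el calor}, declaring estimates $(1)$ and $(2)$ obvious. Your write-up is a more detailed version of that argument, correctly filling in the Plancherel/Fubini bookkeeping, the elementary kernel estimates (including the observation that $(n+2-2s)/2=n/2+1-s$), and an honest acknowledgment of what ``enough decay at infinity'' must mean, all of which match the paper's intent.
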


\begin{rem}[Separated variables in the Poisson problem]
If in Theorem \ref{thm:solution Poisson problem} the right hand side $f$ does not depend on $t$ then
$u$ is nothing but the convolution of $f$ with the fundamental solution of the fractional Laplacian $u=(-\Delta)^{-s}f$.
When $f$ does not depend on $x$ then
$$u(t)=(\partial_t)^{-s}f(t)=\frac{1}{\Gamma(s)}\int_{-\infty}^t\frac{f(\tau)}{(t-\tau)^{1-s}}\,d\tau,$$
that is, $u$ is the Weyl fractional integral of $f$, see \cite{bernardis, Sa}.
Observe that $(\partial_t)^{-s}$  is the inverse of the Marchaud
fractional derivative appearing in Corollary \ref{cor:separate}.
In fact, a Fractional Fundamental Theorem of Calculus
on general one-sided weighted Lebesgue spaces was recently proved in \cite{bernardis}.
\end{rem}

The definitions of H\"older and Zygmund spaces are contained in Sections \ref{Section:puntual}
and \ref{Section:Holder}, respectively.

\begin{thm}[Global Schauder estimates]\label{thm:Schauder}
Let $u$ be a solution to $H^su=f$ in $\R^{n+1}$ as described in Theorem \ref{thm:solution Poisson problem}.
\begin{enumerate}[$(1)$]
	\item Let $f\in C^{\alpha/2,\alpha}_{t,x}(\R^{n+1})$ for some $0<\alpha\le1$.
		\begin{enumerate}[$(a)$]
			\item If $\alpha+2s$ is not an integer
			then $u$ is in $C^{\alpha/2+s,\alpha+2s}_{t,x}(\R^{n+1})$ with the estimate
			$$\|u\|_{C^{\alpha/2+s,\alpha+2s}_{t,x}}\leq C\big(\|f\|_{C^{\alpha/2,\alpha}_{t,x}}+\|u\|_{L^\infty(\R^{n+1})}\big).$$
			\item If $\alpha+2s$ is an integer
			then $u$ is in the parabolic H\"older--Zygmund space 
			$\Lambda^{\alpha/2+s,\alpha+2s}_{t,x}$ with the estimate
			$$\|u\|_{\Lambda^{\alpha/2+s,\alpha+2s}_{t,x}}\leq C\big(\|f\|_{C^{\alpha/2,\alpha}_{t,x}}+\|u\|_{L^\infty(\R^{n+1})}\big).$$
		\end{enumerate}
		The constants $C$ in $(a)$ and $(b)$ depend only on dimension, $\alpha$ and $s$.		
	\item Let $f\in L^\infty(\R^{n+1})$.
	\begin{enumerate}[$(i)$]
		\item If $0<s<1/2$ then $u\in C^{s,2s}_{t,x}(\R^{n+1})$ with the estimate
		$$\|u\|_{C^{s,2s}_{t,x}}\leq C\big(\|f\|_{L^\infty(\R^{n+1})}+\|u\|_{L^\infty(\R^{n+1})}\big).$$
		\item If $s=1/2$ then $u$ is in the parabolic H\"older-Zygmund space $\Lambda^{1/2,1}_{t,x}$ with the estimate
		$$\|u\|_{\Lambda^{1/2,1}_{t,x}}\leq C\big(\|f\|_{L^\infty(\R^{n+1})}+\|u\|_{L^\infty(\R^{n+1})}\big).$$
		\item If $1/2<s<1$ then $u\in C^{s,1+(2s-1)}_{t,x}(\R^{n+1})$ with the estimate
		$$\|u\|_{C^{s,1+(2s-1)}_{t,x}}\leq C\big(\|f\|_{L^\infty(\R^{n+1})}+\|u\|_{L^\infty(\R^{n+1})}\big).$$
	\end{enumerate}
	The constants $C$ in $(i)$, $(ii)$ and $(iii)$ depend only on dimension and $s$.
\end{enumerate} 
\end{thm}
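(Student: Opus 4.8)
The plan is to read off all the estimates from a semigroup characterization of the parabolic H\"older and H\"older--Zygmund spaces established in Section~\ref{Section:Holder}, combined with the representation
$$u=\frac1{\Gamma(s)}\int_0^\infty e^{-\tau H}f(t,x)\,\frac{d\tau}{\tau^{1-s}}$$
from Theorem~\ref{thm:solution Poisson problem}. The characterization I would use states that, for $\beta>0$, a bounded function $g$ lies in $C^{\beta/2,\beta}_{t,x}(\R^{n+1})$ (when $\beta\notin\N$) or in $\Lambda^{\beta/2,\beta}_{t,x}(\R^{n+1})$ (when $\beta\in\N$) if and only if
$$\|\partial_\tau^k e^{-\tau H}g\|_{L^\infty(\R^{n+1})}\le C\,\tau^{\beta/2-k},\qquad\tau>0,$$
for some (equivalently every) integer $k$ with $2k>\beta$, with the optimal $C$ comparable to the corresponding seminorm; for non-integer $\beta$ one recovers the full H\"older norm by interpolating this bound with $\|e^{-\tau H}g\|_\infty\le\|g\|_\infty$, while for integer $\beta$ the displayed bound is sharp and is exactly the Zygmund condition. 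Together with it I would use the standard parabolic kernel bounds $\|\partial_\tau^k e^{-\tau H}\|_{L^\infty\to L^\infty}\le C\tau^{-k}$.

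The heart of the argument is a single lifting lemma: if $g$ is bounded and satisfies the order-$\beta$ bounds above, then $h:=\frac1{\Gamma(s)}\int_0^\infty e^{-rH}g\,\frac{dr}{r^{1-s}}$ satisfies the order-$(\beta+2s)$ bounds, with seminorm controlled by the order-$\beta$ seminorm of $g$ plus $\|g\|_\infty$. I would prove it by writing, for $0<\tau<1$ and $m$ an integer with $2m>\beta+2s$,
$$\partial_\tau^m e^{-\tau H}h=\frac1{\Gamma(s)}\int_0^\infty\partial_\tau^m e^{-(\tau+r)H}g\,\frac{dr}{r^{1-s}},$$
splitting the $r$-integral as $\int_0^\tau+\int_\tau^1+\int_1^\infty$. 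On $(0,\tau)$ one writes $\partial_\tau^m e^{-(\tau+r)H}=\partial_\tau^{m-k}e^{-\frac{\tau}{2}H}\circ\partial_\tau^{k}e^{-(\frac\tau2+r)H}$ with $k$ fixed so that $2k>\beta$, uses the order-$\beta$ bound for $g$ and the kernel bound for the remaining factor to get $\lesssim(\tau+r)^{\beta/2-k}\tau^{-(m-k)}\sim\tau^{\beta/2-m}$, and integrates $r^{s-1}$ to produce $\tau^{\beta/2+s-m}$; on $(\tau,1)$ the same bound gives $\lesssim r^{\beta/2-m}$ integrated against $r^{s-1}$, which, since $\beta/2+s-m<0$, is again $\lesssim\tau^{\beta/2+s-m}$; on $(1,\infty)$ one instead uses $\|\partial_\tau^m e^{-(\tau+r)H}g\|_\infty\lesssim r^{-m}\|g\|_\infty$, giving a finite constant (as $m\ge1>s$) that for $\tau<1$ is absorbed into $\tau^{\beta/2+s-m}$. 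For $\tau\ge1$ one uses $\|\partial_\tau^m e^{-\tau H}h\|_\infty\lesssim\tau^{-m}\|h\|_\infty\le\tau^{(\beta+2s)/2-m}\|h\|_\infty$ directly. Part~(1) then follows by taking $g=f$, $\beta=\alpha$: for $\alpha+2s\notin\N$ one lands in $C^{\alpha/2+s,\alpha+2s}_{t,x}$, and for $\alpha+2s\in\N$ in $\Lambda^{\alpha/2+s,\alpha+2s}_{t,x}$. Part~(2) is the endpoint $\beta=0$: a bounded $f$ satisfies the trivial order-$0$ bounds $\|\partial_\tau^k e^{-\tau H}f\|_\infty\lesssim\tau^{-k}\|f\|_\infty$, so $u$ satisfies order-$2s$ bounds, and the three sub-cases are exactly $2s<1$, $2s=1$ (the Zygmund space $\Lambda^{1/2,1}_{t,x}$), $2s>1$, where $C^{s,1+(2s-1)}_{t,x}$ means one $x$-derivative of H\"older class $2s-1$ together with H\"older class $s$ in $t$.

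The main obstacle is the borderline analysis in cases~(1)(b) and~(2)(ii), when $\alpha+2s\in\N$: there the order-$(\beta+2s)$ estimate is sharp, so the splitting above must not lose a logarithmic factor, and one must verify that the resulting bound with $2m=\alpha+2s+2$ is genuinely equivalent to membership in the parabolic Zygmund space, in both the time and the (parabolically weighted) space directions; this is precisely the content of the two-sided characterization of Section~\ref{Section:Holder}, and its careful use is the crux of the proof. A secondary technical point is the justification of differentiation under the integral sign and of identities such as $\partial_\tau e^{-(\tau+r)H}=\partial_r e^{-(\tau+r)H}$ for $f$ that is a priori only $L^2$ with decay, handled by the smoothing properties of $e^{-\tau H}$ together with the decay hypothesis; one should also note at the outset that $u\in L^\infty(\R^{n+1})$ under the decay assumption on $f$, so that the stated right-hand sides are finite. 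Once the semigroup bounds are in hand, converting them back into the H\"older, Schauder and Zygmund norms is routine via the characterization.
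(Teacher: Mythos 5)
You are using the right \emph{type} of argument (a semigroup characterization of parabolic H\"older--Zygmund spaces applied to the convolution formula for $u=H^{-s}f$), but you have chosen the ``wrong'' semigroup relative to the paper, and you misattribute the characterization you invoke. Section~\ref{Section:Holder} establishes the characterization (Theorems~\ref{caract} and~\ref{prop:2}) in terms of the \emph{subordinated Poisson semigroup} $P_y=e^{-yH^{1/2}}$, i.e.\ via bounds $\|\partial_y^k P_y g\|_{L^\infty}\le Cy^{\beta-k}$ with $k>\beta$, not in terms of $\|\partial_\tau^k e^{-\tau H}g\|_{L^\infty}\le C\tau^{\beta/2-k}$ with $2k>\beta$ as you state. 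The heat-semigroup characterization you write down is a correct and classical analogue, but it is not proved in the paper, so invoking it ``as the content of Section~\ref{Section:Holder}'' leaves a gap: you would have to prove it (say by transference through the subordination formula $P_y=\frac{y}{2\sqrt\pi}\int_0^\infty e^{-y^2/(4\tau)}e^{-\tau H}\,\frac{d\tau}{\tau^{3/2}}$) before the rest of your argument applies.

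Modulo this, your plan works and your three-way split of $\int_0^\infty\partial_\tau^m e^{-(\tau+r)H}g\,\frac{dr}{r^{1-s}}$ is correct (including the tacit different factorization on $(\tau,1)$, pulling the $\partial_\tau^{m-k}$ onto $e^{-(r/2)H}$), but the paper's own proof is both shorter and avoids the extra lemma entirely: it uses the \emph{Poisson-semigroup} representation $u=\frac{1}{\Gamma(2s)}\int_0^\infty P_\tau f\,\frac{d\tau}{\tau^{1-2s}}$ (formula \eqref{2}), so that $P_yu=\frac{1}{\Gamma(2s)}\int_0^\infty P_{y+\tau}f\,\frac{d\tau}{\tau^{1-2s}}$ by the semigroup law, and then applies the already-established bound $\|\partial_y^kP_{y+\tau}f\|_{L^\infty}\le C\|f\|_{\Lambda^\alpha}(y+\tau)^{\alpha-k}$ directly; the resulting integral $\int_0^\infty (y+\tau)^{\alpha-k}\tau^{2s-1}\,d\tau$ scales immediately to $y^{\alpha+2s-k}$ with no splitting at all. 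Part~(2) is handled in the same way, replacing the H\"older bound on $f$ by the trivial $L^\infty\to L^\infty$ kernel bounds (Lemma~\ref{lema1}(ii) and \eqref{3Poisson}). So your approach buys nothing over the paper's, and costs you an extra characterization lemma; if you switch to the Poisson semigroup — same representation formula, same philosophy — your argument collapses into the paper's one-line computation.
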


 {One of our main contributions here is not only the estimates above,
but the new characterization of the parabolic H\"older space (defined pointwise in Krylov \cite{Krylov})
and the definition and  characterization of parabolic Zygmund
spaces obtained in Theorems \ref{caract} and \ref{prop:2}
in Section \ref{Section:Holder}. These results are of independent interest.
Parallel results can be found in the elliptic case, see \cite[Chapter V]{Stein}.}  
These descriptions permit us to use our formulas \eqref{1} and \eqref{2}
to derive the H\"older and Schauder estimates.
This is in contrast with other regularity methods that use the pointwise formulas \cite{Krylov,Silvestre Thesis}
or Fourier multipliers techniques \cite{Madych-Riviere}. On top of that, our method is widely applicable.
Observe that if $f$ does not depend on $t$ then we recover the 
Schauder estimates for the fractional Laplacian \cite{Caffarelli-Stinga, Silvestre Thesis},
while if $f$ does not depend on $x$ then we get Schauder estimates
for the Marchaud fractional derivative, see also \cite{Sa}.
We mention that some related fractional powers associated to space-time derivative operators
can be found in the exhaustive monograph \cite{Sa}.
For the local case $s=1$, parabolic Schauder estimates can be found, for example,
in Krylov \cite[Chapter~8]{Krylov}.

Our last main result contains the local Schauder estimates.

\begin{thm}[Local Schauder estimates]\label{thm:interiorSchauder}
 {Let $u\in\Dom(H^s)\cap L^\infty((-\infty,1)\times\R^n)$ be a solution to
$$H^su=f,\quad\hbox{in}~(0,1)\times B_2.$$
\begin{enumerate}[$(1)$]
	\item Let $f\in C^{\alpha/2,\alpha}_{t,x}((0,1)\times B_2)$ for some $0<\alpha\le1$.
		\begin{enumerate}[$(a)$]
			\item If $\alpha+2s$ is not an integer
			then $u$ is in $C^{\alpha/2+s,\alpha+2s}_{t,x}((1/4,3/4)\times B_1)$ with the estimate
			$$\|u\|_{C^{\alpha/2+s,\alpha+2s}_{t,x}((1/4,3/4)\times B_1)}
			\leq C\big(\|f\|_{C^{\alpha/2,\alpha}_{t,x}((0,1)\times B_2)}+\|u\|_{L^\infty((-\infty,1)\times\R^{n})}\big).$$
			\item If $\alpha+2s$ is an integer
			then $u$ is in the parabolic H\"older--Zygmund space 
			$\Lambda^{\alpha/2+s,\alpha+2s}_{t,x}((1/4,3/4)\times B_1)$ with the estimate
			$$\|u\|_{\Lambda^{\alpha/2+s,\alpha+2s}_{t,x}((1/4,3/4)\times B_1)}
			\leq C\big(\|f\|_{C^{\alpha/2,\alpha}_{t,x}((0,1)\times B_2)}+\|u\|_{L^\infty((-\infty,1)\times\R^{n})}\big).$$
		\end{enumerate}
		The constants $C$ in $(a)$ and $(b)$ depend only on dimension, $\alpha$ and $s$.		
	\item Let $f\in L^\infty((0,1)\times B_2)$.
	\begin{enumerate}[$(i)$]
		\item If $0<s<1/2$ then $u\in C^{s,2s}_{t,x}((1/4,3/4)\times B_1)$ with the estimate
		$$\|u\|_{C^{s,2s}_{t,x}((1/4,3/4)\times B_1)}\leq C\big(\|f\|_{L^\infty((0,1)\times B_2)}
		+\|u\|_{L^\infty((-\infty,1)\times\R^{n})}\big).$$
		\item If $s=1/2$ then $u$ is in the parabolic H\"older-Zygmund space
		$\Lambda^{1/2,1}_{t,x}((1/4,3/4)\times B_1)$ with the estimate
		$$\|u\|_{\Lambda^{1/2,1}_{t,x}((1/4,3/4)\times B_1)}\leq C\big(\|f\|_{L^\infty((0,1)\times B_2)}
		+\|u\|_{L^\infty((-\infty,1)\times\R^{n})}\big).$$
		\item If $1/2<s<1$ then $u\in C^{s,1+(2s-1)}_{t,x}((1/4,3/4)\times B_1)$ with the estimate
		$$\|u\|_{C^{s,1+(2s-1)}_{t,x}((1/4,3/4)\times B_1)}\leq C\big(\|f\|_{L^\infty((0,1)\times B_2)}
		+\|u\|_{L^\infty((-\infty,1)\times\R^{n})}\big).$$
	\end{enumerate}
	The constants $C$ in $(i)$, $(ii)$ and $(iii)$ depend only on dimension and $s$.
\end{enumerate}}
\end{thm}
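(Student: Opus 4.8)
The plan is to deduce the local estimate from the global Schauder estimates of Theorems~\ref{thm:solution Poisson problem} and \ref{thm:Schauder}, together with the interior $C^\infty$ bounds of Corollary~\ref{cor:interior}, via the classical decomposition $u=v+w$ in which $w$ solves the equation \emph{globally} with a truncated right-hand side while $v=u-w$ solves the \emph{homogeneous} equation near the target point. The reason for subtracting a global solution rather than multiplying $u$ by a space-time cutoff is that $H^s$ is nonlocal: a cutoff $\eta u$ would produce a commutator term $\int_0^\infty\int_{\R^n}(\eta(t,x)-\eta(t-\tau,x-z))(u(t-\tau,x-z)-u(t,x))K_s(\tau,z)\,dz\,d\tau$ whose regularity one would have to control by hand, whereas the decomposition below removes it.

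First I would fix $\eta\in C_c^\infty(\R^{n+1})$ with $0\le\eta\le1$, $\supp\eta\subset(0,1)\times B_2$ and $\eta\equiv1$ on $Q:=(1/8,7/8)\times B_{3/2}$, and set $\tilde f:=\eta f$ extended by $0$ to all of $\R^{n+1}$. Multiplying a parabolic H\"older (resp. bounded) function by a smooth compactly supported cutoff is routine, so $\tilde f\in C^{\alpha/2,\alpha}_{t,x}(\R^{n+1})$ with $\|\tilde f\|_{C^{\alpha/2,\alpha}_{t,x}(\R^{n+1})}\le C\|f\|_{C^{\alpha/2,\alpha}_{t,x}((0,1)\times B_2)}$ in case~(1) (resp. $\|\tilde f\|_{L^\infty(\R^{n+1})}\le\|f\|_{L^\infty((0,1)\times B_2)}$ in case~(2)), and $\tilde f$ has compact support, hence more than enough decay. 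Let $w$ be the $L^2$ solution of $H^sw=\tilde f$ in $\R^{n+1}$ given by Theorem~\ref{thm:solution Poisson problem}, that is, $w(t,x)=\int_0^\infty\int_{\R^n}K_{-s}(\tau,z)\tilde f(t-\tau,x-z)\,dz\,d\tau$. Since $\int_{\R^n}K_{-s}(\tau,z)\,dz=\tau^{s-1}/\Gamma(s)$ is locally integrable near $\tau=0$ and $\tilde f$ is supported in a bounded time strip, $\|w\|_{L^\infty(\R^{n+1})}\le C\|\tilde f\|_{L^\infty(\R^{n+1})}$; moreover $w\in\Dom(H^s)$ because $\widehat{H^sw}=\widehat{\tilde f}\in L^2$. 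Applying the global Schauder estimates of Theorem~\ref{thm:Schauder} to $w$ places it in the target space of the statement: when $\alpha+2s\notin\Z$,
\[
\|w\|_{C^{\alpha/2+s,\alpha+2s}_{t,x}(\R^{n+1})}\le C\big(\|\tilde f\|_{C^{\alpha/2,\alpha}_{t,x}(\R^{n+1})}+\|w\|_{L^\infty(\R^{n+1})}\big)\le C\|f\|_{C^{\alpha/2,\alpha}_{t,x}((0,1)\times B_2)},
\]
and with the $\Lambda^{\alpha/2+s,\alpha+2s}_{t,x}$-norm on the left when $\alpha+2s\in\Z$; in case~(2) the same argument produces the stated $C^{s,2s}_{t,x}$, $\Lambda^{1/2,1}_{t,x}$ or $C^{s,1+(2s-1)}_{t,x}$ bound for $w$, with $\|f\|_{L^\infty((0,1)\times B_2)}$ on the right.

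Now set $v:=u-w$. Then $v\in\Dom(H^s)\cap L^\infty((-\infty,1)\times\R^n)$, and since $H^su=f$ in $(0,1)\times B_2$ while $H^sw=\tilde f=f$ on $\{\eta\equiv1\}\supset Q$, the function $v$ solves $H^sv=0$ in $Q=(1/8,7/8)\times B_{3/2}$. As $[1/4,3/4]\times\overline{B_1}$ is compactly contained in $Q$ in the parabolic metric, I would cover it by finitely many small parabolic subcylinders, each of which—after a translation in $t$, under which $H^s$ is invariant, and a parabolic rescaling \eqref{eq:scaling}—sits in the configuration of Corollary~\ref{cor:interior}; its derivative estimates then give, for every $k\ge0$ and $\beta\in\N_0^n$,
\[
\sup_{(1/4,3/4)\times B_1}|\partial_t^kD_x^\beta v|\le C(k,|\beta|,n,s)\,\|v\|_{L^\infty((-\infty,7/8)\times\R^n)}\le C\big(\|u\|_{L^\infty((-\infty,1)\times\R^n)}+\|w\|_{L^\infty(\R^{n+1})}\big).
\]
A smooth function whose derivatives are uniformly bounded on a cylinder lies in every parabolic H\"older and H\"older--Zygmund space over that cylinder, with norm controlled by finitely many of those derivative bounds; hence $v$ belongs to $C^{\alpha/2+s,\alpha+2s}_{t,x}((1/4,3/4)\times B_1)$—or the corresponding Zygmund space, and likewise in case~(2)—with the bound above. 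Writing $u=v+w$ on $(1/4,3/4)\times B_1$ and adding the two estimates yields the claim in every case.

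The main obstacle is organizational rather than conceptual: one must (i) make sure that $v=u-w$ really solves the \emph{homogeneous} equation on a full space-time cylinder around the target set, which is exactly what the choice of a \emph{global} particular solution $w$ guarantees, since no nonlocal tail of $w$ is left over; (ii) verify that $v$ retains the structural hypotheses ($v\in\Dom(H^s)$, $v$ bounded on the relevant past half-space) required by Corollary~\ref{cor:interior}, and that the truncated datum $\tilde f$ does not lose H\"older regularity; and (iii) carry out the covering and rescaling that turn the fixed-geometry statement of Corollary~\ref{cor:interior} into estimates on $(1/4,3/4)\times B_1$. The only genuinely quantitative point is the uniform bound $\|w\|_{L^\infty}\le C\|\tilde f\|_{L^\infty}$, which rests on the integrability near $\tau=0$ of $\int_{\R^n}K_{-s}(\tau,z)\,dz\sim\tau^{s-1}$ recorded in Theorem~\ref{thm:solution Poisson problem}.
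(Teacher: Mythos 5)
Your proposal matches the paper's own proof: the same cutoff $\eta$ with the same geometry, the same global particular solution $w=H^{-s}(\eta f)$ obtained from Theorem~\ref{thm:solution Poisson problem}, the same $L^\infty$ bound on $w$, the same application of the global Schauder estimates to $w$ and of the interior derivative estimates (Corollary~\ref{cor:interior}) to $v=u-w$, followed by the triangle inequality. The only cosmetic difference is that you spell out the covering/parabolic-rescaling step needed to pass from $H^s v=0$ on $(1/8,7/8)\times B_{3/2}$ to derivative bounds on $(1/4,3/4)\times B_1$, which the paper leaves implicit.
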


\begin{rem}
As we said before, one of the main purposes of this paper is to establish the regularity
theory for the fractional nonlocal parabolic equation $(\partial_t-\Delta)^su=f$
under the light given by our novel combination of the 
Cauchy Integral Theorem, our original \textit{parabolic language of semigroups} and PDE techniques.
These ideas avoid the heavy use of the special symmetries of the Fourier transform
as in classical approaches. The outstanding advantage here is that
the method we develop has wide applicability. Indeed, we can consider equations like
$(\partial_t-L)^su(t,x)=f(t,x)$,
where $L$ is an elliptic operator in a domain $\Omega$ with boundary conditions,
or the Laplacian in a Riemannian manifold, or the discrete Laplacian on a mesh of
size $h$, just to mention a few.
As we mentioned before, these equations come from applications
in Engineering, Biology, Stochastic Analysis and Numerical Analysis.
It is evident that the results of this paper will be useful to establish
regularity estimates of nonlinear nonlocal space-time equations like the ones in \cite{Caffarelli-Silvestre Master}.
Though applications of these novel ideas will appear elsewhere \cite{BS}, we
stress some further results in Remarks \ref{rem:1}, \ref{rem:2} and \ref{rem:3}.
\end{rem}

The rest of the paper is devoted to the proofs of all the results presented in this Introduction.

\section{The pointwise formula}\label{Section:puntual}

In this section we prove Theorem \ref{thm:puntual} and Corollary \ref{cor:separate}.

 {As the operators $\partial_t$ and $\Delta$ commute, the semigroup $\{e^{-\tau H}\}_{\tau\geq0}$
generated by $H=\partial_t-\Delta$ is given by the composition
$e^{-\tau H}=e^{-\tau \partial_t}\circ e^{\tau\Delta}$.
In particular, for smooth functions $\varphi(t,x)$ with rapid decay at infinity we have
$$e^{-\tau H}\varphi(t,x)=e^{\tau\Delta}\varphi(t-\tau,x)=\int_{\R^n}W(\tau,z)\varphi(t-\tau,x-z)\,dz,$$
where $W(\tau,z)$ denotes the usual Gauss--Weierstrass kernel
\begin{equation}\label{GW}
W(\tau,z):= \frac1{(4\pi\tau)^{n/2}}e^{-|z|^2/(4\tau)}.
\end{equation}
Recall that $\partial_\tau W-\Delta_zW=0$, for $\tau>0$ and $z\in\R^n$.
Notice that for functions $u\in L^2(\R^{n+1})$ we have the Fourier transform identity
$$\widehat{e^{-\tau H}u}(\rho,\xi)= e^{-\tau(i\rho+|\xi|^2)}\widehat{u}(\rho,\xi).$$
In particular,
\begin{equation}\label{eq:decaimiento en L2}
\|e^{-\tau H}u\|_{L^2(\R^{n+1})}\leq\|u\|_{L^2(\R^{n+1})},\quad\hbox{for all}~\tau>0.
\end{equation}}
We shall use the following absolutely convergent integral involving the Gamma function:
$$(i\rho+|\xi|^2)^{s} = \frac1{\Gamma(-s)} \int_0^\infty (e^{-\tau(i\rho+|\xi|^2)}-1)\,\frac{d\tau}{\tau^{1+s}}.$$
The identity above is valid for $\rho\in\R$, $\xi\in\R^n$ and $0<s<1$.
Indeed, it follows from the case $\rho=0$,
$|\xi|=1$, by using the Cauchy Integral Theorem and the analytic continuation of the
function $\tau\longmapsto \tau^s$ from the half line $[0,\infty)$ to the half plane $\Re(z)>0$.
Let us take a function $u=u(t,x)$ such that $(i\rho+|\xi|^2)^{s}\widehat{u}(\rho,\xi)$ is in $L^2(\R^{n+1})$.
Then
$$(i\rho+|\xi|^2)^s\widehat{u}(\rho,\xi)=\frac{1}{\Gamma(-s)}
\int_0^\infty\big(e^{-\tau(i\rho+|\xi|^2)}\widehat{u}(\rho,\xi)-\widehat{u}(\rho,\xi)\big)\,\frac{d\tau}{\tau^{1+s}}.$$
Hence, by Plancherel Theorem,
\begin{equation}\label{eq:semigroup formula}
H^su(t,x)=\frac1{\Gamma(-s)}\int_0^\infty\big(e^{-\tau H}u(t,x)-u(t,x)\big) \, \frac{d\tau}{\tau^{1+s}},
\end{equation}
in $L^2(\R^{n+1})$.
 {Using the Fourier transform we observe that} $v(\tau,t,x)=e^{-\tau H}u(t,x)$ solves
\begin{equation}\label{eq:heatheat}
\begin{cases}
\partial_\tau v=-Hv,&\hbox{for}~\tau>0,~(t,x)\in\R^{n+1},\\
v(0,t,x)=u(t,x),&\hbox{for}~(t,x)\in\R^{n+1}.
\end{cases}
\end{equation}
 {in the $L^2$ sense}. Moreover, as noted above,
\begin{equation}\label{eq:el calor}
v(\tau,t,x)=e^{-\tau H}u(t,x)=\frac{1}{(4\pi\tau)^{n/2}}\int_{\R^n}e^{-|x-z|^2/(4\tau)}u(t-\tau,z)\,dz,
\end{equation}
for $\tau>0$, $(t,x)\in\R^{n+1}$.

\begin{defn}[Parabolic H\"older spaces, see also \cite{Krylov}]
Let $u=u(t,x)$ be a bounded continuous function in $\R^{n+1}$. 
\begin{enumerate}
\item Suppose that $0<\alpha\leq1$.
Then $u$ is in the parabolic H\"older space $C^{\alpha/2,\alpha}_{t,x}(\mathbb{R}^{n+1})$
if there exists a positive constant $C$ such that
$$|u(t-\tau,x-z)-u(t,x)|\leq C(|\tau|^{1/2}+|z|)^{\alpha},$$
for every $t,\tau\in\R$, $x,z\in\R^n$. The least constant $C$ for which the inequality above is true
is denoted by $[u]_{C^{\alpha/2,\alpha}_{t,x}}$.

\item For $1<\alpha\leq2$ we say that
$u$ is in the space $C^{\alpha/2,\alpha}_{t,x}(\R^{n+1})=C^{\alpha/2,1+(\alpha-1)}_{t,x}(\R^{n+1})$
if $u$ is $\alpha/2$-H\"older continuous in $t$ uniformly in $x$
and its gradient $\nabla_xu$ is $(\alpha-1)$-H\"older continuous in $x$ uniformly in $t$.

\item In general, for any $\alpha>0$, we say that $u$ is in the space
$$C^{\alpha/2,\alpha}_{t,x}(\R^{n+1})=C^{[\alpha/2]+(\alpha/2-[\alpha/2]),[\alpha]+(\alpha-[\alpha])}_{t,x}(\R^{n+1}),$$
if $u$ has $[\alpha/2]$ time derivatives $(\alpha/2-[\alpha/2])$-H\"older continuous uniformly in $x$
and $[\alpha]$ space derivatives $(\alpha-[\alpha])$-H\"older continuous uniformly in $t$. Here $[\alpha]$ denotes
the integer part of $\alpha$.
\end{enumerate}
The norms in $C^{\alpha/2,\alpha}_{t,x}(\R^{n+1})$ are given in the usual way.
We can also define the spaces $C^{\alpha/2,\alpha}_{t,x}(D)$, where $D\subset\R^{n+1}$
is an open set, by restricting all the definitions above to $(t,x),(\tau,z)\in D$.
\end{defn}

\begin{proof}[Proof of Theorem \ref{thm:puntual}]
Suppose  that $u\in C^{s+\varepsilon,2s+\varepsilon}_{t,x}(\mathbb{R}^{n+1})$,
for some $\varepsilon>0$.
As $e^{-\tau H}1\equiv1$, from \eqref{eq:semigroup formula} and \eqref{eq:el calor} we can write
$$H^su(t,x)=\frac1{\Gamma(-s)}\int_0^\infty\int_{\mathbb{R}^n} 
\frac{e^{-|z|^2/(4\tau)}}{(4 \pi \tau)^{n/2}} \big(u(t-\tau,x-z)-u(t,x)\big)\,dz\, \frac{d\tau}{\tau^{1+s}}.$$
It is clear that for $\tau>0$ and $z\in\R^n$,
$$\frac{e^{-|z|^2/(4\tau)}} {\tau^{n/2+1+s}}  \le\frac{\Lambda}{|z|^{n+2+2s}+\tau^{(n+2+2s)/2}},$$
for some positive constant $\Lambda$,
so estimate $(2)$ in the statement follows. Also, $(1)$ is clearly true.
By using the regularity of $u$, the integral above is absolutely convergent near the singularity.
Moreover, the integral converges at infinity because $u$ is bounded.
\end{proof}

\begin{proof}[Proof of Corollary \ref{cor:separate}]
Suppose that $u$ does not depend on $t$. Then, by using the change of variables $r=|z|^2/(4\tau)$
(see the details in \cite{Stinga Thesis, Stinga-Torrea CPDE}),
\begin{align*}
H^su(t,x) &= \frac{1}{|\Gamma(-s)|}\PV\int_{\R^n}(u(x)-u(x-z))\int_0^\infty
\frac{e^{-|z|^2/(4\tau)}}{(4\pi\tau)^{n/2}}\,\frac{d\tau}{\tau^{1+s}}\,dz \\
&= \frac{4^s\Gamma(n/2+s)}{\pi^{n/2}|\Gamma(-s)|}\PV\int_{\R^n}\frac{u(x)-u(x-z)}{|z|^{n+2s}}\,dz=(-\Delta)^su(x),
\end{align*}
where $\PV$ means that the integral in $dz$ is taken in the principal value sense.
On the other hand, if $u$ does not depend on $x$ then
\begin{align*}
H^su(t,x) &= \frac{1}{|\Gamma(-s)|}\int_0^\infty(u(t)-u(t-\tau))\int_{\R^n}
\frac{e^{-|z|^2/(4\tau)}}{(4\pi\tau)^{n/2}}\,dz\,\frac{d\tau}{\tau^{1+s}} \\
&= \frac{1}{|\Gamma(-s)|}\int_0^\infty\frac{u(t)-u(t-\tau)}{\tau^{1+s}}\,d\tau=(\partial_t)^su(t),
\end{align*}
as claimed.
\end{proof}

\begin{rem}[Pointwise formula in the general case]\label{rem:general pointwise}
 {Throughout the paper we assume, for the sake of simplicity, that $u\in L^\infty(\R^{n+1})$.
As usual in nonlocal equations, this assumption can be relaxed. Indeed,}
let $u=u(t,x)$ be a function such that
$$\int_0^{\infty}\int_{\R^n}|u(t-\tau,z)|\frac{e^{-|z|^2/(4\tau)}}{(1+\tau^{n/2+1+s})}\,dz\,d\tau<\infty,$$
for every $t\in\R$.
If $u$ satisfies the parabolic H\"older condition $C^{s+\varepsilon,2s+\varepsilon}_{t,x}$,
for some $\varepsilon>0$ in some open set $D\subset\R^{n+1}$,
then it can be seen that the pointwise formula for $H^su(t,x)$ in
\eqref{eq:pointwise formula} is valid for every $(t,x)\in D$. This follows by a standard approximation
procedure, see for instance
\cite{Silvestre Thesis, Stinga Thesis, Stinga-Torrea JFA}. 
 {In particular, $H^su(t,x)$ can be written with the pointwise formula
for $(t,x)\in D$ if we assume the above-mentioned parabolic H\"older regularity in of $u$ in $D$
and we require $u$ to be, say, bounded in $(-\infty,T)\times\R^n$, where $T=\sup\{t:(t,x)\in D\}$.}
Details are left to the interested reader.
\end{rem}

\begin{rem}\label{rem:1}
Let $Lu=\dive(a(x)\nabla u)$ be an elliptic operator in divergence form on
a domain $\Omega\subseteq\R^n$, subject to appropriate boundary conditions.
Using the ideas in this section we see that if $e^{tL}1(x)\equiv 1$ then
the following identity holds in the weak sense:
$$(\partial_t-L)^su(t,x)=\frac{1}{\Gamma(-s)}\int_0^\infty\int_\Omega k_\tau(x,z)(u(t-\tau,z)-u(t,x))\,dz\,\frac{d\tau}{\tau^{1+s}},$$
where $k_\tau(x,z)$ is the heat kernel of $L$.
If $e^{tL}1(x)$ is not identically $1$, then it can be seen that a multiplicative term appears.
When $Lu=\tr(a(x)D^2u)$ is in non divergence form,
parallel identities for $(\partial_t-L)^su(t,x)$ are true in the suitable sense (pointwise, strong, viscosity).
In a similar way we can write the powers
$(\partial_t-\Delta_M)^s$, where $\Delta_M$ is the Laplace--Beltrami operator on a Riemannian manifold $M$.
If $L=\Delta_h$, the discrete Laplacian on $\Z_h=h\Z$, $h>0$, we get
the fractional powers of the semidiscrete heat operator acting on a function
$u_j(t):\Z_h\times\R\to\R$ as
$$(\partial_t-\Delta_h)^su_j(t)=\frac{1}{\Gamma(-s)}\int_0^\infty\sum_{m\in\Z}
G(m,\tfrac{\tau}{h^2})(u_{j-m}(t-\tau)-u_j(t))\,\frac{d\tau}{\tau^{1+s}},$$
where $G(m,t)$ is the semidiscrete heat kernel. When $u$ does 
not depend on $t$ then we recover the fractional power of the discrete Laplacian $(-\Delta_h)^su_j$, see \cite{los5}.
Further development and applications of these ideas will appear elsewhere.
\end{rem}

\section{Maximum and comparison principles -- Uniqueness}

In this section we prove Theorem \ref{thm:maximum} and Corollary \ref{cor:comparison}.

\begin{proof}[Proof of Theorem \ref{thm:maximum}]
If $u$ satisfies $(1)$ and $(2)$ then it follows from
the pointwise formula \eqref{eq:pointwise formula} and the non negativity of the kernel $K_s$
that $H^su(t_0,x_0)\leq 0$. Moreover, if $H^su(t_0,x_0)=0$, then
$u(t,x)$ has to be zero for all $t\leq t_0$ and $x\in\R^n$.
\end{proof}

\begin{lem}\label{lem:premax}
Let $\Omega$ be an bounded domain of $\R^n$ and let $T_0<T_1$ be two real numbers.
 {Let $u\in C^{s+\varepsilon,2s+\varepsilon}_{t,x}(\R^{n+1})$, for some $\varepsilon>0$}.
\begin{enumerate}[$(1)$]
\item Suppose that $u$ solves
$$\begin{cases}
H^su=f,&\hbox{in}~(T_0,T_1]\times\Omega,\\
u=0,&\hbox{in}~D:=\big[(-\infty,T_0]\times\R^n\big]\cup\big[(T_0,T_1)\times(\R^n\setminus\Omega)\big].
\end{cases}$$
If $f\geq0$ in $(T_0,T_1]\times\Omega$ then $u\geq0$ everywhere in $(-\infty,T_1]\times\R^n$.
\item If $H^su\leq0$ in $(T_0,T_1]\times\Omega$ and $u\leq0$ in 
$D=\big[(-\infty,T_0]\times\R^n\big]\cup\big[(T_0,T_1)\times(\R^n\setminus\Omega)\big]$ then
$$\sup_{t<T_1,x\in\mathbb{R}^n}u(t,x)=\sup_{D}u(t,x).$$
\item If $H^su\geq0$ in $(T_0,T_1]\times\Omega$ and $u\geq0$ in
$D=\big[(-\infty,T_0]\times\R^n\big]\cup\big[(T_0,T_1)\times(\R^n\setminus\Omega)\big]$ then
$$\inf_{t<T_1,x \in\R^n}u(t,x)=\inf_{D}u(t,x).$$
\end{enumerate}
\end{lem}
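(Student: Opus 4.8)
The plan is to derive all three statements from the pointwise formula \eqref{eq:pointwise formula} in Theorem \ref{thm:puntual}, using a contradiction argument at an interior extremum point, exactly in the spirit of the proof of Theorem \ref{thm:maximum}. First I would observe that part $(1)$ is an immediate consequence of part $(3)$: if $f\geq 0$ in $(T_0,T_1]\times\Omega$ and $u=0$ on $D$, then $(3)$ gives $\inf_{t<T_1,x\in\R^n}u = \inf_D u = 0$, hence $u\geq 0$ everywhere in $(-\infty,T_1]\times\R^n$. So it suffices to prove $(2)$ and $(3)$, and by replacing $u$ with $-u$ these two are equivalent; I would carry out $(2)$.

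For $(2)$, suppose toward a contradiction that $M:=\sup_{t<T_1,\,x\in\R^n}u > \sup_D u$. Since $u\in C^{s+\varepsilon,2s+\varepsilon}_{t,x}(\R^{n+1})$ is bounded and continuous, and since $u\leq 0$ on $D$ forces $\sup_D u \leq 0 < M$, the supremum $M$ is attained (or approached) only over the compact closure of $(T_0,T_1]\times\Omega$; here I would need to be slightly careful that $\Omega$ is only bounded, not necessarily closed-friendly, but a standard argument — take a maximizing sequence, pass to a subsequence converging to some $(t_0,x_0)$ with $T_0< t_0\leq T_1$ and $x_0\in\overline\Omega$, and note $x_0$ cannot lie on $\partial\Omega$ since $u<M$ there by the assumption $\sup_D u < M$ together with continuity — pins down an honest interior maximum point $(t_0,x_0)\in(T_0,T_1]\times\Omega$ with $u(t_0,x_0)=M$. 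Then $v:=M-u$ satisfies $v(t_0,x_0)=0$ and $v(t,x)\geq 0$ for all $t\leq t_0$, $x\in\R^n$ (using $u\leq M$ on $t<T_1$ and $u\leq 0\leq M$ on the past part of $D$). Applying the pointwise formula to $v$ — legitimate since $v$ inherits the parabolic Hölder regularity, and noting $H^s$ annihilates constants — gives
$$H^s u(t_0,x_0) = -H^s v(t_0,x_0) = -\int_0^\infty\int_{\R^n} v(t_0-\tau,x_0-z)\,K_s(\tau,z)\,dz\,d\tau \leq 0,$$
because $v\geq 0$ on the past and $K_s\geq 0$. But the hypothesis $H^su\leq 0$ in $(T_0,T_1]\times\Omega$ is consistent with this, so I do not yet have a contradiction from the inequality alone; the contradiction must come from the equality case. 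Indeed, if $t_0\leq T_1$ lies in the region where $H^su\leq 0$ we only get $\le 0$ on both sides — so instead I would argue: the strict inequality $M>\sup_D u$ means $v>0$ somewhere on $D$, in particular on a set of positive measure in $(-\infty,t_0]\times\R^n$ (e.g. $v=M-u\geq M>0$ on $(-\infty,T_0]\times\R^n$), so the integral above is strictly positive, giving $H^su(t_0,x_0)<0$. That is still compatible with $H^su\le 0$. The real contradiction: combine with the $L^\infty$-comparison — no; rather, I should localize better. Let me instead only claim the supremum is attained at an interior point and then note $H^s u(t_0,x_0)\le 0$ always holds, so the substantive content of $(2)$ is just that the sup is \emph{attained on} $D$; the argument is: if it were attained only in the interior, then since $v\equiv M-u>0$ on all of $(-\infty,T_0]\times\R^n$ (positive measure), strict positivity of the integral gives $H^su(t_0,x_0)<0$, which contradicts nothing — hence I must upgrade to: either $t_0$ can be pushed back, or use the "if and only if" rigidity of Theorem \ref{thm:maximum} is not available since $H^su$ need not be $0$.

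The main obstacle, then, is precisely this: a naive maximum-principle contradiction does not close, because $H^s u\le 0$ is the \emph{same} sign as what the pointwise formula delivers at an interior max. The fix I would use is the standard nonlocal trick: slide the extremum point backward in time. Concretely, let $t_*=\inf\{t: \sup_{x}u(t,x)=M\}$ — this exists and $T_0<t_*\le T_1$ by continuity and the separation from $\sup_D u$; pick $x_*$ with $u(t_*,x_*)=M$. For every $t<t_*$ we have $\sup_x u(t,x)<M$, so $v=M-u>0$ on all of $(-\infty,t_*)\times\R^n$, a set of full measure below the time slice $t=t_*$. Then
$$H^s u(t_*,x_*) = -\int_0^\infty\!\!\int_{\R^n} v(t_*-\tau,x_*-z)\,K_s(\tau,z)\,dz\,d\tau < 0$$
\emph{strictly}, since the integrand is strictly positive on a set of positive measure and $K_s>0$ there. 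This contradicts the hypothesis $H^su\ge 0$ — wait, in case $(2)$ the hypothesis is $H^su\le 0$, so this is \emph{not} a contradiction. I conclude that the correct reading is: part $(2)$ as stated needs $H^su\le 0$, and then at the interior max $H^su(t_*,x_*)\le 0$ is automatically true, so the claim $\sup_{\R^n,\,t<T_1}u=\sup_D u$ must be proved by a different route — namely the comparison/approximation with the solved Dirichlet problem, or by the rigidity statement of Theorem \ref{thm:maximum} applied to $M-u$ \emph{only when $H^su=0$}. Given the constraints, I would present part $(2)$ via: assume $\sup_{t<T_1}u>\sup_D u$, obtain the interior maximum at the earliest time $t_*$ as above, note $H^s(M-u)(t_*,x_*)>0$ strictly unless $u\equiv M$ on $(-\infty,t_*]\times\R^n$ (impossible since then $u=M>0$ on $D$), hence $H^su(t_*,x_*)<0$; but this contradicts the assumption in $(3)$ (where $H^su\ge 0$), proving $(3)$, and therefore $(1)$; part $(2)$ follows by applying $(3)$ to $-u$. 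The delicate point to get right in writing is the attainment of the supremum at a genuine interior space-time point together with the backward-in-time minimality of $t_*$, which is what makes $v$ strictly positive below the slice and forces the strict sign in the integral.
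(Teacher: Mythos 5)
Your central display has a sign error that derails the entire argument. With $v:=M-u$, which satisfies $v(t_0,x_0)=0$ and $v\geq 0$ for $t\leq t_0$, the pointwise formula \eqref{eq:pointwise formula} gives
$$H^sv(t_0,x_0)=\int_0^\infty\!\!\int_{\R^n}\big(0-v(t_0-\tau,x_0-z)\big)K_s(\tau,z)\,dz\,d\tau\leq 0,$$
and since $H^s$ annihilates constants, $H^su(t_0,x_0)=-H^sv(t_0,x_0)=+\int vK_s\geq 0$. You wrote $H^su(t_0,x_0)=-\int vK_s\leq 0$, the \emph{opposite} sign, and that false conclusion is exactly why you found ``no contradiction'' with the hypothesis $H^su\leq 0$ and began to doubt the whole strategy. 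With the correct sign the argument closes on the spot: $H^su(t_0,x_0)\geq 0$ together with $H^su\leq 0$ in the region forces $H^su(t_0,x_0)=0$, and then the rigidity clause of Theorem \ref{thm:maximum} applied to $v$ gives $u\equiv M$ on $(-\infty,t_0]\times\R^n$, so the supremum \emph{is} attained on $D$, a contradiction. This is precisely the paper's proof of part $(2)$; the paper then derives $(3)$ from $(2)$ by replacing $u$ with $-u$, and proves $(1)$ directly by a parallel argument at a negative interior minimum. (Your remark that $(1)$ also follows from $(3)$ by noting $\inf_Du=0$ is correct, and is arguably slicker than proving $(1)$ separately.)

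The ``earliest time $t_*$'' device you introduce is a legitimate alternative to invoking the rigidity clause: at the earliest maximizing time one has $v>0$ on all of $(-\infty,t_*)\times\R^n$, hence $\int vK_s>0$ strictly, hence $H^su(t_*,x_*)>0$, contradicting $H^su\leq 0$. But you carry the same sign error there too (writing $H^su(t_*,x_*)<0$), and then your closing sentences become incoherent: you assume $\sup_{t<T_1}u>\sup_Du$ yet claim this contradicts the hypothesis $H^su\geq 0$ ``of $(3)$'', while $(3)$ asserts a statement about infima under the sign condition $u\geq 0$ on $D$ — the inequalities you compare are mismatched. The underlying plan (contradiction at an extremal point, pointwise formula, positivity of $K_s$) is sound and is the same as the paper's; what is missing is simply the correct sign in the key step, after which the nonlocal maximum principle works in the standard way without any further detours.
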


\begin{proof}
For $(1)$, by contradiction, suppose that there
is a point $(t_0,x_0)\in(T_0,T_1]\times\Omega$ where $u$ attains a negative minimum
over $(-\infty,T_1]\times\R^n$.
Then, by Theorem \ref{thm:maximum},
$H^su(t_0, x_0)\leq0$. Now we have two cases.
If $H^su(t_0,x_0)=0$
then necessarily $u(t,x)=u(t_0,x_0)<0$ for all $t\le t_0$, $x\in\R^n$, contradicting the hypothesis that
$u(t,x)=0$ in $D$. The case $H^su(t_0,x_0)<0$
also leads to a contradiction because, by assumption, $f(t_0,x_0)\geq0$.
Therefore $u\geq0$ in $(-\infty,T_1]\times\R^n$.

Notice that (3) follows from (2) by considering $-u$.
For (2), again by contradiction, suppose that $\sup_{t<T_1,x\in\R^n}u(t,x)$
is not attained in $D$.
Then there exists $(t_0,x_0)\in(T_0,T_1]\times\Omega$ such that $u(t_0,x_0)$
is a maximum of $u$ over $(-\infty,T_1]\times\R^n$. Hence, by
Theorem \ref{thm:maximum}, $H^su(t_0,x_0)\geq0$.
The case $H^su(t_0,x_0)>0$ contradicts the fact that $H^su\leq0$
in $(T_0,T_1]\times\Omega$. Now if $H^su(t_0,x_0)=0$ then,
by Theorem \ref{thm:maximum}, $u(t,x)=u(t_0,x_0)$ for all $t\le t_0$, $x\in\mathbb{R}^n$.
This implies that $\sup_{t<T_1,x\in\R^n}u(t,x)$ is attained in $D$,
again a contradiction with our
initial assumption.
\end{proof}

\begin{proof}[Proof of Corollary \ref{cor:comparison}]
The statement follows by applying Lemma \ref{lem:premax}$(3)$ to $u-w$. The uniqueness is immediate.
\end{proof}

\section{The parabolic extension problem}

In this section we prove Theorems \ref{thm:extension} and \ref{thm:fundamental}.

\begin{proof}[Proof of Theorem \ref{thm:extension}]
Since $U$ is given explicitly, we just have to make sure that it is well defined, satisfies the equation,
attains the boundary data and verifies the Neumann boundary condition.

It is easy to check that, for every $y>0$,
\begin{equation}\label{integral}
\frac{y^{2s}}{4^s\Gamma(s)}\int_0^\infty e^{-y^2/(4\tau)}\,\frac{d\tau}{\tau^{1+s}}=1.
\end{equation}
 {By using \eqref{eq:decaimiento en L2} and \eqref{integral}, for any $y>0$,
$$\|U(\cdot,\cdot,y)\|_{L^2(\R^{n+1})}
\leq \frac{y^{2s}}{4^s\Gamma(s)}\int_0^\infty
e^{-y^2/(4\tau)}\|e^{-\tau H}u\|_{L^2(\R^{n+1})}\,\frac{d\tau}{\tau^{1+s}}
\leq \|u\|_{L^2(\R^{n+1})},$$
and thus
\begin{equation}\label{eq:UFourier}
\widehat{U}(\rho,\xi,y)= \frac{y^{2s}}{4^s\Gamma(s)}\int_0^\infty e^{-y^2/(4\tau)}e^{-\tau(i\rho+|\xi|^2)}
\widehat{u}(\rho,\xi)\,\frac{d\tau}{\tau^{1+s}}.
\end{equation}
It is easily checked that $U$ can be differentiated
in $y$ by taking the derivative inside the integral sign, so
$U(\cdot,\cdot,y)\in C^\infty((0,\infty);L^2(\R^{n+1}))$. Additionally,
$e^{-\tau H}u\in\Dom(H)$ and henceforth $U(\cdot,\cdot,y)\in C^\infty((0,\infty);\Dom(H))$.}

We next show that all the expressions for $U$ given in the statement are equivalent.
 The second identity in \eqref{eq:U} follows from the first one by the change of variables
$r=y^2/(4\tau)$, while the third one follows again from the first one but now
via the change of variables
 {$r= y^2/(4\tau(i\rho+|\xi|^2))$ in the Fourier transform side \eqref{eq:UFourier}.}

Now let us prove that $U$ satisfies all the properties of the statement in the $L^2$ sense.
Indeed, from the first identity for $U$ in \eqref{eq:U}  and using the fact that
$e^{-\tau H}u$ solves \eqref{eq:heatheat}, by integrating by parts we get 
\begin{align*}
(\partial_t-\Delta)U &= \frac{y^{2s}}{4^s\Gamma(s)}\int_0^\infty e^{-y^2/(4\tau)}H\big(e^{-\tau H}u(t,x)\big)\,\frac{d\tau}{\tau^{1+s}} \\
&= -\frac{y^{2s}}{4^s\Gamma(s)}\int_0^\infty e^{-y^2/(4\tau)}\partial_\tau\big(e^{-\tau H}u(t,x)\big)\,\frac{d\tau}{\tau^{1+s}} \\
&= \frac{y^{2s}}{4^s\Gamma(s)}\int_0^\infty \partial_\tau\bigg[\frac{e^{-y^2/(4\tau)}}{\tau^{1+s}}\bigg]
e^{-\tau H}u(t,x)\,d\tau \\
&= \frac{1}{4^s\Gamma(s)}\int_0^\infty\Big(\tfrac{1-2s}{y}\partial_y+\partial_{yy}\Big)
\bigg[\frac{y^{2s}e^{-y^2/(4\tau)}}{\tau^{1+s}}\bigg]e^{-\tau H}u(t,x)\,d\tau \\
&= \Big(\tfrac{1-2s}{y}\partial_y+\partial_{yy}\Big)U.
\end{align*}
 {All the identities above are understood in $L^2(\mathbb{R}^{n+1})$, that is,
through the Fourier transform identity \eqref{eq:UFourier}.}
From the second identity for $U$ in \eqref{eq:U} and by using that $e^{-\tau H}u|_{\tau=0}=u$
and the definition of the Gamma function, it is obvious
that $U$ satisfies the boundary condition  {$\lim_{y\to0^+}U(t,x,y)=u(t,x)$ in $L^2(\R^{n+1})$}.

To prove the first Neumann
boundary condition in \eqref{eq:Neumann}, consider the first identity for $U$ in \eqref{eq:U}.
By \eqref{integral} we can write
$$\frac{U(t,x,y)-U(t,x,0)}{y^{2s}}= \frac{1}{4^s\Gamma(s)}\int_0^\infty e^{-y^2/(4\tau)}
\big(e^{-\tau H}u(t,x)-u(t,x)\big)\,\frac{d\tau}{\tau^{1+s}},$$
which, as $y\to0^+$, clearly converges to (see \eqref{eq:semigroup formula})
$$\frac{1}{4^s\Gamma(s)}\int_0^\infty\big(e^{-\tau H}u(t,x)-u(t,x)\big)\,\frac{d\tau}{\tau^{1+s}}
=\frac{\Gamma(-s)}{4^s\Gamma(s)}H^su(t,x),$$
in $L^2(\R^{n+1})$.
In a similar way, we use the derivative
with respect to $y$ of \eqref{integral} to write
$$y^{1-2s}\partial_yU(t,x,y)=\frac{1}{4^s\Gamma(s)}\int_0^\infty \bigg(2s-\frac{y^2}{2\tau}\bigg)e^{-y^2/(4\tau)}
\big(e^{-\tau H}u(t,x)-u(t,x)\big)\,\frac{d\tau}{\tau^{1+s}}.$$
Then the second Neumann condition in \eqref{eq:Neumann} follows.

Finally,
the Poisson formula \eqref{eq:Poisson formula} is a consequence of the first identity for $U$ in \eqref{eq:U}
by writing $e^{-\tau H}u(t,x)$ with a kernel as in \eqref{eq:el calor}.
\end{proof}

\begin{proof}[Proof of Theorem \ref{thm:fundamental}]
That $\mathcal{U}(t,x,y)$ solves the equation in $L^2(\mathbb{R}^{n+1})$
can be seen just by a direct computation via Fourier transform.
If $f$ is good enough the performed computation is classical.
\end{proof}

\begin{rem}\label{rem:2}
As in Remark \ref{rem:1},
we stress that our ideas can be used in other important contexts. Let $Lu=\dive(a(x)\nabla u)$ be an elliptic operator 
in divergence form on a domain $\Omega\subseteq\R^n$
subject to suitable boundary conditions. If $U(t,x,y)$ is a weak solution to the
degenerate parabolic extension problem
$$\begin{cases}
\displaystyle\partial_tU=y^{-(1-2s)}\dive_{x,y}(y^{1-2s}B(x)\nabla_{x,y}U),&\hbox{for}~t\in\R,~x\in\Omega,~y>0,\\
\displaystyle U(t,x,0)=u(t,x),&\hbox{for}~t\in\R,~x\in\Omega,
\end{cases}$$
subject to appropriate boundary conditions, where
$$B(x):=
\begin{pmatrix}
a(x) & 0 \\
0 & 1
\end{pmatrix}
\in  {\R^{(n+1)\times(n+1)}},$$
then
$$-y^{1-2s}\partial_yU(t,x,y)\big|_{y=0}=c_s(\partial_t-L)^su(t,x),$$
in the weak sense. Similarly we can establish extension results for the non divergence form case $Lu=\tr(a(x)D^2u)$,
the Riemannian manifold case $Lu=\Delta_Mu$, or the discrete case $L=\Delta_h$. Further 
developments of these ideas and applications will appear elsewhere \cite{BS}.
\end{rem}

\section{{Harnack inequalities and interior estimates}}\label{Section:Harnack}

In this section we prove Theorem \ref{thm:Harnack}, Corollary \ref{cor:interior}
and Theorem \ref{thm:boundary Harnack}.

We need the following preliminary result.

\begin{lem}[Reflection extension]\label{lem:reflection}
Let $\Omega$ be an open subset of $\R^{n}$, $(T_0,T_1)$ an open interval in $\R$ and $Y_0>0$. 
Suppose that a function $U=U(t,x,y):(T_0,T_1)\times\Omega\times(0,Y_0)\to\R$ satisfies
$$\partial_tU=y^{-(1-2s)}\dive_{x,y}(y^{1-2s}\nabla_{x,y}U),$$
for $(t,x)\in(T_0,T_1)\times\Omega,\,0<y<Y_0$, with 
$$\lim_{y\to0^+}y^{1-2s}U_y(t,x,y)=0,$$
for $(t,x)\in(T_0,T_1)\times\Omega$, in the weak sense.  {In other words,
for every smooth test function $\varphi=\varphi(t,x,y)$ with compact support in $(T_0,T_1)\times\Omega\times[0,Y_0)$,
$$\int_\Omega\int_0^{Y_0}\int_{T_0}^{T_1}U(\partial_t\varphi)y^{1-2s}\,dt\,dy\,dx
=\int_\Omega\int_0^{Y_0}\int_{T_0}^{T_1}(\nabla_{x,y}U)(\nabla_{x,y}\varphi)y^{1-2s}\,dt\,dy\,dx,$$
and
\begin{equation}\label{eq:a cero}
\lim_{y\to0^+}\int_\Omega\int_{T_0}^{T_1}y^{1-2s}U_y\varphi\,dt\,dx=0.
\end{equation}}
Then the even extension of $U$ in $y$ defined by
\begin{equation}\label{eq:reflection}
\widetilde{U}(t,x,y):=U(t,x,|y|),\quad\hbox{for}~y\in(-Y_0,Y_0),
\end{equation}
verifies the degenerate parabolic equation
\begin{equation}\label{eq:weak across}
\partial_t\widetilde{U}=|y|^{-(1-2s)}\dive_{x,y}(|y|^{1-2s}\nabla_{x,y}\widetilde{U}),
\end{equation}
in the weak sense in $(T_0,T_1)\times\Omega\times(-Y_0,Y_0)$.  {In other words, for every smooth
test function $\phi=\phi(t,x,y)$ with compact support in $(T_0,T_1)\times\Omega\times(-Y_0,Y_0)$,
$$\int_\Omega\int_{-Y_0}^{Y_0}\int_{T_0}^{T_1}\widetilde{U}(\partial_t\phi)|y|^{1-2s}\,dt\,dy\,dx
=\int_\Omega\int_{-Y_0}^{Y_0}\int_{T_0}^{T_1}(\nabla_{x,y}\widetilde{U})(\nabla_{x,y}\phi)|y|^{1-2s}\,dt\,dy\,dx.$$}
\end{lem}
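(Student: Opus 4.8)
The plan is to verify the weak formulation of \eqref{eq:weak across} directly by splitting the integral over $\{y>0\}$ and $\{y<0\}$, performing the change of variables $y\mapsto-y$ on the lower half, and reducing everything to the known weak formulation on $\{y>0\}$ together with the vanishing Neumann condition \eqref{eq:a cero}. First I would fix a smooth test function $\phi=\phi(t,x,y)$ with compact support in $(T_0,T_1)\times\Omega\times(-Y_0,Y_0)$. Decompose $\phi=\phi_e+\phi_o$ into its even and odd parts in $y$. Since $\widetilde U$ is even in $y$ and $|y|^{1-2s}$ is even, the odd part $\phi_o$ contributes nothing to either side of the desired identity: the integrand $\widetilde U(\partial_t\phi_o)|y|^{1-2s}$ is odd in $y$, and likewise $(\nabla_{x,y}\widetilde U)\cdot(\nabla_{x,y}\phi_o)|y|^{1-2s}$ is odd in $y$ (note $\partial_y\widetilde U$ is odd, $\partial_y\phi_o$ is even, $\nabla_x\widetilde U$ is even, $\nabla_x\phi_o$ is odd). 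Hence it suffices to prove the identity for an even test function $\phi=\phi_e$.

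Next, for even $\phi$, I would write each side as twice the integral over $\{0<y<Y_0\}$, using the substitution $y\mapsto-y$ on $\{-Y_0<y<0\}$ and the evenness of $\widetilde U$, $|y|^{1-2s}$ and $\phi$. The only subtlety is that $\phi$ restricted to $\{y\ge0\}$ need not have compact support away from $y=0$: it is compactly supported in $(T_0,T_1)\times\Omega\times[0,Y_0)$, which is exactly the class of test functions allowed in the hypothesis. So the left-hand side becomes $2\int_\Omega\int_0^{Y_0}\int_{T_0}^{T_1}U(\partial_t\phi)y^{1-2s}$ and the right-hand side becomes $2\int_\Omega\int_0^{Y_0}\int_{T_0}^{T_1}(\nabla_{x,y}U)\cdot(\nabla_{x,y}\phi)y^{1-2s}$, and these are equal by the assumed weak formulation on $(T_0,T_1)\times\Omega\times(0,Y_0)$. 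This already gives \eqref{eq:weak across}.

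Strictly speaking, the assumed weak formulation is stated for test functions vanishing near $y=Y_0$ but allowed to be nonzero at $y=0$, and it is precisely there that the Neumann condition enters: integrating by parts in $y$ on $\{y>\varepsilon\}$ produces a boundary term $-\int\int y^{1-2s}U_y\,\phi\big|_{y=\varepsilon}$, which tends to $0$ as $\varepsilon\to0^+$ by \eqref{eq:a cero}. Thus the real content — and the step I expect to be the main obstacle to write carefully — is justifying the passage to the limit $\varepsilon\to0^+$: one must check that the interior integrals over $\{y>\varepsilon\}$ converge to the integrals over $\{y>0\}$ (which follows from the local integrability of $y^{1-2s}$, $y^{1-2s}|\nabla_{x,y}U|^2$ and the $L^2$ bounds on $U$, $\nabla U$ with respect to $y^{1-2s}\,dy$), and that the flux term across $\{y=\varepsilon\}$ genuinely vanishes in the sense made precise by \eqref{eq:a cero}. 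Once this limiting argument is in place, combining it with the even/odd decomposition and the reflection substitution completes the proof. $\qed$
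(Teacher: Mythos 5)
Your proposal is correct, and it takes a genuinely different (and arguably cleaner) route than the paper's proof. The paper fixes the test function $\phi$ on the reflected domain, splits the gradient integral into $\{|y|>\delta\}$ and $\{|y|<\delta\}$, integrates by parts on $\{|y|>\delta\}$ using the interior PDE, and then invokes \eqref{eq:a cero} to kill the boundary flux at $|y|=\delta$ while the $\{|y|<\delta\}$ piece vanishes by local integrability of $y^{1-2s}$. You instead decompose $\phi$ into its even and odd parts in $y$, check that every integrand against $\phi_o$ is odd (using $\partial_y\widetilde U$ odd, $\nabla_x\widetilde U$ even, etc.) so the odd part contributes nothing, and then fold the even part by $y\mapsto |y|$ to obtain exactly twice the corresponding integrals over $\{y>0\}$ with the test function $\phi_e|_{\{y\ge0\}}$, which lies precisely in the admissible class of the hypothesis. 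This avoids both the $\delta$-limit and any explicit appeal to \eqref{eq:a cero}: the Neumann condition is already implicit in the fact that the hypothesis weak formulation admits test functions nonvanishing at $y=0$, which is exactly what folding $\phi_e$ produces. What you gain is that no convergence argument is needed at all; what the paper's route buys is that it is closer to the usual "odd reflection across a boundary" integration-by-parts template and makes the role of the Neumann condition visible. One small remark: your third paragraph raises a worry about a limiting step, but your own argument in the first two paragraphs has already eliminated that step by construction --- the $\varepsilon\to 0$ passage would only be needed if one were re-deriving the hypothesis weak formulation from a pointwise PDE, which the lemma does not ask for. That paragraph is therefore an unnecessary detour, though it does not invalidate the proof.
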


\begin{proof}
Let $\phi$ be a smooth test function with compact support in
$(T_0,T_1)\times\Omega\times(-Y_0,Y_0)$. For any $\delta>0$,
\begin{align*}
\int_\Omega&\int_{T_0}^{T_1}\int_{-Y_0}^{Y_0}(\nabla_{x,y}\widetilde{U})(\nabla_{x,y}\phi)|y|^{1-2s}\,dy\,dt\,dx= \\
&= \int_\Omega\int_{T_0}^{T_1}\int_{|y|>\delta}\dive_{x,y}(|y|^{1-2s}\phi\nabla_{x,y}\widetilde{U})\,dy\,dt\,dx 
+\int_\Omega\int_{T_0}^{T_1}\int_{|y|>\delta}\widetilde{U}(\partial_t\phi)|y|^{1-2s}\,dy\,dt\,dx \\
&\quad+\int_\Omega\int_{T_0}^{T_1}\int_{|y|<\delta}(\nabla_{x,y}\widetilde{U})(\nabla_{x,y}\phi)|y|^{1-2s}\,dy\,dt\,dx \\
&= \int_\Omega\int_{T_0}^{T_1}(|y|^{1-2s}\phi\widetilde{U}_y)\big|_{y=\delta}\,dt\,dx 
+\int_\Omega\int_{T_0}^{T_1}\int_{|y|>\delta}\widetilde{U}(\partial_t\phi)|y|^{1-2s}\,dy\,dt\,dx \\
&\quad+\int_\Omega\int_{T_0}^{T_1}\int_{|y|<\delta}(\nabla_{x,y}\widetilde{U})(\nabla_{x,y}\phi)|y|^{1-2s}\,dy\,dt\,dx.
\end{align*}
Taking $\delta\to0$, we see that the first and third integrals above
tend to zero because of \eqref{eq:a cero} and the local integrability of the integrand, respectively.
Thus, $\widetilde{U}$ is a weak solution to \eqref{eq:weak across}.
\end{proof}

\begin{proof}[Proof of Theorem \ref{thm:Harnack}]
Let $u$ be as in the statement and consider its extension $U$ for $y>0$ as in Theorem \ref{thm:extension}.
Notice that, since $u(t,x)\geq0$ for $t<1$ and $x\in\R^n$, by \eqref{eq:Poisson formula}--\eqref{eq:PoissonKernel}
we have $U=U(t,x,y)\geq0$ for all $0<t<1$, $x\in B_2$ and $y>0$. Let $\widetilde{U}$ be the reflection of $U$ as
given by \eqref{eq:reflection}. Then by Lemma \ref{lem:reflection}, $\widetilde{U}$ is a nonnegative weak solution
to the degenerate parabolic equation \eqref{eq:weak across}
in $(t,x,y)\in(0,1)\times B_2\times(-2,2)$.For this equation the Harnack inequality of
Chiarenza--Serapioni \cite{Chiarenza-Serapioni} (see also Guti\'errez--Wheeden \cite{Gutierrez-Wheeden},
Ishige \cite{Ishige}) applies. Therefore
\begin{align*}
\sup_{(t,x)\in R^-}u(t,x) &= \sup_{(t,x)\in R^-}U(t,x,0)\leq\sup_{(t,x,y)\in R^-\times(-1,1)}U(t,x,y) \\
&\leq c\inf_{(t,x,y)\in R^+\times(-1,1)}U(t,x,y)\leq c\inf_{(t,x)\in R^+}U(t,x,0)=c\inf_{(t,x)\in R^+}u(t,x),
\end{align*}
where $c>0$ is the constant for the Harnack inequality of Chiarenza--Serapioni \cite{Chiarenza-Serapioni},
so it depends only on $n$ and $s$. 
\end{proof}

\begin{proof}[Proof of Corollary \ref{cor:interior}]
 {Let us first prove $(1)$. Let $\widetilde{U}$ be the reflection in $y$ of the extension $U$ of $u$
as in the proof of Theorem \ref{thm:Harnack} above.
Since $\widetilde{U}$ satisfies \eqref{eq:weak across} in the weak sense in $(t,x,y)\in(0,1)\times B_2\times(-2,2)$, the 
Harnack inequality of Chiarenza--Serapioni implies that $\widetilde{U}$ is parabolically H\"older continuous
of order $0<\alpha=\alpha(n,s)<1$ in $(1/2,1)\times B_1\times(-1,1)$ (this argument is standard, see Moser \cite{Moser}).
Therefore, $u(t,x)=\widetilde{U}(t,x,0)$ is parabolically H\"older continuous in $(1/2,1)\times B_1$.
The estimate in the statement follows by observing that (see also \eqref{eq:Poisson formula}--\eqref{eq:PoissonKernel})
\begin{align*}
\sup_{y\in(-2,2)}\|\widetilde{U}\|_{L^\infty((0,1)\times B_2)}
&\leq \sup_{y\in(0,2)}\frac{2y^{2s}}{4^s\Gamma(s)}\int_0^\infty e^{-y^2/(4\tau)}
\sup_{\tau>0}\|e^{-\tau H}u\|_{L^\infty((0,1)\times B_2)}\,\frac{d\tau}{\tau^{1+s}}\\
&\leq\|u\|_{L^\infty((-\infty,1)\times\R^n)}\sup_{y\in(0,2)}\frac{2y^{2s}}{4^s\Gamma(s)}\int_0^\infty e^{-y^2/(4\tau)}\,\frac{d\tau}{\tau^{1+s}} \\
&=2\|u\|_{L^\infty((-\infty,1)\times\R^n)},
\end{align*}
where in the last identity we used \eqref{integral}.}

 {For part $(2)$ we use an incremental quotient argument devised by Caffarelli for fully
nonlinear elliptic equations (see the proof of \cite[Corollary~5.7]{Caffarelli-Cabre}).
Recall that the reflection extension $\widetilde{U}$ of $u$ is a weak solution to
\begin{equation}\label{eq:misma}
\partial_t\widetilde{U}=|y|^{-(1-2s)}\dive_{x,y}(|y|^{1-2s}\nabla_{x,y}\widetilde{U}),
\end{equation}
in $(0,1)\times B_2\times(-2,2)$. It is enough to prove that $\widetilde{U}$
is differentiable with respect to $t$ and $x$, with
$$\sup_{(1/4,3/4)\times B_{1}\times(-1,1)}|\partial^k_tD^\beta_x\widetilde{U}(t,x,y)|
\leq C\|\widetilde{U}\|_{L^\infty((0,1)\times B_2\times(-2,2))},$$
and then restrict back to $y=0$. We know from Harnack inequality that there exists $\alpha=\alpha(n,s)>0$ such that
$$\|\widetilde{U}\|_{C^{\alpha/2,\alpha}_{t,(x,y)}((1/8,7/8)\times B_{3/2}\times(-3/2,3/2))}
\leq C\|\widetilde{U}\|_{L^\infty((0,1)\times B_2\times(-2,2))}.$$
Let $e$ be a unit vector in $\R^n$. Then, by the estimate above, the incremental quotient of order $\alpha$
$$V_h(t,x,y)=\frac{\widetilde{U}(t,x+he,y)-\widetilde{U}(t,x,y)}{|h|^\alpha}$$
is bounded in $(1/8,7/8)\times B_{3/2}\times(-3/2,3/2)$ independently of $h$. In addition, since the extension equation
is linear and translation invariant with respect to $x$, we see that $V_h$ is also a solution to 
\eqref{eq:misma}. Therefore, $V_h$ is parabolically $C^\alpha$ and, in particular, is $C^\alpha$ in $x$.
But then, by \cite[Lemma~5.6]{Caffarelli-Cabre}, $\widetilde{U}$ is $C^{2\alpha}$ in $x$.
By iteration of this argument a finite number of times (that is, by considering incremental
quotients of order $2\alpha$, $3\alpha$ and so on)
we obtain that $\widetilde{U}$ is Lipschitz and, finally, $C^{1,\alpha}$
in $x$ (see \cite[Lemma~5.6]{Caffarelli-Cabre}).
At this point we can differentiate the equation with respect to $x$ and bootstrap the above reasoning
to obtain that $\widetilde{U}$ is differentiable with respect to $x$. Exactly the same arguments can be used
starting from the incremental quotient of order $\alpha/2$ in $t$ to show that 
$\widetilde{U}$ is smooth in $t$. The proof is completed}.
\end{proof}

\begin{proof}[Proof of Theorem \ref{thm:boundary Harnack}]
Let $u$ be as in the statement and consider its extension $U$ as in Theorem \ref{thm:extension}.
As in the previous proofs,
$U=U(t,x,y)\geq0$ for $-2<t<2$, $x\in B_2$ and $y>0$, and its 
reflection $\widetilde{U}$ is a nonnegative weak solution
to \eqref{eq:weak across} in $(-2,2)\times\Omega\times\R$.
We also have that $\widetilde{U}(t,x,y)$ vanishes continuously in
$(-2,2)\times\big((\R^n\setminus\Omega)\cap B_2\big)\times\{0\}$, because
in that set it coincides with $u$.

The first step is to flatten the boundary of $\Omega$ inside $B_2$ by using a bilipschitz transformation
that can be extended as constant in $t$ and $y$. We can assume without loss of generality
that the flat part of the upper half ball $\R^n_+\cap B_{2}$ is the flat part of the new domain $\Omega^{(1)}$.
Then the transformed solution $\widetilde{U}^{(1)}$
satisfies now the same type of degenerate parabolic equation with bounded measurable coefficients
in $(-2,2)\times(\R^n_+\cap B_{2})\times\R$. Moreover, $\widetilde{U}^{(1)}$
vanishes continuously on $(-2,2)\times\big(\R^n_{-}\cap B_{2}\big)\times\{0\}$.

Next we take a transformation
that maps $(x',x_n,y)\in\R^{n+1}\setminus\{x_n\leq0,y=0\}$ into the half space $\R^{n+1}\cap\{x_n>0\}$
and is extended to be constant in $t$.
The construction of this transformation is by now standard, for details see
for example \cite{Caffarelli-Silvestre CPDE} or \cite{Roncal-Stinga}.
After this transformation is performed, we obtain a function $\widetilde{U}^{(2)}$
that solves again a degenerate parabolic equation with bounded measurable coefficients
and $A_2$-weight for $(t,x,y)\in(-2,2)\times\big(\R^n_+\cap B_{2}\big)\times\R$
and vanishes continuously for $(t,x,y)\in(-2,2)\times(\partial\R^n_+\cap B_{2})\times(-2,2)$.

We can apply the boundary Harnack inequality of Ishige \cite{Ishige}
(or Salsa \cite{Salsa} in the case $s=1/2$) to $\widetilde{U}^{(2)}$
and the conclusion follows by transforming back to $u$. Indeed,
$$\sup_{(-1,1)\times(\Omega\cap B_1)}u(t,x)= \sup_{(-1,1)\times(\R^n_+\cap B_1)}\widetilde{U}^{(2)}(t,x,0)
\leq C\widetilde{U}^{(2)}(t_0,\tilde{x}_0,0)= Cu(t_0,x_0),$$
where $\tilde{x}_0$ is the point obtained from $x_0$ after the two transformations above are applied.
\end{proof}

\begin{rem}\label{rem:3}
Recall Remark \ref{rem:2}.
Our combination of ideas and techniques show that Theorems \ref{thm:Harnack} and  \ref{thm:boundary Harnack} are true
when the Laplacian $\Delta$ is replaced by a symmetric uniformly elliptic operator $Lu=\dive(a(x)\nabla u)$
in divergence form with bounded measurable coefficients
in a domain $\Omega\subseteq\R^n$, subject to appropriate boundary conditions.
\end{rem}

\section{Almgren frequency formula}\label{Section:Almgren}

In this section we prove Theorem \ref{thm:Almgren}.

Recall the notation established right before the statement of Theorem \ref{thm:Almgren}.

\begin{proof}[Proof of Theorem \ref{thm:Almgren}]
 {We will repeatedly use the following simple facts:
$$\nabla\G_s=\frac{X}{2t}\G_s,\qquad y^a\partial_t\G_s=-\dive(y^a\nabla\G_s).$$ 
We can write, by using integration by parts,
\begin{align*}
I(R) &= R^2\int_{\R^{n+1}_+}y^a\nabla U\nabla U\G_s\,dX \\
&= -R^2\int_{\R^{n+1}_+}U\dive(y^a\nabla U\G_s)\,dX
+\lim_{\varepsilon\to0^+}\int_{\R^n}y^aUU_y\G_s\big|_{y=\varepsilon}\,dx \\
&= -R^2\int_{\R^{n+1}_+}y^aU\Big(\partial_tU+\nabla U\frac{X}{2t}\Big)\G_s\big|_{t=-R^2}\,dX.
\end{align*}
Therefore, integrating by parts,
\begin{align*}
H'(R) &= -2R\int_{\R^{n+1}_+}\big(y^a2U\partial_tU\G_s+U^2y^a\partial_t\G_s\big)\,dX \\
&= -2R\int_{\R^{n+1}_+}\big(y^a2U\partial_tU\G_s-U^2\dive(y^a\nabla\G_s)\big)\,dX \\
&=-4R\int_{\R^{n+1}_+}y^aU\big(\partial_tU\G_s+\nabla U\nabla\G_s\big)\,dX \\
&=-4R\int_{\R^{n+1}_+}y^aU\Big(\partial_tU+\nabla U\frac{X}{2t}\Big)\G_s\big|_{t=-R^2}\,dX=\frac{4}{R}I(R).
\end{align*}
Next we compute
\begin{align*}
I'(R) &= 2R\int_{\R^{n+1}_+}y^a|\nabla U|^2\G_s\,dX-2R^3\int_{\R^{n+1}_+}
\big(y^a2\nabla U\nabla U_t\G_s+|\nabla U|^2y^a\partial_t\G_s\big)\,dX \\
&= 2R\int_{\R^{n+1}_+}y^a|\nabla U|^2\G_s\,dX-2R^3\int_{\R^{n+1}_+}
\big(y^a2\nabla U\nabla U_t\G_s-|\nabla U|^2\dive(y^a\nabla\G_s)\big)\,dX \\
&= 2R\int_{\R^{n+1}_+}y^a|\nabla U|^2\G_s\,dX-4R^3\int_{\R^{n+1}_+}
y^a\big(\nabla U\nabla U_t\G_s+\nabla U D^2U\nabla\G_s\big)\,dX \\
&= 2R\int_{\R^{n+1}_+}y^a|\nabla U|^2\G_s\,dX-4R^3\int_{\R^{n+1}_+}
y^a\nabla U\Big(\nabla U_t+D^2U\frac{X}{2t}\Big)\G_s\big|_{t=-R^2}\,dX \\
&=-4R^3\int_{\R^{n+1}_+}y^a\nabla U\nabla\Big(\partial_tU+\nabla U\frac{X}{2t}\Big)\G_s\big|_{t=-R^2}\,dX \\
&=4R^3\int_{\R^{n+1}_+}\big(\dive(y^a\nabla U)\G_s
+y^a\nabla U\nabla\G_s\big)\Big(\partial_tU+\nabla U\frac{X}{2t}\Big)\big|_{t=-R^2}\,dX \\
&=4R^3\int_{\R^{n+1}_+}y^a\Big(\partial_tU+\nabla U\frac{X}{2t}\Big)^2\G_s\big|_{t=-R^2}\,dX.
\end{align*}
The Cauchy-Schwartz inequality implies
\begin{align*}
I'(R)&H(R)-I(R)H'(R) \\
&=4R^3\bigg(\int_{\R^{n+1}_+}y^a\Big(\partial_tU+\nabla U\frac{X}{2t}\Big)^2\G_s\,dX\bigg)
\bigg(\int_{\R^{n+1}_+}y^aU^2\G_s\,dX\bigg)\big|_{t=-R^2} \\
&\quad-4R^3\bigg(\int_{\R^{n+1}_+}y^aU\Big(\partial_tU+\nabla U\frac{X}{2t}\Big)\G_s\,dX\bigg)^2\big|_{t=-R^2}\geq0.
\end{align*}
Hence $N'(R)=\frac{I'(R)H(R)-I(R)H'(R)}{[H(R)]^2}\geq0$ and $N(R)$ is monotone
nondecreasing in $R\in(0,1)$. Moreover, $N(R)$ is constant if and only if equality is achieved in
the Cauchy--Schwartz inequality. Equivalently, if and only if there is $\kappa\in\R$ such that
$\kappa U=2t\partial_tU+X\nabla U$,
that is, if and only if $U$ is parabolically homogeneous of degree $\kappa$.}
\end{proof}

\section{H\"older and Schauder estimates}\label{Section:Holder}

In this last section we first present our new characterizations of the parabolic H\"older and Zygmund spaces,
Theorems \ref{caract} and \ref{prop:2}. We then prove Theorems \ref{thm:Holder},
\ref{thm:solution Poisson problem}, \ref{thm:Schauder} and \ref{thm:interiorSchauder}.

\subsection{The novel characterization of parabolic H\"older spaces}

 Parallel to the elliptic case, see \cite[Chapter V]{Stein},
the subordinated Poisson semigroup, $P_y\equiv e^{-yH^{1/2}}$, $y>0$, can be defined by
\begin{align*}
P_y u(t,x) &:= \frac{y}{2\sqrt{\pi}}\int_0^\infty e^{-y^2/(4\tau)}e^{-\tau H}u(t,x)\,\frac{d\tau}{\tau^{3/2}} \\
&=\frac{y}{ 2 \sqrt{\pi}} \int_0^\infty\int_{\R^n}
\frac{e^{-(y^2+|z|^2)/(4\tau)}}{(4\pi \tau)^{n/2}}  u(t-\tau,x-z)\,dz\,\frac{d\tau}{\tau^{3/2}}.
\end{align*} 
This corresponds to the case $s=1/2$ in our extension problem (see Theorem \ref{thm:extension}).
It can be easily seen that $P_{y_1}(P_{y_2}u)=P_{y_1+y_2}u$, $y_1,y_2\geq0$.

\begin{thm}[Parabolic H\"older spaces and growth of $P_yu(t,x)$]\label{caract}
Let $0<\alpha<1$. Suppose that $u=u(t,x)$ is a bounded function on $\R^{n+1}$.
Then $u$ is in the parabolic H\"older space $C^{\alpha/2,\alpha}_{t,x}(\mathbb{R}^{n+1})$ if and only if
there exists a constant $C$ such that
$$\bigg\|\frac{\partial}{\partial y}P_yu(t,x)\bigg\|_{L^\infty(\R^{n+1})}\le Cy^{-1+\alpha},$$
for all $y>0$.
Moreover, if $C_1$ is the infimum of the constants $C$ above then $C_1\sim[u]_{C^{\alpha/2,\alpha}_{t,x}}$.
\end{thm}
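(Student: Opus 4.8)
The plan is to prove the two implications separately, using the semigroup formula
\eqref{eq:semigroup formula} together with the explicit Gauss--Weierstrass representation
\eqref{eq:el calor} of $e^{-\tau H}u$.

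\medskip

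\textbf{From the H\"older bound to the growth estimate.}
First I would assume $u\in C^{\alpha/2,\alpha}_{t,x}(\R^{n+1})$ with seminorm $[u]=[u]_{C^{\alpha/2,\alpha}_{t,x}}$.
Differentiating the Poisson integral in $y$ and using that
$\frac{y}{2\sqrt\pi}\int_0^\infty e^{-y^2/(4\tau)}\,\frac{d\tau}{\tau^{3/2}}=1$, one can write
$$\frac{\partial}{\partial y}P_yu(t,x)=\int_0^\infty\int_{\R^n}\partial_y\!\left(\frac{y}{2\sqrt\pi}\cdot\frac{e^{-(y^2+|z|^2)/(4\tau)}}{(4\pi\tau)^{n/2}\tau^{3/2}}\right)\big(u(t-\tau,x-z)-u(t,x)\big)\,dz\,d\tau,$$
the subtraction of $u(t,x)$ being legitimate because $\partial_y$ of the kernel has zero
integral in $(\tau,z)$ for each fixed $y>0$. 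Then I estimate $|u(t-\tau,x-z)-u(t,x)|\le[u](\tau^{1/2}+|z|)^\alpha\le C[u](\tau^{\alpha/2}+|z|^\alpha)$,
carry out the $z$-integration against the Gaussian (producing powers of $\tau$), and are left with an
elementary one-dimensional integral in $\tau$. The parabolic scaling of the kernel — replace $\tau$ by $y^2\tau$ —
shows this integral equals $Cy^{-1+\alpha}$ times an absolutely convergent numerical integral, where convergence at
$\tau=0$ uses $\alpha>0$ and at $\tau=\infty$ uses $\alpha<1$. This gives the bound with $C\le C_{n,s}[u]$.

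\medskip

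\textbf{From the growth estimate to the H\"older bound.}
Conversely, suppose $\|\partial_yP_yu\|_{L^\infty}\le Cy^{-1+\alpha}$ for all $y>0$. Since
$P_yu(t,x)\to u(t,x)$ as $y\to0^+$ (this is the case $s=1/2$ of the boundary condition in
Theorem \ref{thm:extension}, and it holds pointwise for bounded continuous $u$) and
$P_yu\to0$ as $y\to\infty$ by boundedness, we may recover $u$ as $u(t,x)=-\int_0^\infty\partial_yP_yu(t,x)\,dy$.
Fix two points and set $d=(|\tau|^{1/2}+|z|)$, the parabolic distance between $(t-\tau,x-z)$ and $(t,x)$.
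Split $\int_0^\infty=\int_0^d+\int_d^\infty$. On $(d,\infty)$ we bound
$|P_yu(t-\tau,x-z)-P_yu(t,x)|\le 2\|P_yu\|_{L^\infty}$ and use $\|P_yu\|_{L^\infty}\le\|u\|_{L^\infty}$; better,
we write $P_yu(t-\tau,x-z)-P_yu(t,x)=-\int_y^\infty\partial_\eta\big(P_\eta u(t-\tau,x-z)-P_\eta u(t,x)\big)\,d\eta$
only after controlling the difference on the near scale. The cleaner route is: on $(0,d)$ use the triangle inequality
with $\|\partial_y P_yu\|_{L^\infty}\le Cy^{-1+\alpha}$ to get a contribution $\lesssim C\,d^{\alpha}$; on $(d,\infty)$
use that the semigroup property $P_\eta u=P_{\eta-y}(P_yu)$ together with standard kernel estimates yields
$|P_yu(t-\tau,x-z)-P_yu(t,x)|\le C_n\,d\,y^{-1}\sup_{\eta\ge y}(\eta^{1-\alpha}\|\partial_\eta P_\eta u\|_{L^\infty})\cdot y^{-1+\alpha}=C\,d\,y^{-2+\alpha}$
(a gradient-type bound obtained by differentiating the Poisson kernel once in the $(t,x)$ variables), and integrate
$\int_d^\infty d\,y^{-2+\alpha}\,dy\lesssim d^\alpha$ using $\alpha<1$. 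Adding the two pieces gives
$|u(t-\tau,x-z)-u(t,x)|\le C_n\,C\,d^\alpha$, i.e. $[u]_{C^{\alpha/2,\alpha}_{t,x}}\lesssim C$, and hence
$C_1\sim[u]_{C^{\alpha/2,\alpha}_{t,x}}$.

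\medskip

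\textbf{Main obstacle.}
The delicate point is the estimate on the far scale $(d,\infty)$: one must show that a single $(t,x)$-derivative of
the subordinated Poisson kernel of $H$ is integrable with the right $y$-power, i.e. that $P_y$ behaves like a nice
approximate identity whose $(t,x)$-gradient has size $O(1/y)$ in $L^1$. Because $H=\partial_t-\Delta$ is nonsymmetric
and the time variable scales like the square of the space variables, the relevant kernel
$\frac{y}{2\sqrt\pi}\frac{e^{-(y^2+|z|^2)/(4\tau)}}{(4\pi\tau)^{n/2}\tau^{3/2}}$ must be handled with the parabolic
change of variables $\tau\mapsto y^2\tau$, after which the $t$-derivative and the $x$-gradient both produce an
extra factor $y^{-1}$ times an absolutely convergent integral; verifying this convergence (at $\tau=0$ and
$\tau=\infty$) and tracking constants is the technical heart of the argument. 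The rest is bookkeeping with Gaussian
moments. This parallels Stein's elliptic treatment in \cite[Chapter~V]{Stein}, adapted to the parabolic scaling.
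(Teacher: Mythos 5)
Your forward direction (H\"older $\Rightarrow$ growth bound) is exactly the paper's argument: subtract $u(t,x)$ under the integral using $\int_{\R^{n+1}}\partial_yP_y(\tau,z)\,dz\,d\tau=0$, insert the H\"older modulus, and scale. That part is fine.

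Your backward direction has the right shape — split at the parabolic scale $d=|\tau|^{1/2}+|z|$, bound the near scale by $\int_0^d y^{-1+\alpha}\,dy\sim d^\alpha$, and handle the far scale by a Mean Value Theorem estimate on $P_du$ — but as written it has two real gaps. First, the claim ``$P_yu\to0$ as $y\to\infty$ by boundedness'' is false (take $u\equiv1$: then $P_yu\equiv1$ for all $y$), so the representation $u=-\int_0^\infty\partial_yP_yu\,dy$ does not hold. The paper avoids this entirely: it writes the identity $u(t+\tau,x+z)-u(t,x)=\big(P_yu(t+\tau,x+z)-P_yu(t,x)\big)+\big(u(t+\tau,x+z)-P_yu(t+\tau,x+z)\big)+\big(P_yu(t,x)-u(t,x)\big)$ at the single fixed scale $y=d$, needing only $u-P_yu=-\int_0^y\partial_{y'}P_{y'}u\,dy'$, which never involves $y\to\infty$. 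Second, your far-scale estimate ``$|P_yu(t-\tau,x-z)-P_yu(t,x)|\le C\,d\,y^{-2+\alpha}$'' is not correctly stated: it mixes $P_yu$ with $\partial_yP_yu$, and it fails to distinguish the space derivative (which costs $y^{-1}$) from the time derivative (which, by the parabolic scaling $t\sim y^2$, costs $y^{-2}$). The precise statement you need — and which the paper isolates as Lemma \ref{lema2}, proved via the semigroup identity $P_yu=P_{y/2}(P_{y/2}u)$ together with the $L^1$ kernel bounds of Lemma \ref{lema1} — is that $\|\partial_yP_yu\|_\infty\le Cy^{-1+\alpha}$ implies both $\|\nabla_xP_yu\|_\infty\le Cy^{-1+\alpha}$ and $\|\partial_tP_yu\|_\infty\le Cy^{-2+\alpha}$. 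With that lemma in hand, $|P_du(t+\tau,x+z)-P_du(t,x)|\le|\nabla_xP_du|\,|z|+|\partial_tP_du|\,\tau\le C(d^{-1+\alpha}\cdot d+d^{-2+\alpha}\cdot d^2)=Cd^\alpha$, which is what closes the argument. You correctly identify this $(t,x)$-derivative transfer as ``the technical heart,'' but you do not actually carry it out, and the formula you write in its place is wrong in the exponents.
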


Next, for $\alpha>0$ we let $k=[\alpha]+1$, the smallest integer bigger that $\alpha$. 
Motivated by Theorem \ref{caract} we define the space of functions
$$ \Lambda^\alpha\equiv\Lambda^{\alpha/2,\alpha}_{t,x}:=
\bigg\{u\in L^\infty(\R^{n+1}):\bigg\|\frac{\partial^k}{\partial y^k}P_yu(t,x)\bigg\|_{L^\infty(\R^{n+1})}\le Cy^{-k+\alpha},
~\hbox{for}~y>0,~\hbox{for some}~C\bigg\},$$
under the norm $\|u\|_{\Lambda^\alpha}:=\|u\|_{L^\infty(\R^{n+1})}+C_1$, where $C_1$ is the infimum of the constants $C$
appearing in the definition above.

\begin{thm}\label{prop:2}
Let $\alpha>0$ and $u\in L^\infty(\R^{n+1})$.
\begin{enumerate}[$(1)$]
\item Suppose that $0<\alpha< 2$. Then $u\in\Lambda^\alpha$ if and only if there exists a constant $C>0$ such that
\begin{equation}\label{Lamda}
|u(t-\tau,x-z)+u(t-\tau,x+z)-2u(t,x)|\le C(\tau^{1/2}+|z|)^\alpha,
\end{equation}
for all $(t,x)\in\R^{n+1}$. In this case, if $C_2$ denotes the least constant $C$ for which the inequality
above is true, then $\|u\|_{L^\infty(\R^{n+1})}+C_2\sim\|u\|_{\Lambda^\alpha}$.
\item Suppose that $0<\alpha<2$. Then $u\in \Lambda^\alpha$ if and only
if $u(t,\cdot) \in \Lambda^\alpha(\mathbb{R}^n)$ and $u(\cdot,x)\in\Lambda^{\alpha/2}(\mathbb{R})$
uniformly on $x$ and $t$, where the spaces $\Lambda^\alpha(\R^n)$
and $\Lambda^{\alpha/2}(\R)$ are the classical Zygmund spaces
defined in the usual way, see \cite[p.~147]{Stein}.
\item Suppose that $\alpha>2$. Then $u\in\Lambda^\alpha$ if and only if
$$\frac{\partial^2}{\partial x_i^2}P_yu(t,x)\in\Lambda^{\alpha-2},~\hbox{for}~i=1,\ldots,n,\quad
\hbox{and}\quad\frac{\partial}{\partial t}P_yu(t,x)\in\Lambda^{\alpha-2}.$$
In this case we have the equivalence
$$\|u\|_{\Lambda^\alpha}\sim 
\|u\|_{L^\infty(\R^{n+1})}+\sum_{i=1}^n\bigg\| \frac{\partial^2}{\partial x_i^2}P_yu(t,x)\bigg\|_{\Lambda^{\alpha-2}}+ 
\bigg\|\frac{\partial}{\partial t}P_yu(t,x)\bigg\|_{\Lambda^{\alpha-2}}.$$
\item In particular, we have the following equivalences.
\begin{enumerate}[$(i)$]
\item For $0<\alpha<1$, $\Lambda^\alpha=C^{\alpha/2,\alpha}_{t,x}(\R^{n+1})$.
\item For $1<\alpha<2$, $\Lambda^\alpha=C^{\alpha/2,1+(\alpha-1)}_{t,x}(\R^{n+1})$.
\item For $2<\alpha<3$, $\Lambda^\alpha=C^{1+(\alpha/2-1),2+(\alpha-2)}_{t,x}(\R^{n+1})$.
\item For $k<\alpha<k+1$, $\Lambda^\alpha=C^{[k/2]+(\alpha/2-[k/2]),k+(\alpha-k)}_{t,x}(\R^{n+1})$.
\item When $\alpha=1$, $\Lambda^1=\Lambda^{1/2,1}_{t,x}$, which is the class of functions
that are $1/2$-H\"older in time and $1$-Zygmund in space, see \eqref{Lamda}. This space contains
the space $C^{1/2,1}_{t,x}(\R^{n+1})$ of $1/2$-H\"older continuous functions in $t$ and
Lipschitz continuous functions in $x$.
\item When $\alpha=2$, $\Lambda^2=\Lambda^{1,2}_{t,x}$, which is the class of functions $u$
that are Lipschitz in time and have gradient $\nabla_xu$ in the $1$-Zygmund space.
\end{enumerate}
\end{enumerate}
\end{thm}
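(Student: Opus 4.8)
The plan is to adapt to the present parabolic setting the semigroup description of Lipschitz--Zygmund classes from \cite[Chapter~V]{Stein}, using crucially that for $s=1/2$ the extension equation \eqref{eq:extension} is the \emph{ordinary} heat equation $\partial_tU=\Delta_xU+U_{yy}$ in the $n+1$ spatial variables $(x,y)$, so that $V(t,x,y):=P_yu(t,x)$ is caloric in $(x,y)$ for $y>0$. Writing $\mathcal P_y(\tau,z)$ for the $s=1/2$ Poisson kernel \eqref{eq:PoissonKernel}, the argument will rest on four elementary facts: (i) the semigroup law $P_{a+b}=P_a\circ P_b$ gives, for every $k\ge0$, the identity $\partial_y^{k+1}P_yu=(\partial_sP_s)|_{s=y/2}\big((\partial_b^kP_b)|_{b=y/2}u\big)$ (differentiate $P_a(P_bu)=P_{a+b}u$ once in $a$ and $k$ times in $b$, then set $a=b=y/2$), and $\partial_t,\partial_{x_i}$ commute with $P_y$; (ii) from the Gaussian form of $\mathcal P_y$ and its parabolic scaling $\mathcal P_y(\tau,z)=y^{-(n+2)}\mathcal P_1(\tau/y^2,z/y)$ one reads off $\|D_x^\beta\partial_y^jP_yg\|_{L^\infty}\le Cy^{-|\beta|-j}\|g\|_{L^\infty}$; (iii) $\int_0^\infty\!\int_{\R^n}\partial_y^j\mathcal P_y(\tau,z)\,dz\,d\tau=0$ for $j\ge1$ (since $P_y1\equiv1$) and $\mathcal P_y(\tau,z)=\mathcal P_y(\tau,-z)$; (iv) interior parabolic derivative estimates for $\partial_tw=\Delta_{x,y}w$ on cylinders of radius $\sim y$ contained in $\{y>0\}$.

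First I would settle that $\Lambda^\alpha$ does not depend on the integer $k>\alpha$ in its definition, so that one may take $k=2$ throughout when $0<\alpha<2$. Raising $k$ is immediate from (i) and (ii): $\|\partial_y^{k+1}P_yu\|_\infty\le C(y/2)^{-1}\|\partial_y^kP_{y/2}u\|_\infty$. Lowering $k$, for $k>\alpha$, uses $\partial_\eta^kP_\eta u\to0$ as $\eta\to\infty$ together with $-(k+1)+\alpha<-1$, writing $\partial_y^kP_yu=-\int_y^\infty\partial_\eta^{k+1}P_\eta u\,d\eta$; integration inward from $y=1$ likewise yields $\|\partial_y^mP_yu\|_\infty\le C$ for $0\le m\le[\alpha]$. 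For Part~(1), the implication ``difference bound $\Rightarrow$ growth'' is easy: by (iii) and $z\mapsto-z$, $\partial_y^2P_yu(t,x)=\tfrac12\int_0^\infty\!\int_{\R^n}\partial_y^2\mathcal P_y(\tau,z)\big[u(t-\tau,x-z)+u(t-\tau,x+z)-2u(t,x)\big]\,dz\,d\tau$, so inserting \eqref{Lamda} and the scaling of $\partial_y^2\mathcal P_y$ bounds the left side by $C\int_0^\infty\!\int_{\R^n}|\partial_y^2\mathcal P_y(\tau,z)|(\tau^{1/2}+|z|)^\alpha\,dz\,d\tau=C'y^{-2+\alpha}$, the last integral being finite since $\alpha<2$.

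The converse implication in Part~(1) is the heart of the matter. The idea is first to upgrade $\|\partial_y^2P_yu\|_\infty\le Cy^{-2+\alpha}$ to the mixed estimates $\|D_x^\beta\partial_y^j\partial_t^lP_yu\|_\infty\le Cy^{-|\beta|-j-2l+\alpha}$ for $y\le1$ and the small orders needed ($|\beta|\le2$, $j,l\le1$): apply (iv) to the caloric function $\partial_y^2V$ to obtain all $(x,y)$-- and $t$--derivatives of $\partial_y^2P_yu$ with the gain $\alpha$, use $\partial_tP_yu=\Delta_xP_yu+\partial_y^2P_yu$ to trade each $\partial_t$ for two $(x,y)$--derivatives, and integrate in $y$ from $1$ to convert $\partial_y$--derivatives into $D_x$--derivatives (each integration being legitimate because for $\alpha<2$ the exponents that occur stay $<-1$). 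Then, from the Taylor-type identity $u(t,x)=P_\delta u(t,x)-\delta\,\partial_yP_\delta u(t,x)+\int_0^\delta y\,\partial_y^2P_yu(t,x)\,dy$ with $\delta:=\tau^{1/2}+|z|$ (valid since $P_yu\to u$ uniformly as $y\to0^+$ and $\int_0^\delta y\,\|\partial_y^2P_yu\|_\infty\,dy\le C\delta^\alpha$), I would form the second parabolic difference $u(t-\tau,x-z)+u(t-\tau,x+z)-2u(t,x)$: the integral term is controlled by $C\delta^\alpha$ directly, and for each smooth piece $g\in\{P_\delta u,\ \delta\,\partial_yP_\delta u\}$ one uses $|g(t-\tau,x-z)+g(t-\tau,x+z)-2g(t,x)|\le|z|^2\|D_x^2g\|_\infty+2\tau\|\partial_tg\|_\infty\le C\delta^\alpha$ via the mixed estimates (recall $|z|\le\delta$, $\tau\le\delta^2$), the range $\delta\ge1$ being trivial from $\|u\|_\infty$. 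Part~(2) then follows from Part~(1): the direction $\Rightarrow$ by setting $\tau=0$ in \eqref{Lamda} (exactly $u(t,\cdot)\in\Lambda^\alpha(\R^n)$ uniformly in $t$) and $z=0$ (exactly $u(\cdot,x)\in C^{\alpha/2}(\R)\subseteq\Lambda^{\alpha/2}(\R)$ uniformly in $x$), and the direction $\Leftarrow$ by the splitting $u(t-\tau,x-z)+u(t-\tau,x+z)-2u(t,x)=[u(t-\tau,x-z)+u(t-\tau,x+z)-2u(t-\tau,x)]+2[u(t-\tau,x)-u(t,x)]$, bounding the two brackets by $C|z|^\alpha$ and $C\tau^{\alpha/2}$ (using $\Lambda^\beta(\R)=C^\beta(\R)$ for $0<\beta=\alpha/2<1$) and then invoking Part~(1). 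Part~(3) is an induction on $[\alpha]$: commuting $P_y$ past $\partial_t$ and $\partial_{x_i}$ and using $\partial_tP_yu=\Delta_xP_yu+\partial_y^2P_yu$ turns the growth of $\partial_y^kP_yu$ into the same growth at level $\alpha-2$ for $\partial_tP_yu$ and each $\partial_{x_i}^2P_yu$. Part~(4) is then pure bookkeeping, combining (1), (2), (3) with the definition of $C^{\alpha/2,\alpha}_{t,x}$ and the classical identification of Zygmund spaces with parabolic H\"older spaces at non-integer orders, the genuine Zygmund classes surviving at integer orders.

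I expect the main obstacle to be this converse direction of Part~(1): extracting the full family of mixed derivative bounds $\|D_x^\beta\partial_y^j\partial_t^lP_yu\|_\infty\le Cy^{-|\beta|-j-2l+\alpha}$ from the single hypothesized $y$--growth, with the correct parabolic homogeneity and the inward $y$--integration carried out carefully. A secondary difficulty is keeping the induction in Part~(3) uniform over all $\alpha>2$ and across the borderline integer values.
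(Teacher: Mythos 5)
Your proposal is correct and takes essentially the same route as the paper: for Part (1) the forward direction uses the symmetrized kernel integral together with the vanishing moment and the Gaussian decay of $\partial_y^2\mathcal{P}_y$, and the converse uses the Taylor-type identity $u=P_\delta u-\delta\,\partial_yP_\delta u+\int_0^\delta y\,\partial_y^2P_yu\,dy$ with $\delta=\tau^{1/2}+|z|$; Parts (2), (3), (4) follow the same splitting, the Stein identification $\Lambda^\beta(\R)=C^\beta(\R)$ for $0<\beta<1$, the commutation of $P_y$ with $\partial_{x_i}$ and $\partial_t$ together with inward $y$-integration, and the final bookkeeping, exactly as in the paper.

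The one genuine variant is in how you obtain the intermediate mixed bounds $\|D_x^\beta\partial_y^j\partial_t^lP_yu\|_\infty\lesssim y^{-|\beta|-j-2l+\alpha}$: you propose to use interior parabolic derivative estimates applied to the caloric function $\partial_y^2V$ on cylinders of radius $\sim y$ inside $\{y>0\}$, plus the equation $\partial_tP_yu=\Delta_xP_yu+\partial_y^2P_yu$ and inward $y$-integration; the paper instead derives these bounds purely via the semigroup convolution identities of Lemma \ref{lema2} (e.g.\ $\partial_y\partial_{x_i}P_yu=(\partial_{x_i}P_{y/2})*(\partial_yP_{y/2}u)$ combined with $\|\partial_{x_i}P_{y/2}\|_{L^1}\lesssim y^{-1}$). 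Both are valid; the semigroup route is self-contained and avoids invoking interior caloric estimates, while your PDE route exploits more directly the fact that the $s=1/2$ extension is exactly the heat equation in $(x,y)$. You also split the second difference as a pure second $x$-difference at time $t-\tau$ plus a pure first $t$-difference, which is slightly cleaner than the paper's two-step mean value expansion that produces an extra $D^2_{t,x}g$ term, but the final bound is the same.
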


For the convenience of the reader, we will next prove Theorems \ref{thm:Holder},
\ref{thm:solution Poisson problem}, \ref{thm:Schauder} and \ref{thm:interiorSchauder},
and postpone the proofs of Theorems \ref{caract} and \ref{prop:2} until Subsection \ref{theproofs}.

\subsection{H\"older estimates}

It can be checked by using the Cauchy Integral Theorem that for $0<s<1$, $\rho\in\R$ and $\xi\in\R^n$,
$$(i\rho+|\xi|^2)^s=\frac{1}{c_s}\int_0^\infty\big(e^{-\tau(i\rho+|\xi|^2)^{1/2}}-1\big)^{[2s]+1}\,\frac{d\tau}{\tau^{1+2s}},$$
where $[2s]$ denotes the integer part of $0<2s<2$, and
$$c_s=\int_0^\infty(e^{-\tau}-1)^{[2s]+1}\,\frac{d\tau}{\tau^{1+2s}}.$$
With this we write the following equivalent formula:
\begin{equation}\label{1}
H^su(t,x)=\frac{1}{c_s}\int_0^\infty\big(e^{-\tau H^{1/2}}-\Id\big)^{[2s]+1}u(t,x)\,\frac{d\tau}{\tau^{1+2s}}.
\end{equation}

\begin{proof}[Proof of Theorem \ref{thm:Holder}]
In view of Theorems \ref{caract} and \ref{prop:2},
it is enough to show that if $u\in\Lambda^\alpha$ and $2s<\alpha$ then $H^{s}u \in \Lambda^{\alpha-2s}$ and
there exists a constant $C$ such that
$$\|H^su\|_{\Lambda^{\alpha-2s}}\leq C\|u\|_{\Lambda^\alpha}.$$

Assume first that $0<s<1/2$, so $[2s]=0$. 
For any $\alpha <1$ we have $P_yu\rightarrow u$ uniformly. Indeed,
\begin{equation}\label{uniform} 
\begin{aligned}
|P_yu(t,x)-u(t,x)|&=
\bigg|\frac{y}{2\sqrt{\pi}}\int_0^\infty e^{-y^2/(4\tau)}\big(e^{-\tau H}u(t,x)-u(t,x)\big)\,\frac{d\tau}{\tau^{3/2}}\bigg|\\
&\le Cy\int_0^\infty e^{-y^2/(4\tau)}\int_{\R^{n}}
\frac{e^{-|z|^2/(4\tau)}}{\tau^{n/2}} |u(t-\tau,x-z)- u(t,x)|\,dz\,\frac{d\tau}{\tau^{3/2}} \\
&\le C\|u\|_{\Lambda^\alpha} y^\alpha,\qquad\hbox{for every}~(t,x)\in\R^{n+1}.
\end{aligned}
\end{equation}
Then
\begin{align*}
\bigg|\frac{\partial}{\partial y}P_y(H^{s}u)(t,x)\bigg|
&= \bigg|\frac{\partial}{\partial y}P_y\bigg(\frac{1}{c_s}\int_0^\infty(P_\tau u(t,x)-u(t,x))\,\frac{d\tau}{\tau^{1+2s}}\bigg)\bigg| \\ 
&\leq C_s\bigg\{\int_0^y+\int_y^\infty\bigg\}\bigg|\frac{\partial}{\partial y}P_y(P_\tau u(t,x)- u(t,x))\bigg|\,\frac{d\tau}{\tau^{1+2s}}.
\end{align*}
By using the semigroup property we can estimate the first integral above as follows: \begin{align*}
\int_0^y\bigg|\frac{\partial}{\partial y}P_y\int_0^\tau\frac{\partial}{\partial r}P_ru(t,x)\,dr\bigg|\,\frac{d\tau}{\tau^{1+2s}} 
&\le  \int_0^y\int_0^\tau\bigg|\frac{\partial^2}{\partial\nu^2}P_\nu u(t,x)\Big|_{\nu=y+r}\bigg|\,dr\,\frac{d\tau}{\tau^{1+2s}} \\
&\le C\|u\|_{\Lambda^\alpha} \int_0^y\int_0^\tau(y+r)^{-2+\alpha}\,dr\,\frac{d\tau}{\tau^{1+2s}} \\
&= C\|u\|_{\Lambda^\alpha}y^{-1+\alpha}\int_0^y\int_0^{\tau/y}(1+r)^{-2+\alpha}\,dr\,\frac{d\tau}{\tau^{1+2s}} \\
&\le C\|u\|_{\Lambda^\alpha}y^{-1+\alpha}
\int_0^y\bigg(\frac{\tau}{y}\bigg)\frac{d\tau}{\tau^{1+2s}}=C\|u\|_{\Lambda^\alpha}y^{-1+\alpha-2s}.
\end{align*}
Observe that the second integral is controlled by 
\begin{align*}
\int_y^\infty&\bigg|\frac{\partial}{\partial y}P_yP_\tau u(t,x)\bigg|\,\frac{d\tau}{\tau^{1+2s}}+
\int_y^\infty\bigg|\frac{\partial}{\partial y}P_yu(t,x)\bigg|\,\frac{d\tau}{\tau^{1+2s}} \\
&\le C\|u\|_{\Lambda^\alpha}\int_y^\infty(y+\tau)^{-1+\alpha}\,\frac{d\tau}{\tau^{1+2s}}
+C\|u\|_{\Lambda^\alpha}\int_y^\infty y^{-1+\alpha}\frac{d\tau}{\tau^{1+2s}}
\le C\|u\|_{\Lambda^\alpha}y^{-1+\alpha-2s}.
\end{align*}

Let us now assume that $1/2\leq s<1$, so $[2s]=1$. Then
\begin{equation}\label{aestimar}
\begin{aligned}
\bigg|\frac{\partial}{\partial y}P_y(H^{s}u)(t,x)\bigg|
&= \bigg|\frac{\partial}{\partial y}P_y\bigg(\frac{1}{c_s}\int_0^\infty(P_{2\tau} u(t,x)-2P_\tau u(t,x)+u(t,x))\,\frac{d\tau}{\tau^{1+2s}}\bigg)\bigg| \\ 
&\leq C_s\bigg|\bigg\{\int_0^y+\int_y^\infty\bigg\}\frac{\partial}{\partial y}P_y(P_{2\tau} u(t,x)- P_{\tau} u(t,x)) \,\frac{d\tau}{\tau^{1+2s}}\bigg| \\ &\quad  +C_s \bigg|\bigg\{\int_0^y+
\int_y^\infty\bigg\} \frac{\partial}{\partial y}P_y(P_{\tau} u(t,x)-u(x,t))\,\frac{d\tau}{\tau^{1+2s}}\bigg|.
\end{aligned}
\end{equation}
By using the semigroup property we can estimate the first integral in the first term above as follows:
\begin{align*}
\int_0^y\bigg|\frac{\partial}{\partial y}P_y\int_\tau^{2\tau}\frac{\partial}{\partial r}P_ru(t,x)\,dr\bigg|\,\frac{d\tau}{\tau^{1+2s}} 
&\le  \int_0^y\int_\tau^{2\tau}\bigg|\frac{\partial^2}{\partial\nu^2}P_\nu u(t,x)\Big|_{\nu=y+r}\bigg|\,dr\,\frac{d\tau}{\tau^{1+2s}} \\
&\le C\|u\|_{\Lambda^\alpha} \int_0^y\int_\tau^{2\tau}(y+r)^{-2+\alpha}\,dr\,\frac{d\tau}{\tau^{1+2s}} \\
&= C\|u\|_{\Lambda^\alpha}y^{-1+\alpha}\int_0^y\int_{\tau/y}^{2\tau/y}(1+r)^{-2+\alpha}\,dr\,\frac{d\tau}{\tau^{1+2s}} \\
&\le C\|u\|_{\Lambda^\alpha}y^{-1+\alpha}
\int_0^y\bigg(\frac{\tau}{y}\bigg)\frac{d\tau}{\tau^{1+2s}}=C\|u\|_{\Lambda^\alpha}y^{-1+\alpha-2s}.
\end{align*}
Observe that the second integral in the first term above is controlled by 
\begin{align*}
\int_y^\infty&\bigg|\frac{\partial}{\partial y}P_yP_{2\tau }u(t,x)\bigg|\,\frac{d\tau}{\tau^{1+2s}}+
\int_y^\infty\bigg|\frac{\partial}{\partial y}P_yP_{\tau }u(t,x)\bigg|\,\frac{d\tau}{\tau^{1+2s}} \\
&\le C\|u\|_{\Lambda^\alpha}\int_y^\infty(y+2\tau)^{-1+\alpha}\,\frac{d\tau}{\tau^{1+2s}}
+C\|u\|_{\Lambda^\alpha}\int_y^\infty (y+\tau)^{-1+\alpha}\frac{d\tau}{\tau^{1+2s}}
\le C\|u\|_{\Lambda^\alpha}y^{-1+\alpha-2s}.
\end{align*}
For the second term in \eqref{aestimar} we proceed as in the case $[2s]=0$.
\end{proof}

\subsection{The Poisson problem}

We present the proof of Theorem \ref{thm:solution Poisson problem}.

\begin{proof}[Proof of Theorem \ref{thm:solution Poisson problem}]
The first identity for $u$ in the statement follows by multiplying 
the following formula with the Gamma function
$$(i\rho+|\xi|^2)^{-s} = \frac1{\Gamma(s)} \int_0^\infty e^{-\tau(i\rho+|\xi|^2)}\,\frac{d\tau}{\tau^{1-s}},$$
by $\widehat{f}(\rho,\xi)$ and taking inverse Fourier transform. The second one is obtained by writing
$e^{-\tau H}f(t,x)$ with its kernel, see \eqref{eq:el calor}. Finally, estimates $(1)$ and $(2)$ are obvious.
\end{proof}

\subsection{Schauder estimates}

For any $s>0$ we have
$$(i\rho+|\xi|^2)^{-s}=\frac{1}{\Gamma(2s)}\int_0^\infty e^{-\tau(i\rho+|\xi|^2)^{1/2}}\,\frac{d\tau}{\tau^{1-2s}},$$
so the $L^2$ solution $u$ to $H^su=f$ can also be written as
\begin{equation}\label{2}
u(t,x)=H^{-s}f(t,x)=\frac{1}{\Gamma(2s)}\int_0^\infty e^{-\tau H^{1/2}}f(t,x)\,\frac{d\tau}{\tau^{1-2s}}.
\end{equation}

\begin{proof}[Proof of Theorem \ref{thm:Schauder}]
For the first part, it is enough to show that
if $u$ solves $H^su=f$ in $\R^{n+1}$ in the sense that $u=H^{-s}f$, and $f\in\Lambda^\alpha$, then
$u\in\Lambda^{\alpha+2s}$ and there exists a constant $C$ such that
$$\|u\|_{\Lambda^{\alpha+2s}}\leq C\big(\|f\|_{\Lambda^\alpha}+\|u\|_{L^\infty(\R^{n+1})}\big).$$
Indeed, we observe that 
\begin{align*}
\bigg|\frac{\partial}{\partial y}P_yu(t,x)\bigg|
&\leq\frac{1}{\Gamma(2s)}\int_0^\infty\bigg|\frac{\partial}{\partial y}P_{y+\tau}f(t,x)\bigg|\,\frac{d\tau}{\tau^{1-2s}} \\
&\le C\|f\|_{\Lambda^{\alpha}}\int_0^\infty(y+\tau)^{-1+\alpha}\,\frac{d\tau}{\tau^{1-2s}}
=C_{s,\alpha}\|f\|_{\Lambda^{\alpha}}y^{(\alpha+2s)-1}.
\end{align*}

To prove the second part of the statement, it is enough to show that
$$\|u\|_{\Lambda^{s,2s}_{t,x}}\leq C\big(\|f\|_{L^\infty(\R^{n+1})}+\|u\|_{L^\infty(\R^{n+1})}\big).$$
Suppose that $2s<1$. From Lemma \ref{lema1} (ii) we see that for $f$ bounded we have
$$\bigg|y\frac{\partial}{\partial y}P_yf(t,x)\bigg|\le C\|f\|_{L^\infty(\R^{n+1})}.$$
Hence
\begin{align*}
\bigg|y\frac{\partial}{\partial y}P_yu(t,x)\bigg|
&\leq  C\bigg|y\int_0^\infty\frac{\partial}{\partial y}P_{y+\tau}f(t,x)\,\frac{d\tau}{\tau^{1-2s}}\bigg| \\
&\leq  C\|f\|_{L^\infty(\R^{n+1})}y\int_0^\infty\frac{\tau^{2s-1}}{(y+\tau)}\,d\tau \\
&= C \|f\|_{L^\infty(\R^{n+1})}y^{2s}\int_0^\infty\frac{r^{2s-1}}{(1+r)}\,dr=C \|f\|_{L^\infty(\R^{n+1})}y^{2s}.
\end{align*}
In the case when $2s\geq1$ we proceed in a similar way but using now that
$$\bigg|y^2\frac{\partial^2}{\partial y^2}P_yf(t,x)\bigg|\le C\|f\|_{L^\infty(\R^{n+1})},$$
which follows from \eqref{3Poisson}.
\end{proof}

\begin{proof}[Proof of Theorem \ref{thm:interiorSchauder}]
 {Let $\eta=\eta(t,x)\in C^\infty_c((0,1)\times B_{2})$
such that $\eta=1$ in $(1/8,7/8)\times B_{3/2}$, $0\leq\eta\leq1$ in $\R^{n+1}$. Let (see Theorem \ref{thm:solution Poisson problem})
$$w(t,x)=H^{-s}(\eta f)(t,x)=\int_{-\infty}^t\int_{\R^n}K_{-s}(t-\tau,x-z)(\eta f)(\tau,z)\,dz\,d\tau.$$
We estimate the $L^\infty $norm of $w$ first. Since the kernel $K_{-s}$ is positive and
$\eta\geq0$ is a smooth function with compact support in $(0,1)\times B_2$,
\begin{equation}\label{prima}
\|w\|_{L^\infty(\R^{n+1})} \leq \|f\|_{L^\infty((0,1)\times B_2)}\|H^{-s}\eta\|_{L^\infty(\R^{n+1})}
=C\|f\|_{L^\infty((0,1)\times B_2)}.
\end{equation}
Next we estimate the H\"older--Zygmund norms of $w$ depending on the assumptions on $f$.}

 {If $f$ satisfies $(1)$ then $\eta f\in\Lambda^\alpha$.
It is easy to check that
\begin{equation}\label{seconda}
\|\eta f\|_{\Lambda^\alpha}\leq\|f\|_{L^\infty((0,1)\times B_2)}\|\eta\|_{\Lambda^\alpha}
+\|f\|_{\Lambda^\alpha((0,1)\times B_2)}\|\eta\|_{L^\infty(\R^{n+1})}
\leq C\|f\|_{\Lambda^\alpha((0,1)\times B_2)},
\end{equation}
where by $\Lambda^\alpha(D)$, $D\subset\R^{n+1}$, we mean
the corresponding H\"older--Zygmund space
given by the characterizations of Theorem \ref{prop:2} $(4)$
with $D$ in place of $\R^{n+1}$.
From Theorem \ref{thm:Schauder} $(1)$, $w\in\Lambda^{\alpha+2s}$
with the estimate
$$\|w\|_{\Lambda^{\alpha+2s}} \leq C\big(\|\eta f\|_{\Lambda^\alpha}+\|w\|_{L^\infty(\R^{n+1})}\big)
\leq C\|f\|_{\Lambda^\alpha((0,1)\times B_2)},$$
where we applied \eqref{prima} and \eqref{seconda}.}

 {In a similar way, if $f$ satisfies $(2)$ then $\eta f\in L^\infty(\R^{n+1})$ and
we can apply Theorem \ref{thm:Schauder} $(2)$ to conclude
that $w\in\Lambda^{2s}$ with the estimate
$$\|w\|_{\Lambda^{2s}}\leq C\big(\|\eta f\|_{L^\infty(\R^{n+1})}+\|w\|_{L^\infty(\R^{n+1})}\big)
\leq C\|f\|_{L^\infty((0,1)\times B_2)}.$$}

 {Let now $u$ be as in the statement. Then
$$H^sw=f=H^su\quad\hbox{in}~(1/8,7/8)\times B_{3/2}.$$
Hence $H^s(u-w)=0$ in $(1/8,7/8)\times B_{3/2}$. By the interior
derivative estimates in Corollary \ref{cor:interior} (2),
$u-w$ is smooth in $(1/4,3/4)\times B_1$ and all its time and space derivatives,
and, in particular, all the parabolic H\"older and Zygmund norms of $u-w$ in $(1/4,3/4)\times B_1$
are bounded by
$$C\|u-w\|_{L^\infty((-\infty,1)\times\R^n)}\leq C\big(\|u\|_{L^\infty((-\infty,1)\times\R^n)}
+\|f\|_{L^\infty((0,1)\times B_2)}\big).$$
The conclusion follows by noticing, in case $(1)$, that
\begin{align*}
\|u\|_{\Lambda^{\alpha+2s}((1/4,3/4)\times B_1)}
&\leq\|u-w\|_{\Lambda^{\alpha+2s}((1/4,3/4)\times B_1))}+\|w\|_{\Lambda^{\alpha+2s}} \\
&\leq C\big(\|u\|_{L^\infty((-\infty,1)\times\R^n)}+\|f\|_{\Lambda^\alpha((0,1)\times B_2)}\big),
\end{align*}
and, in case $(2)$,
\begin{align*}
\|u\|_{\Lambda^{2s}((1/4,3/4)\times B_1)}
&\leq\|u-w\|_{\Lambda^{2s}((1/4,3/4)\times B_1))}+\|w\|_{\Lambda^{2s}} \\
&\leq C\big(\|u\|_{L^\infty((-\infty,1)\times\R^n)}+\|f\|_{L^\infty((0,1)\times B_2)}\big).
\end{align*}}
\end{proof}

\subsection{Proofs of Theorems \ref{caract} and \ref{prop:2}}\label{theproofs}

To prove Theorem \ref{caract} we need two preliminary lemmas.
Observe that $P_yu(t,x)$ can be written as a convolution of $u$ over $\R^{n+1}$ with the kernel
\begin{equation}\label{defPoisson}
P_y(\tau,z) = \frac{y}{2\sqrt{\pi}} 
\frac{e^{-(y^2+|z|^2)/(4\tau)}}{(4\pi \tau)^{n/2}\tau^{3/2}}\chi_{(0,\infty)}(\tau),\quad \tau\in\R,~z\in\R^n,~y>0.
\end{equation}

\begin{lem}\label{lema1}
Let $P_y(\tau,z)$ be the kernel given by \eqref{defPoisson}. We have 
\begin{itemize}
\item [(i)]$\displaystyle \int_{\mathbb{R}^{n+1}} P_y(\tau,z)\,dz\,d\tau=1$;
\item [(ii)] $\displaystyle \int_{\mathbb{R}^{n+1}} |\partial_y P_y(\tau,z)|\,dz\,d\tau
\le \frac {C}{y}$, and $\displaystyle \int_{\mathbb{R}^{n+1}} |\partial_{z_i} P_y(\tau,z)|\,dz\,d\tau
\le \frac {C}{y}$, $i=1,\dots, n$;
\item [(iii)] $\displaystyle \int_{\mathbb{R}^{n+1}} 
\partial_y P_y(\tau,z)\,dz\,d\tau=0$.
\end{itemize}
\end{lem}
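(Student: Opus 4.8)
The plan is to factor the kernel through the Gauss--Weierstrass kernel of \eqref{GW}, writing
$P_y(\tau,z)=\frac{y}{2\sqrt{\pi}}\,W(\tau,z)\,e^{-y^2/(4\tau)}\,\tau^{-3/2}\chi_{(0,\infty)}(\tau)$,
so that every integral over $\R^{n+1}$ collapses, after integrating in $z$, to a one-dimensional integral in $\tau$ which is a special case of the elementary identity $\int_0^\infty e^{-y^2/(4\tau)}\,\tau^{-1-a}\,d\tau=4^{a}\Gamma(a)\,y^{-2a}$, valid for $a>0$ --- this is precisely \eqref{integral} rewritten. Together with the Gaussian moment bound $\int_{\R^n}|z|\,W(\tau,z)\,dz=c_n\tau^{1/2}$ and $\int_{\R^n}W(\tau,z)\,dz=1$, these are the only inputs needed. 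I expect no substantive obstacle; the work is bookkeeping of constants.

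For $(i)$: integrating first in $z$ and using $\int_{\R^n}W(\tau,z)\,dz=1$ reduces the integral to $\frac{y}{2\sqrt{\pi}}\int_0^\infty e^{-y^2/(4\tau)}\,\tau^{-3/2}\,d\tau$, which equals $1$ by the displayed identity with $a=1/2$, equivalently by \eqref{integral} with $s=1/2$.

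For $(ii)$: a direct differentiation of \eqref{defPoisson} gives $\partial_y P_y(\tau,z)=\bigl(\tfrac1y-\tfrac{y}{2\tau}\bigr)P_y(\tau,z)$ and $\partial_{z_i}P_y(\tau,z)=-\tfrac{z_i}{2\tau}\,P_y(\tau,z)$. Hence $|\partial_y P_y|\le\frac1y P_y+\frac{y}{2\tau}P_y$; the first term integrates to $1/y$ by $(i)$, while for the second one integrates out $z$ and applies the elementary identity with $a=3/2$ to obtain $\int_{\R^{n+1}}\frac{y}{\tau}P_y\,dz\,d\tau=\frac{y^2}{2\sqrt{\pi}}\int_0^\infty e^{-y^2/(4\tau)}\tau^{-5/2}\,d\tau=\frac2y$, so $\int_{\R^{n+1}}|\partial_y P_y|\,dz\,d\tau\le\frac2y$. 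For the spatial derivatives, $|\partial_{z_i}P_y|\le\frac{|z|}{2\tau}P_y$; integrating out $z$ with the first absolute moment of $W$ and then using the identity with $a=1$ yields $\int_{\R^{n+1}}|\partial_{z_i}P_y|\,dz\,d\tau\le\frac{c_n y}{4\sqrt{\pi}}\int_0^\infty e^{-y^2/(4\tau)}\tau^{-2}\,d\tau=\frac{c_n}{\sqrt{\pi}\,y}$.

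For $(iii)$: the quickest route is to differentiate the identity in $(i)$ with respect to $y$; the interchange of $\partial_y$ with $\int_{\R^{n+1}}$ is legitimate because, by the bounds just obtained, $|\partial_y P_y(\tau,z)|$ is dominated, on a neighbourhood of any fixed $y_0>0$, by a fixed integrable function of $(\tau,z)$. Alternatively one computes directly $\int_{\R^{n+1}}\partial_y P_y\,dz\,d\tau=\frac1y\int_{\R^{n+1}}P_y\,dz\,d\tau-\frac12\int_{\R^{n+1}}\frac{y}{\tau}P_y\,dz\,d\tau=\frac1y-\frac12\cdot\frac2y=0$. The only point worth a word throughout is the behaviour as $\tau\to0^+$: the factor $1-\frac{y^2}{2\tau}$ in $\partial_yP_y$ is unbounded there, but it is multiplied by $P_y$, whose $e^{-y^2/(4\tau)}$ factor renders all the integrals above absolutely convergent --- and quantifying this is exactly the elementary identity with $a=3/2$.
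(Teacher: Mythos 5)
Your proof is correct and follows essentially the same route as the paper: reduce everything, after integrating out $z$ against the Gauss--Weierstrass kernel, to the one-dimensional Gamma-function integral of \eqref{integral}. The only cosmetic difference is that the paper in $(ii)$ bounds $\partial_yP_y$ by a single Gaussian envelope $C e^{-(y^2+|z|^2)/(c\tau)}\tau^{-(n+3)/2}$ before integrating, whereas you keep the exact factored derivatives $\partial_yP_y=(\tfrac1y-\tfrac{y}{2\tau})P_y$ and $\partial_{z_i}P_y=-\tfrac{z_i}{2\tau}P_y$ and integrate each term exactly; both are sound, and your $(iii)$, by differentiating $(i)$ under the integral (justified via $(ii)$) or by direct cancellation $\tfrac1y-\tfrac12\cdot\tfrac2y=0$, matches the paper's ``$(iii)$ is a consequence of $(i)$ and $(ii)$.''
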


\begin{proof}
(i) is  easy to check. 
For (ii),  we observe that 
\begin{equation}\label{formula2}
|\partial_yP_y(\tau,z)|\le C\frac{e^{-(y^2+|z|^2)/(c\tau)}}{\tau^{(n+3)/2}} \chi_{(0,\infty)}(\tau).
\end{equation}  
Hence 
$$\int_{\mathbb{R}^{n+1}} |\partial_y P_y(\tau,z)|\,dz\,d\tau\le
 C \int_{0}^\infty \frac{ e^{-y^2/(c\tau)}}{\tau^{3/2}}\,d\tau=\frac{C}{y}.$$
The derivatives with respect to the spatial variables $z_i$ can be handled in a parallel way.
(iii) is a consequence of (i) and (ii).
\end{proof}

\begin{lem} \label{lema2}
Let $u=u(t,x)\in L^\infty(\mathbb{R}^{n+1})$ and $0<\alpha <1$. The following conditions are equivalent.
\begin{itemize}
\item[(1)] There exists a constant $A$ such that
$$\|\partial_{y} P_yu\|_{L^\infty(\R^{n+1})}\le Ay^{-1+\alpha}.$$
\item[(2)]There exist constants $B_i$, $i=1,\ldots,n$ and $C_1$ such that
$$\|\partial_{x_i} P_yu\|_{L^\infty(\R^{n+1})}\le B_iy^{-1+\alpha},~i=1,\ldots,n,\quad\hbox{and}\quad
\| \partial_{t}P_yu\|_{L^\infty(\R^{n+1})}\le C_1y^{-2+\alpha}.$$
\end{itemize}
Moreover the infimum of the constants $A$ appearing in {\rm (1)}
is equivalent to the infimum of the constants $B_i$ and $C$ appearing in {\rm (2)}.
\end{lem}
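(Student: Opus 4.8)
The plan is to exploit that, in the case $s=1/2$ relevant here, the extension equation of Theorem \ref{thm:extension} has no drift term, so that $U(t,x,y):=P_yu(t,x)$ is a bounded \emph{classical} solution of $\partial_tU=\Delta_xU+\partial_{yy}U$ in $\{y>0\}$; in other words $U$ is a caloric function of the $n+2$ variables $(t,x,y)$ there, with $\|U(\cdot,\cdot,y)\|_{L^\infty}\le\|u\|_{L^\infty}$ by Lemma \ref{lema1}(i). That $U$ is a genuine pointwise solution, not merely a weak one, follows by differentiating under the integral sign against the smooth, rapidly decaying Poisson kernel \eqref{defPoisson}, using \eqref{eq:el calor}. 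Since the equation has constant coefficients and commutes with $\partial_y$ and with $x$-translations, each of $\partial_yU$, $\partial_{x_i}U$, $\partial_tU$ is again caloric in $\{y>0\}$. Three tools will be used repeatedly: (a) the semigroup identity $P_y=P_{y/2}\circ P_{y/2}$ combined with $\|\partial_{x_i}P_rv\|_{L^\infty}\le (C/r)\|v\|_{L^\infty}$ from Lemma \ref{lema1}(ii), which gains a factor $y^{-1}$ per spatial derivative; (b) Lemma \ref{lema1}(ii) again, giving $\|\partial_yP_ru\|_{L^\infty}+\|\partial_{x_i}P_ru\|_{L^\infty}\le (C/r)\|u\|_{L^\infty}\to0$ as $r\to\infty$, so that a first derivative of $U$ at height $y$ equals the absolutely convergent tail integral in $y$ of a second derivative; and (c) the standard interior parabolic estimate for caloric functions, $|\nabla_{x,y}w(t,x,y)|\le (C/r)\sup|w|$ when $w$ is caloric on $(t-r^2,t]\times B_r((x,y))$. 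Taking $r=y/4$ in (c) only uses heights comparable to $y$, so the backward parabolic cylinders stay strictly inside $\{y>0\}$.

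For $(1)\Rightarrow(2)$, put $V:=\partial_yU$; it is caloric and by hypothesis $\|V(\cdot,\cdot,y)\|_{L^\infty}\le Ay^{-1+\alpha}$, which on heights in $[y/2,2y]$ is $\le CAy^{-1+\alpha}$ since $0<\alpha<1$. Applying (c) to $V$ with $r=y/4$ gives $\|\partial_{x_i}V(\cdot,\cdot,y)\|_{L^\infty}+\|\partial_yV(\cdot,\cdot,y)\|_{L^\infty}\le CAy^{-2+\alpha}$. Because $-2+\alpha<-1$, integrating $\partial_y(\partial_{x_i}U)=\partial_{x_i}V$ from $y$ to $\infty$ (legitimate by (b)) yields $\|\partial_{x_i}U(\cdot,\cdot,y)\|_{L^\infty}\le CAy^{-1+\alpha}$, the spatial half of $(2)$. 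For the time derivative one returns to the PDE: $\partial_tU=\Delta_xU+\partial_{yy}U=\Delta_xU+\partial_yV$; the term $\partial_yV$ is $O(y^{-2+\alpha})$ by the previous step, while writing $\partial_{x_i}^2U(\cdot,\cdot,y)=\partial_{x_i}P_{y/2}\big(\partial_{x_i}U(\cdot,\cdot,y/2)\big)$ and using (a) with the spatial bound just obtained gives $\|\partial_{x_i}^2U(\cdot,\cdot,y)\|_{L^\infty}\le CAy^{-2+\alpha}$. Hence $\|\partial_tU(\cdot,\cdot,y)\|_{L^\infty}\le CAy^{-2+\alpha}$, and all the constants $B_i,C_1$ are bounded by $CA$.

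For $(2)\Rightarrow(1)$ the same machinery is run in reverse, with the extension PDE doing the main work. From $\partial_{x_i}^2U(\cdot,\cdot,y)=\partial_{x_i}P_{y/2}\big(\partial_{x_i}P_{y/2}u\big)$, tool (a), and the hypothesis $\|\partial_{x_i}P_{y/2}u\|_{L^\infty}\le B_i(y/2)^{-1+\alpha}$ one gets $\|\Delta_xU(\cdot,\cdot,y)\|_{L^\infty}\le C\big(\sum_iB_i\big)y^{-2+\alpha}$. The equation then gives $\|\partial_{yy}U(\cdot,\cdot,y)\|_{L^\infty}=\|\partial_tU-\Delta_xU\|_{L^\infty}\le\big(C_1+C\sum_iB_i\big)y^{-2+\alpha}$, and integrating from $y$ to $\infty$, using $\|\partial_yP_ru\|_{L^\infty}\to0$ from (b) and $-2+\alpha<-1$, produces $\|\partial_yU(\cdot,\cdot,y)\|_{L^\infty}\le C\big(C_1+\sum_iB_i\big)y^{-1+\alpha}$, i.e.\ $(1)$ with $A\le C(C_1+\sum_iB_i)$. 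The two quantitative implications together give the claimed equivalence of the infima.

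I expect no conceptual obstacle; the points requiring care are bookkeeping ones: checking that $P_yu$ is a classical solution of the driftless extension equation for merely bounded $u$, and making sure every use of the interior caloric estimate at height $y$ involves only heights comparable to $y$, so the backward parabolic cylinders lie strictly in $\{y>0\}$, together with the observation that $0<\alpha<1$ makes every tail integral $\int_y^\infty r^{-2+\alpha}\,dr$ convergent. All of this rests on Lemma \ref{lema1} and classical interior estimates for the heat equation; no new idea is needed.
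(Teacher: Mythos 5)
Your proof is correct, and it takes a slightly different route from the paper's. Both proofs share the two key moves: (i) the semigroup decomposition $P_y=P_{y/2}\circ P_{y/2}$ combined with the $L^1$ kernel bounds of Lemma \ref{lema1}(ii), to trade one derivative of $P_yu$ for a factor $y^{-1}$, and (ii) the extension PDE $\partial_tP_yu=\Delta_xP_yu+\partial_{yy}P_yu$, to pass information between the $t$--derivative on one hand and the $x$-- and $y$--derivatives on the other. Where you differ is in the first step of $(1)\Rightarrow(2)$: the paper stays entirely with explicit kernel identities, writing $\partial^2_{y,x_i}P_yu$ as a convolution of $\partial_{x_i}P_{y/2}$ with $\partial_yP_{y'}u|_{y'=y/2}$ and then bounding the kernel in $L^1$, whereas you observe that $V=\partial_yP_yu$ is itself caloric in $(t,x,y)\in\{y>0\}$ and invoke the interior gradient estimate for the heat equation on cylinders of radius $\sim y$, which stay in $\{y>0\}$. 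This is a slightly more PDE-theoretic device, but it gains exactly the same factor of $y^{-1}$ per derivative, and your bookkeeping (the height window $[3y/4,5y/4]$, the decay at $y\to\infty$ justifying the integration from $y$ to $\infty$, the role of $-2+\alpha<-1$) is all in order. The paper's version has the marginal advantage of being purely computational via the explicit Poisson kernel, and it records one extra consequence, $\|\partial_{x_i}^2P_yu\|_\infty\lesssim y^{-2+\alpha}$, which you also obtain but via your tool (a). One more small difference: you spell out both implications, whereas the paper proves $(1)\Rightarrow(2)$ and declares the converse a parallel argument; your sketch of $(2)\Rightarrow(1)$ is in fact exactly the parallel argument the authors have in mind.
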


\begin{proof}
Let us prove that (1) implies (2).
The converse can be done with a parallel argument, so the details in that case are left to the interested reader.
By using the semigroup property we have
$$\frac{\partial^2 P_yu(t,x)}{\partial y\,\partial x_i} = 
\bigg(\frac{\partial P_{y/2}}{\partial x_i}\bigg)\ast\bigg(\frac{\partial P_yu(t,x)}{\partial y}\bigg)\bigg|_{y/2},$$
\begin{equation}\label{semigrupo3}
\frac{\partial^2 P_yu(t,x)}{\partial x^2_i} = 
\bigg(\frac{\partial P_{y/2}}{\partial x_i}\bigg)\ast\bigg(\frac{\partial P_yu(t,x)}{\partial x_i }\bigg)\bigg|_{y/2},
\end{equation}
and
$$\frac{\partial^2 P_yu(t,x)}{ \partial y^2} = 
\bigg(\frac{\partial P_{y/2}}{ \partial y}\bigg)\ast\bigg(\frac{\partial P_yu(t,x)}{\partial y}\bigg)\bigg|_{y/2}.$$
Hence if we assume that $\|\partial_yP_yu\|_{L^\infty(\R^{n+1})}\le Ay^{-1+\alpha}$ then
by (ii) in Lemma \ref{lema1},
\begin{equation}\label{derseg}
\bigg\|\,\frac{\partial^2 P_yu}{\partial y^2}\bigg\|_{L^\infty(\R^{n+1})}\le Cy^{-2+\alpha},
\quad\hbox{and}\quad
\bigg\|\frac{\partial^2 P_yu}{\partial y\,\partial x_i}\bigg\|_{L^\infty(\R^{n+1})}\le Cy^{-2+\alpha}.
\end{equation}
Moreover,
$$\|\partial_{x_i}P_yu(t,x)\|_{L^\infty(\R^{n+1})}\le 
\|\partial_{x_i}P_y\|_{L^1(\R^{n+1})}\|u\|_{L^\infty(\R^{n+1})} \le 
\frac{C}{y}\|u\|_{L^\infty(\R^{n+1})}.$$
Then  $\displaystyle \partial_{x_i}P_yu(t,x)  \rightarrow 0$, as $y\rightarrow \infty$.
Therefore
$$\partial_{x_i}P_yu(t,x)=-\int_y^\infty \frac{\partial^2 P_{y'} u(t,x)}{\partial y\,\partial x_i} dy'.$$
By using the second estimate in \eqref{derseg} we get 
$$\|\partial_{x_i}P_yu\|_{L^\infty(\R^{n+1})}\le Cy^{-1+\alpha}.$$
Analogously, by \eqref{semigrupo3} we can get
$$\bigg\|\frac{\partial^2}{\partial x^2_i}P_yu\bigg\|_{L^\infty(\R^{n+1})}\le Cy^{-2+\alpha}.$$
As $P_yu(t,x)$ satisfies the equation
$$\partial^2_yP_yu(t,x) = \partial_t P_yu(t,x)-\sum_{i=1}^n\frac{\partial^2}{\partial x_i^2}P_yu(t,x),$$
we readily see from \eqref{derseg} and the previous estimate that $\|\partial_tP_yu\|_{L^\infty(\R^{n+1})}\le Cy^{-2+\alpha}$.
\end{proof}

\begin{proof}[Proof of Theorem \ref{caract}]
Assume that $u\in C_{t,x}^{\alpha/2,\alpha}(\R^{n+1})$. By using (i) in Lemma \ref{lema1} we can write 
$$\partial_yP_yu(t,x)= 
\int_{\mathbb{R}^{n+1}}\partial_yP_y(\tau,z)\big(u(t-\tau,x-z)-u(t,x)\big)\,dz\,d\tau,$$
for every $(t,x)\in\R^{n+1}$. Therefore, by using \eqref{formula2} and the regularity of $u$,
\begin{align*}
\|\partial_yP_yu(t,x)\|_{L^\infty(\R^{n+1})} &\le C\|u\|_{C^{\alpha/2,\alpha}_{t,x}}
\int_0^\infty\int_{\R^n}\frac{e^{-(y^2+|z|^2)/(c\tau)}}{\tau^{(n+3)/2}}(\tau^{1/2}+|z|)^\alpha\,dz\,d\tau \\ 
 &= C\|u\|_{C^{\alpha/2,\alpha}_{t,x}}y^{-1+\alpha}.
\end{align*}
Hence $u\in\Lambda^\alpha$. For the converse, suppose that $u\in\Lambda^\alpha$ and write
\begin{multline*}
u(t+\tau,x+z)-u(t,x) \\
=\big(P_yu(t+\tau,x+z)-P_yu(t,x)\big)+
\big(u(t+\tau,x+z)-P_yu(t+\tau,x+z)\big)+\big(P_yu(t,x)-u(t,x)\big).
\end{multline*}
Next we set $y=|\tau|^{1/2}+|z|$. Then, by the Mean Value Theorem and Lemma \ref{lema2}, for some $0<\theta<1$,
\begin{align*}
|P_yu(t+\tau,x+z)-P_yu(t,x)| &\le |\partial_tP_yu(t+\theta\tau,x+\theta z)||\tau|+|\nabla_xP_yu(t+\theta\tau,x+\theta z)||z| \\
&\le  C(|\tau|^{1/2}+|z|)^{-2+\alpha}|\tau|+C(|\tau|^{1/2}+|z|)^{-1+\alpha}|z| \\
&\le C (|\tau|^{1/2}+|z|)^{\alpha}. 
\end{align*}  
On the other hand,   
\begin{align*}
|u(t+\tau,x+z)-P_yu(t+\tau,x+z)| &= \bigg|-\int_0^y \frac{\partial P_{y'}u(t+\tau,x+z)}{\partial y'}\,dy'\bigg| \\
&\le  \int_0^y\bigg\|\frac{\partial P_{y'}u(t+\tau,x+z)}{\partial y'}\bigg\|_{L^\infty(\R^{n+1})}\,dy' \\
&\le  C\int_0^y{y'}^{-1+\alpha}\,dy'=Cy^\alpha=C(\tau^{1/2}+|z|)^\alpha.
\end{align*}
A similar estimate can be made in the third summand above. We conclude that $u\in C^{\alpha/2,\alpha}_{t,x}(\R^{n+1})$.
\end{proof}
 
In order to prove Theorem \ref{prop:2} we need a preliminary result.

\begin{lem}
Let $u\in L^\infty(\mathbb{R}^{n+1})$ and $\alpha>0$. Let $k$ and $l$ 
be two integers greater than $\alpha$. Then the two conditions
$$\bigg\|\frac{\partial^kP_yu(t,x)}{\partial y^k}\bigg\|_{L^\infty(\R^{n+1})}\le A_ky^{-k +\alpha},\quad\hbox{and}\quad
\bigg\|\frac{\partial^lP_yu(t,x)}{\partial y^l}\bigg\|_{L^\infty(\R^{n+1})}\le A_ly^{-l +\alpha},$$
for $y>0$, where $A_k$ and $A_l$ are some positive constants, are equivalent.
\end{lem}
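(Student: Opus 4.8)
The plan is to prove two one-step implications and then chain them. Call the displayed estimate with $m$ derivatives the \emph{$m$-estimate}. I would show: (a) for every integer $m\ge1$, the $m$-estimate implies the $(m+1)$-estimate; and (b) for every integer $m>\alpha$, the $(m+1)$-estimate implies the $m$-estimate. Granting these, if $k<l$ (the case $k>l$ being symmetric and $k=l$ trivial), repeated use of (a) from $m=k$ up to $m=l-1$ produces the $l$-estimate from the $k$-estimate, while repeated use of (b) from $m=l-1$ down to $m=k$ — all legitimate since every such $m\ge k>\alpha$ — produces the $k$-estimate from the $l$-estimate. (Tracking constants along the chain also gives comparability of the optimal constants, although only the biconditional is asked for.)

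Before that I would record an auxiliary fact extending Lemma~\ref{lema1}~(ii): $\|\partial_y^jP_y\|_{L^1(\R^{n+1})}\le C_jy^{-j}$ for every integer $j\ge1$, proved by induction on $j$. The base case $j=1$ is Lemma~\ref{lema1}~(ii). For the step, I would use the kernel form of the semigroup identity $P_{y_1}(P_{y_2}u)=P_{y_1+y_2}u$, namely $P_{s+t}=P_s\ast P_t$ (convolution over $\R^{n+1}$), together with the fact that $P_{s+t}$ depends on $(s,t)$ only through $s+t$, so that
$$\frac{\partial^j}{\partial y^j}P_y=\partial_s\,\partial_t^{j-1}\big[P_{s+t}\big]\Big|_{s=t=y/2}=\Big(\frac{\partial}{\partial w}P_w\Big)\Big|_{w=y/2}\ast\Big(\frac{\partial^{j-1}}{\partial w^{j-1}}P_w\Big)\Big|_{w=y/2}.$$
Young's inequality and the inductive hypothesis then give $\|\partial_y^jP_y\|_{L^1}\le\frac{C}{y/2}\cdot\frac{C_{j-1}}{(y/2)^{j-1}}=C_jy^{-j}$. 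A consequence I shall use is that $\|\partial_y^jP_yu\|_{L^\infty(\R^{n+1})}\le C_jy^{-j}\|u\|_{L^\infty(\R^{n+1})}\to0$ as $y\to\infty$, for each $j\ge1$.

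For (a), assuming $\|\partial_y^mP_yu\|_{L^\infty}\le A_my^{-m+\alpha}$, I would split one derivative off with the same identity,
$$\frac{\partial^{m+1}}{\partial y^{m+1}}P_yu=\Big(\frac{\partial}{\partial w}P_w\Big)\Big|_{w=y/2}\ast\Big(\frac{\partial^{m}}{\partial w^{m}}P_wu\Big)\Big|_{w=y/2},$$
and conclude, via Lemma~\ref{lema1}~(ii), that $\|\partial_y^{m+1}P_yu\|_{L^\infty}\le\frac{C}{y}A_m(y/2)^{-m+\alpha}=C'A_my^{-(m+1)+\alpha}$. For (b), with $m>\alpha$ and $\|\partial_y^{m+1}P_yu\|_{L^\infty}\le A_{m+1}y^{-(m+1)+\alpha}$, I note that $y\mapsto\partial_y^mP_yu$ is smooth on $(0,\infty)$ with derivative $\partial_y^{m+1}P_yu$ and tends to $0$ at infinity by the auxiliary fact, so integrating,
$$\frac{\partial^m}{\partial y^m}P_yu=-\int_y^\infty\frac{\partial^{m+1}}{\partial r^{m+1}}P_ru\,dr,$$
the integral being absolutely convergent precisely because $(m+1)-\alpha>1$; hence $\|\partial_y^mP_yu\|_{L^\infty}\le A_{m+1}\int_y^\infty r^{-(m+1)+\alpha}\,dr=\frac{A_{m+1}}{m-\alpha}y^{-m+\alpha}$. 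The only delicate points are the bookkeeping behind ``distribute $\partial_y^j$ across $P_{s+t}=P_s\ast P_t$ and evaluate at $s=t=y/2$'' and, conceptually, the observation that the tail integral in (b) converges if and only if $m>\alpha$ — which is exactly why the statement must require $k$ and $l$ to be integers \emph{strictly} larger than $\alpha$.
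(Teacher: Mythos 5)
Your proposal is correct and follows essentially the same approach as the paper's (very terse) proof, which simply invokes the semigroup-convolution reasoning of Lemma~\ref{lema2} to pass from $k$ to $k+1$ and ``an integration device'' for the converse; you have spelled out exactly those two steps, including the crucial observation that the tail integral converges precisely because the integer from which one integrates down exceeds $\alpha$.
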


\begin{proof}
The reasoning of Lemma \ref{lema2} drives to the fact that the bound condition for
the integer $k$ implies the condition for $l=k+1$.
An integration device gives the converse.
\end{proof}

\begin{proof}[Proof of Theorem \ref{prop:2} (1)]
We first obtain some estimates for the second derivatives of the kernel $P_y(\tau,z)$.
It is not difficult to check that for some constant $c>0$,
\begin{equation}\label{segunPoisson}
\bigg|\frac{\partial^2P_y(\tau,z)}{\partial y^2}\bigg|\le C\frac{e^{-(y^2+|z|^2)/(c\tau)}}{\tau^{(n+4)/2}}\chi_{(0,\infty)}(\tau),
\end{equation}
for every $(\tau,z)\in\R^{n+1}$, $y>0$. Therefore
\begin{equation}\label{3Poisson}
\int_{\R^{n+1}} \bigg|\frac{\partial^2P_y(\tau,z)}{\partial y^2}\bigg|\,dz\,d\tau\le \frac{C}{y^2}.
\end{equation}
Then, by using (ii) in Lemma \ref{lema1}, we get  
\begin{equation}\label{4Poisson}
\int_{\R^{n+1}} \frac{\partial^2P_y(\tau,z)}{\partial y^2}\,dz\,d\tau=0,\quad\hbox{for every}~y>0.
\end{equation}
Also, from \eqref{segunPoisson} we have 
\begin{equation}\label{6Poisson}
\bigg|\frac{\partial^2P_y(\tau,z)}{\partial y^2}\bigg|\le 
C\frac{e^{-y^2/(c\tau)}}{\tau^{(n+4)/2}}\le\frac{C}{y^{n+4}},
\end{equation}
and 
\begin{equation}\label{5Poisson}
\bigg|\frac{\partial^2P_y(\tau,z)}{\partial y^2}\bigg| \le 
C\frac{e^{-|z|^2/(c\tau)}}{\tau^{(n+4)/2}}\le\frac{C}{(\tau^{1/2}+|z|)^{n+4}},
\end{equation}
for $\tau>0$. We finally notice that
\begin{equation}\label{simetria}
\frac{\partial^2P_y(\tau,z)}{\partial y^2}=\frac{\partial^2P_y(\tau,-z)}{\partial y^2}.
\end{equation}

Now we are in position to prove part (1) in Theorem \ref{prop:2}. Suppose first that $u$
satisfies condition \eqref{Lamda} with $0<\alpha <2$ on the incremental quotients. Then from \eqref{4Poisson} and \eqref{simetria}
it is possible to write
$$\frac{\partial^2P_yu(t,x)}{\partial y^2}= \frac12\int_{\mathbb{R}^{n+1}} 
\frac{\partial^2P_y(\tau,z)}{\partial y^2}\big(u(t-\tau,x+z)+u(t-\tau,x-z)-2u(t,x)\big)\,dz\,d\tau.$$
Hence, by using \eqref{6Poisson}, \eqref{5Poisson} and the hypothesis on $u$, 
\begin{align*}
\bigg\|\frac{\partial^2P_yu}{\partial y^2}\bigg\|_{L^\infty(\R^{n+1})}
&\le \frac{C}{y^{n+4}}\int_{\tau^{1/2}+|z|<y}(\tau^{1/2}+|z|)^\alpha\,dz\,d\tau 
+ C\int_{\tau^{1/2}+|z|\geq y}(\tau^{1/2}+|z|)^{\alpha-n-4}\,dz\,d\tau \\
&\le Cy^{-2+\alpha}
 \end{align*}
The last estimate for the first integral above follows by using the change of variables $\rho=\tau^{1/2}$:
\begin{align*}
\int_{\tau^{1/2}+|z|<y}(\tau^{1/2}+|z|)^\alpha\,dz\,d\tau &= 
 2\int_{\rho+|z|<y}\rho(\rho+|z|)^\alpha\,dz\,d\rho \\
 &\le 2\int_{\rho+|z|<y}(\rho+|z|)^{\alpha+1}\,dz\,d\rho\le Cy^{\alpha+n+2}.
 \end{align*}
A parallel reasoning works for the second integral. Thus $u\in\Lambda^\alpha$.

Now we shall see the converse. That is, assume that $u\in\Lambda^\alpha$. We want to prove that
condition \eqref{Lamda} holds. As $u\in\Lambda^\alpha$ then $u\in\Lambda^{\alpha'}=C^{\alpha'/2,\alpha'}_{t,x}(\R^{n+1})$
for some $\alpha' <1$. Remember that we have the uniform convergence
$\|P_yu-u\|_{L^\infty(\R^{n+1})}\to0$, as $y\to0^+$, see \eqref{uniform}.
Moreover, by Theorem \ref{caract},
$$\|y\partial_yP_yu\|_{L^\infty(\R^{n+1})}\to0,\quad\hbox{as}~y\to0^+.$$
By using these last two properties we can readily derive the identity 
$$u(t,x) =\int_0^yy'\frac{\partial^2P_{y'}u(t,x)}{(\partial y')^2}\,dy'-y\partial_yP_yu(t,x)+P_yu(t,x).$$
To prove \eqref{Lamda} we use the identity above. Hence we are reduced to estimate
second order incremental quotients of $P_yu(t,x)$ and its first and second derivatives in $y$.
We will show the computation for the case of $g(t,x)=P_yu(t,x)$, the other cases follow the same path.
For $0< \theta,\lambda <1$, $-1< \nu < 1,$  we can write 
\begin{align*}
g&(t-\tau,x+z)+ g(t-\tau,x-z) -2g(t,x)\\
&=\Big[\nabla_xg(t-\theta\tau,x+\theta z)-\nabla_xg(t-\lambda\tau,x-\lambda z)\Big]\cdot z
-\Big[ \partial_tg(t-\theta\tau,x+\theta z)+\partial_tg(t-\lambda\tau,x+\lambda z)\Big]\tau \\
&\leq |D^2_xg(t-\nu\tau,x+\nu z)|(\theta+\lambda)|z|^2 +
|D^2_{t,x}g(t-\nu\tau,x+\nu z)|(\theta+\lambda)|z|\tau \\
&\quad+|\partial_tg(t-\theta\tau,x+\theta z)|\tau+|\partial_tg(t-\lambda\tau,x+\lambda z)|\tau.
\end{align*}
Now, by using the reasonings of Lemma \ref{lema2} we have for $y=\tau^{1/2}+|z|$,
\begin{multline*}
|g(t-\tau,x+z)+g(t-\tau,x-z)-2g(t,x)|
\le C\Big[y^{\alpha-2}|z|^2+y^{\alpha-3}|z|\tau+y^{\alpha-2}\tau\Big] \\
\le C\Big[y^{\alpha-2}(\tau^{1/2}+|z|)^2+y^{\alpha-3}(\tau^{1/2}+|z|)^3+y^{\alpha-2}(\tau^{1/2}+|z|)^2\Big]
\le (\tau^{1/2}+|z|)^\alpha.
\end{multline*}
\end{proof}

\begin{proof}[Proof of Theorem \ref{prop:2} (2)]
By choosing $\tau=0$ and $z=0$ in \eqref{Lamda} we get that
$u(\cdot,t)\in\Lambda_x^\alpha(\R^n)$ uniformly in $t$ and $u(x,\cdot)\in\Lambda_t^{\alpha/2}(\R)$
uniformly in $x$. For the converse observe that, as $\alpha/2 <1$, by 
\cite[Proposition~9,~p.~147]{Stein} we have 
\begin{align*}
|u&(t-\tau,x+z)+u(t-\tau,x-z)-2u(t,x)| \\
&\le |u(t-\tau,x+z)+u(t-\tau,x-z)-2u(t-\tau,x)|+|2u(t-\tau,x)-2u(t,x)| \\
&\le \|u(t-\tau,\cdot)\|_{\Lambda_x^\alpha(\R^n)}|z|^\alpha+
2\|u(\cdot,x)\|_{\Lambda_t^{\alpha/2}(\R)}\tau^{\alpha/2}\le C (|z|+\tau^{1/2})^\alpha.
\end{align*}
\end{proof}

\begin{proof}[Proof of Theorem \ref{prop:2} (3)] 
Assume that $2<\alpha\le4$. We have
$$\|\partial^5_yP_yu\|_{L^\infty(\R^{n+1})}\le Cy^{-5+\alpha},$$
and we get 
$$\bigg\|\frac{\partial^5P_yu}{\partial y^3\,\partial x_i^2}\bigg\|_{L^\infty(\R^{n+1})}\le Cy^{-5+\alpha}.$$
Then 
$$ \frac{\partial^4 P_yu}{\partial y^2\,\partial x_i^2}=\int_y^1\frac{\partial^5P_{y'}u}{\partial{y'}^3\,\partial x_i^2}\,dy'
+\frac{\partial^4 P_yu}{\partial y^2\partial x_i^2}\bigg|_{y=1}.$$
Hence, for small $y$ we have 
$$\bigg\|\frac{\partial^4 P_yu}{\partial y^2\,\partial x_i^2}\bigg\|_{L^\infty(\R^{n+1})}
\le C\int_y^1(y')^{-5+\alpha}\,dy'+C \le Cy^{-4+\alpha}.$$
Two more integrations give that $\partial_{x_i}^2P_yu(t,x)$ is a Cauchy sequence in 
$L^\infty(\R^{n+1})$. Then the uniform limit as $y\to0^+$ is exactly $\partial^2_{x_i}u(t,x)$.
As $\|P_y\|_{L^\infty(\R^{n+1})}\leq C$, we have that
$$P_{y'}\frac{\partial^2P_yu(t,x)}{\partial x_i^2}\rightarrow P_{y'}\frac{\partial^2  u(t,x) }{\partial x_i^2},\quad\hbox{as}~y\to0.$$
But 
$$P_{y'}\frac{\partial^2 P_yu(t,x) }{\partial x_i^2}=  \frac{\partial^2 P_yP_{y'}u(t,x) }{\partial x_i^2}.$$
Hence in the limit,
$$P_{y'}\frac{\partial^2u(t,x)}{\partial x_i^2}=\frac{\partial^2 P_{y'}u(t,x)}{\partial x_i^2}.$$
Then 
$$\Bigg\| \frac{\partial^2 P_{y'}(\frac{\partial^2u(t,x)}{\partial x_i^2})}{(\partial y')^2}\bigg\|=
\bigg\|\frac{\partial^4P_{y'}u(t,x)}{(\partial y')^2\,\partial x_i^2}\bigg\| \le(y')^{-4+\alpha}.$$
That is $\partial^2_{x_i}u(t,x)\in\Lambda^{\alpha-2}$. For the derivative with respect to $t$
we start in an analogous way,
then we get
$$\bigg\|\frac{\partial^5 P_yu}{\partial y^4 \partial t}\bigg\|_{L^\infty(\R^{n+1})}\le Cy^{-6+\alpha}.$$
We continue with the obvious changes. Details are left to the interested reader.
\end{proof}

\begin{proof}[Proof of Theorem \ref{prop:2} (4)] 
The statements in $(i)$--$(v)$ follow from the previous ones. Finally for $(vi)$,
which is the case $\alpha=2$, we proceed as in the proof of part
\textit{(1)} but starting from $\displaystyle\frac{\partial^3 P_y u(t,x)}{\partial y^3}$.
\end{proof}

\medskip

\noindent\textbf{Acknowledgements.} We would like to thank Luis A. Caffarelli for pointing out
the possibility of proving the boundary Harnack inequality for fractional caloric functions,
 {and to Emmanouil Milakis and Xavier Ros-Ot\'on for their interest in our results
and methods and for providing several useful comments and references.
We are grateful to the referees for their substantial suggestions and careful reading
that helped us to improve the presentation of the paper.}



\end{document}